\newtheorem{theorem}{Theorem}[section]
\newtheorem{question}[theorem]{Question}
\newtheorem{lemma}[theorem]{Lemma}
\newtheorem{prop}[theorem]{Proposition}
\newtheorem{cor}[theorem]{Corollary}
\newtheorem{claim}[theorem]{Claim}
\newtheorem{example}[theorem]{Example}
\newtheorem{proposition}[theorem]{Proposition}
\theoremstyle{definition}
\newtheorem{definition}[theorem]{Definition}
\newtheorem{remark}[theorem]{Remark}
\newcommand{\zz}{(\mathbb{Z}\slash2\mathbb{Z}) *(\mathbb{Z}\slash2\mathbb{Z})}
\DeclareMathOperator{\supp}{supp}
\numberwithin{equation}{section}
\begin{document}
\title{Perfect matchings in hyperfinite graphings}

\author{Matthew Bowen}

\address{Department of Mathematics and Statistics, McGill University, 805 Sherbrooke St W., H3A 0B9  Montreal, Canada}

\email{ matthew.bowen2@mail.mcgill.ca}
\author{G\'abor Kun}

\address{Alfr\'ed R\'enyi Institute of Mathematics, H-1053 Budapest, Re\'altanoda u. 13-15., Hungary}
\address{Institute of Mathematics, E\"otv\"os L\'or\'and University, P\'azm\'any P\'eter s\'et\'any 1/c, H-1117 Budapest, Hungary}

\email{kungabor@renyi.hu}

\author{Marcin Sabok}

\address{Department of Mathematics and Statistics, McGill University, 805 Sherbrooke St W., H3A 0B9  Montreal, Canada}
\email{marcin.sabok@mcgill.ca}

\thanks{The first and the third authors are partly funded by the NSERC Discovery Grant  RGPIN-2020-05445, NSERC Discovery Accelerator
Supplement  RGPAS-2020-00097 and NCN Grant Harmonia  2018/30/M/ST1/00668. The second author's work on the project leading to this application has received funding from the European Research Council (ERC) under the European Union's Horizon 2020 research and innovation programme (grant agreement No. 741420), from the \'UNKP-20-5 New National Excellence Program of the Ministry of Innovation and Technology from the source of the National Research, Development and Innovation Fund and from the J\'anos Bolyai Scholarship of the Hungarian Academy of Sciences.}


\begin{abstract}

We characterize hyperfinite bipartite graphings that admit measurable perfect matchings. In particular, we prove that every regular hyperfinite bipartite graphing admits a measurable perfect matching if it is one-ended or the degree is odd. We give several applications of this result, answering various open questions in the field.
For instance, we extend the Lyons--Nazarov theorem by characterizing bipartite Cayley graphs which admit a factor of iid perfect matching, 
answering the bipartite case of a well-known question of Lyons and Nazarov 
popularized by Kechris and Marks
.
Moreover, we show how our results apply to measurable equidecompositions and, in particular,  generalize the recent result of Grabowski, M\'ath\'e and Pikhurko on the measurable circle squaring. Our approach applies more generally to rounding measurable perfect fractional matchings. 
\end{abstract}
\maketitle


\section{Introduction}

Measurable 
combinatorics is concerned with
combinatorial problems arising in the setting of probability
measure preserving (pmp) Borel actions of
finitely generated groups, and more generally
\textit{graphings}, i.e., Borel pmp graphs on standard
Borel spaces (see the textbook of Lov\'asz \cite{lovasz}).  In this paper we study
hyperfinite graphings (see the work of Elek \cite{elek} and Schramm \cite{schramm}) and, in particular,
pmp actions of finitely generated amenable groups.

 Measurable perfect matchings have received a considerable
amount of attention recently (see for instance the
survey of Kechris and Marks \cite{kechris.marks}). In this paper we work with \textit{bipartite} graphs, i.e. graphs which admit no odd cycles. 
A \textit{perfect fractional matching} on a graph $(V,E)$ is a function $\varphi:E\to[0,1]$ such that for every vertex $v$ we have $\sum_{v\in e} \varphi(e)=1$.
Note that every $d$-regular graph admits a perfect fractional matching given by the constant function $\varphi(e)=\frac{1}{d}$ . For finite bipartite graphs, both the existence of a perfect matching and the existence of a perfect fractional matching are equivalent to Hall's condition.
In the measurable context, this is not true.
This can be deduced from the Banach--Tarski paradox \cite{banach-tarski},
which gives a graphing 
that does not admit a measurable perfect fractional
matching (for more on this topic see \cite{kechris.marks}). Even in the hyperfinite
case, Laczkovich \cite{lacz3} gave an example of a $2$-regular 
acyclic graphing that admits  no  measurable  perfect  matching. 
 

Recall that in a hyperfinite graphing, every infinite
connected component has either one or two ends (see the paper of Adams
\cite{adams} or the textbook of Kechris and Miller \cite[Theorem 22.3]{kechris.miller}). 
Note that a hyperfinite graphing is a.e. one-ended if
and only if the growth of balls is a.e. superlinear (see
Proposition \ref{equi}).

For hyperfinite bipartite graphings the existence of a perfect matching is equivalent to 
the existence of a measurable perfect fractional matching (see Lemma \ref{lemat}). The example of Laczkovich \cite{lacz3} shows that in the two-ended case, this does not imply the existence of a measurable perfect matching.  As it turns out, even in the one-ended  bipartite case the existence of a measurable perfect fractional matching does not imply the existence of a measurable perfect matching (see Example \ref{example}). Our main result states that the existence of a measurable perfect matching follows in the one-ended  bipartite case when a perfect fractional matching is a.e. positive.

\begin{theorem}\label{matching}
Let $G$ be a hyperfinite one-ended bipartite graphing. If $G$ admits a measurable perfect fractional matching which is everywhere positive, then $G$ admits a measurable perfect matching.
\end{theorem}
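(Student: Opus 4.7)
My plan is to upgrade $\varphi$ to an integral perfect matching by combining the hyperfiniteness and one-endedness of $G$ with a finite-to-infinite Hall's theorem argument, using the everywhere-positivity of $\varphi$ to absorb boundary error at each finite stage.

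First, using hyperfiniteness and one-endedness I would construct a Borel \emph{one-ended toast} on $G$: a nested Borel exhaustion of each connected component by finite connected subgraphs $(F_n(x))_{n\geq 0}$ whose complements within the ambient components are also connected. Such toasts are a standard structural tool in this area and they ensure both that every vertex eventually lies in the deep interior of some $F_n(x)$ and that each tile has a unique ``outward'' direction. On each finite $F_n(x)$, the restriction $\varphi|_{F_n(x)}$ witnesses Hall's condition for matching the interior vertices: if $S$ lies on one side of the bipartition and consists entirely of interior vertices of $F_n(x)$, then $N(S)\subseteq F_n(x)$, and summing $\varphi$-values over edges incident to $S$ yields $|S|=\sum_{e\cap S\neq\emptyset}\varphi(e)\leq |N_{F_n(x)}(S)|$.

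Next, I would inductively construct Borel matchings $M_n$ with $M_n\subseteq M_{n+1}$ and with $M_n$ saturating every interior vertex of $F_n(x)$. Passing from $M_{n-1}$ to $M_n$ is a finite matching-extension problem on the bipartite graph $F_n(x)$: one must extend $M_{n-1}$ to cover the new interior vertices while respecting edges already committed at the previous level. By K\"onig--Hall this reduces to verifying a residual Hall condition on the unmatched vertices of $F_n(x)$; the fractional matching $\varphi$ is again the natural candidate witness, and the everywhere positivity of $\varphi$ is what guarantees that enough slack survives the boundary commitments carried over from $M_{n-1}$. One-endedness enters crucially here: any residual defect in $F_n(x)\setminus F_{n-1}(x)$ that cannot be absorbed locally can be routed through the unique infinite tail of the component, exploiting the fact that the complement of each tile in its component is still connected.

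The main obstacle is precisely the verification of the residual Hall's condition at each inductive step, making sure that the boundary commitments from $M_{n-1}$ do not destroy the slack afforded by $\varphi$; this is where the interplay between strict positivity (yielding Hall's slack) and one-endedness (yielding a unique outward direction in which to push defects) is essential. Once this step is secured, the union $M=\bigcup_n M_n$ is a Borel perfect matching, since every vertex lies in the interior of $F_n(x)$ for all sufficiently large $n$ and is therefore matched by $M_N$ for some finite $N$.
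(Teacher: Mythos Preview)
There is a genuine gap at the inductive extension step. To pass from $M_{n-1}$ to $M_n$ with $M_{n-1}\subseteq M_n$ and $M_n$ saturating every interior vertex of $F_n(x)$, you must verify Hall's condition in the \emph{residual} bipartite graph on the unmatched vertices: for every set $S$ of unmatched interior vertices on one side you need $|N(S)\setminus V(M_{n-1})|\ge|S|$. But summing $\varphi$ over the edges leaving $S$ only gives $|N(S)|\ge|S|$; nothing stops almost all of that $\varphi$-mass from landing on vertices already saturated by $M_{n-1}$. Everywhere-positivity of $\varphi$ furnishes no uniform lower bound and hence no quantitative slack to absorb prior commitments, and the appeal to ``routing residual defect through the unique infinite tail'' is not an argument: deciding which boundary vertices of $F_n(x)$ to consume at stage $n$, and on which side of the bipartition, \emph{is} the matching problem you are trying to solve. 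Nothing in your outline prevents the greedy construction from painting itself into a corner at some finite stage, and once $M_{n-1}$ has made a bad local choice there is no mechanism in your scheme to undo it.

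For contrast, the paper's proof is organised completely differently and never builds an increasing chain of partial matchings. It works inside the polytope $P_f$ of measurable perfect fractional matchings, shows that every extreme point is $\{0,\tfrac12,1\}$-valued with the $\tfrac12$-edges forming vertex-disjoint bi-infinite lines $L(\chi)$, and uses the connected toast not to extend matchings but to cover $L(\chi)$ by many edge-disjoint families of cycles, each meeting $E(G)\setminus L(\chi)$ at most once. Adding random alternating $\pm\varepsilon$-circuits along these cycles and re-extremising (via Choquet--Bishop--de Leeuw) strictly decreases $\mu(L(\chi))$ in expectation by a second-moment calculation; a transfinite iteration then drives $\mu(L(\chi))$ to zero. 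Positivity of $\varphi$ enters only to ensure $\supp(\varphi)=E(G)$, so that the one-endedness hypothesis applies to the support and the cycle covers exist.
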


More generally, in Theorem \ref{fmatching} we show that every measurable perfect fractional matching with hyperfinite, one-ended support is in the closure of convex combinations of measurable perfect matchings --- even though, unlike in the finite case, not every extreme point of the set of perfect fractional matchings must be integral. 

Theorem \ref{matching} has been recently applied by Tim\'ar \cite{timar.new} to construct a factor matching of optimal tail between two Poisson point processes in $\mathbb{R}^n$. This provides the optimal decay that a Poisson factor matching can achieve, improves previous results and answers the problem initiated by Holroyd, Pemantle, Peres and Schramm \cite{poisson.matching}. 

As a special case of Theorem \ref{matching}, we get measurable perfect matchings when the graphing is regular.

\begin{cor}\label{regular}
Every regular hyperfinite one-ended bipartite graphing admits a measurable perfect matching.
\end{cor}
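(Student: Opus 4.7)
The plan is to deduce Corollary \ref{regular} as an immediate consequence of Theorem \ref{matching} by exhibiting an explicit everywhere-positive measurable perfect fractional matching. Assuming $G$ is $d$-regular for some fixed positive integer $d$ (i.e., every vertex has degree exactly $d$), I would define $\varphi \colon E \to [0,1]$ by the constant prescription $\varphi(e) = 1/d$. Since $E$ is Borel and $\varphi$ is constant on it, $\varphi$ is trivially measurable; since $1/d > 0$, it is everywhere positive.

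It then remains only to verify the fractional matching condition: for every vertex $v$, we have $\sum_{v \in e} \varphi(e) = \deg(v) \cdot (1/d) = 1$, as observed in the introductory discussion preceding Theorem \ref{matching}. The hypotheses of Theorem \ref{matching} are thus met, and the theorem directly produces the desired measurable perfect matching. There is genuinely no obstacle at this level: the entire content of the corollary is packaged inside Theorem \ref{matching}, which is the hard result.

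If one wished to interpret ``regular'' more liberally, allowing the common degree to vary across connected components, I would decompose $V$ into the countably many Borel pieces $V_d := \{v : \deg(v) = d\}$ (each a union of connected components), restrict $G$ to each $V_d$, apply the above argument componentwise to produce a measurable perfect matching of each restriction, and take the Borel union of these matchings. This extension adds only routine bookkeeping and no substantive combinatorial difficulty.
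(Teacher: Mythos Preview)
Your proposal is correct and matches the paper's own derivation: the paper explicitly notes that Corollary \ref{regular} follows from Theorem \ref{matching} (equivalently, Theorem \ref{fmatching} with $f\equiv 1$) by taking the constant perfect fractional matching $\varphi\equiv\tfrac{1}{d}$. Your added remark about handling a component-varying degree is a harmless bonus not present in the paper.
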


We show that for regular hyperfinite bipartite graphings of odd degree the assumption on one-endedness is not needed. We give a short proof of the following.

\begin{theorem}\label{odd}
Every regular hyperfinite bipartite graphing of odd degree admits a measurable perfect matching.
\end{theorem}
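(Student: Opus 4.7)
The approach is to split $G$ according to the end-structure of its components and reduce to Corollary~\ref{regular} wherever possible. By the classification of ends for hyperfinite graphings due to Adams, the space $X$ decomposes measurably as $X = X_{\mathrm{fin}} \sqcup X_1 \sqcup X_2$, consisting respectively of finite, one-ended, and two-ended components. On $X_{\mathrm{fin}}$ each component is a finite bipartite $d$-regular graph and so has a K\"onig perfect matching, and the finite set of all such matchings in each component allows a measurable selection via any Borel linear order on edges. On $X_1$, Corollary~\ref{regular} applies directly and produces the desired measurable matching. The non-trivial case is $X_2$.

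For the two-ended part the plan is to exploit its $\Z$-like structure: a two-ended hyperfinite Borel equivalence relation is generated by a Borel free $\Z$-action, giving a measurable ``linear'' coordinate along each component. One natural strategy is to find a Borel set of edges $F \subseteq E(G|_{X_2})$, of bounded local size at each vertex, such that (i) removing $F$ cuts every two-ended component into one-ended pieces, and (ii) the residual graph $G|_{X_2}\setminus F$ still carries an everywhere-positive perfect fractional matching. Theorem~\ref{matching} then yields a measurable matching on $G|_{X_2}\setminus F$, and a separately constructed measurable matching on the edges of $F$ (read off from the $\Z$-coordinate) combines with it to give a measurable perfect matching on all of $X_2$.

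The main obstacle is that any such construction on $X_2$ must genuinely use the odd-degree hypothesis, since Laczkovich's two-ended acyclic $2$-regular example admits no measurable perfect matching. The expected mechanism is a mod-$2$ parity argument along the $\Z$-coordinate: for even $d$ the two natural ``directions'' along a two-ended component cannot be canonically disambiguated (this is essentially the Laczkovich obstruction), whereas for odd $d$ the parity of the degree at a Borel cross-section pins down a canonical direction and a Borel orientation from which both the cut $F$ and the matching on $F$ can be extracted. Making this parity-based canonical choice measurable, and verifying that the resulting partial structure extends a.e.\ to a perfect matching, is the crux.
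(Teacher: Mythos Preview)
Your approach is genuinely different from the paper's, and the gap you flag as ``the crux'' is real and not resolved by the mechanism you propose.

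The paper does \emph{not} decompose by ends or invoke Corollary~\ref{regular}; it gives a direct, self-contained argument that works uniformly. Starting from the constant fractional matching $\tau_\emptyset\equiv\frac{1}{d}$, it builds a binary tree of fractional matchings $\tau_s$ ($s\in\{-1,1\}^n$): at each node one finds a near-maximal Borel family of disjoint cycles in $\supp(\tau_s)$ and adds or subtracts an alternating $\frac{1}{d}$-circuit to get $\tau_{s^\frown\pm 1}$. An $L^2$ energy identity plus Borel--Cantelli shows that along a.e.\ branch $t$ the sequence converges to a fractional perfect matching $\tau_t$ with $\supp(\tau_t)$ essentially acyclic; since $\supp(\tau_t)$ has no leaves and the graphing is hyperfinite, it is a.e.\ a disjoint union of bi-infinite lines on which $\tau_t$ alternates between $\frac{k}{d}$ and $\frac{d-k}{d}$. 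Odd $d$ enters only here: these two values are distinct, so exactly one exceeds $\frac{1}{2}$, and selecting those edges gives a measurable perfect matching on the lines (and $\tau_t$ is already integral elsewhere). Note that the odd-degree symmetry-breaking happens on the \emph{support lines of the limiting fractional matching}, not on the two-ended components of $G$ itself.

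Your plan for $X_2$ has two problems. First, the sentence ``for odd $d$ the parity of the degree at a Borel cross-section pins down a canonical direction'' does not name an actual invariant: for a finite cutset $C$ in a $d$-regular bipartite two-ended component there is no parity of any local quantity that distinguishes the two infinite sides of $C$; the Laczkovich obstruction to measurably orienting a two-ended graphing is not removed by odd degree at the level of the abstract $\Z$-structure. What odd $d$ \emph{does} rule out is the value $\frac{1}{2}$ on an extreme fractional matching supported on a line---a different and more delicate use of parity, which is exactly what the paper exploits. Second, even granting a Borel orientation, your reduction is underspecified: after removing an edge set $F$ the residual graph is no longer regular, and you give no reason why it carries an \emph{everywhere positive} perfect fractional matching (the hypothesis of Theorem~\ref{matching}), nor how a matching on $F$ and a matching on $G|_{X_2}\setminus F$ are to be made compatible at the vertices incident to $F$ so that their union is perfect. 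Both points would require substantial additional argument, and the first appears to need an idea you have not yet supplied.
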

This result is optimal, as for every even number $d$ there exist a $d$-regular hyperfinite graphing, (obtained by a modification of  \cite{lacz3}), which does not admit measurable perfect matchings \cite{klopotowski, conley.kechris}. In \cite[Problem 13.6]{kechris.marks} Kechris and Marks asked about the existence of measurable perfect matchings in $3$-regular Borel graphs. While the answer for general bipartite graphings (even in the acyclic case) is negative \cite{gabor.new}, Theorem \ref{odd} shows that in case of hyperfinite bipartite graphings the answer is positive. 

The regular case for graphings of arbitrary degree applies in particular to the bipartite Cayley (or Schreier) graphs
induced by actions of finitely generated groups.
Lyons and Nazarov \cite{lyons.nazarov} proved that if the Cayley graph of a finitely generated non-amenable group is
bipartite then it admits a.s. a factor of iid perfect matching. Gao,
Jackson, Krohne and Seward  showed that for $n \geq 2$ and the standard set of generators of $\mathbb{Z}^d$, the Schreier graph of the 
shift of $\mathbb{Z}^n$ on $\{0,1\}^{\mathbb{Z}^n}$ admits a Borel perfect matching on the free part of the shift (cf. \cite[Theorem 10.2]{kechris.marks}). More generally, finitely generated abelian groups which admit a measurable perfect matching were recently characterized by Weilacher \cite[Theorem 2]{weilacher}.

 In \cite{lyons.nazarov} Lyons and Nazarov asked which Cayley 
graphs admit a.s. a factor of iid perfect matching. The question was popularized by Kechris and Marks in their survey \cite[Question 13.5]{kechris.marks}. We use Corollary \ref{regular} to answer that question for bipartite Cayley graphs, extending the
Lyons--Nazarov theorem. 




\begin{theorem}\label{fiid}
Let $\Gamma$ be a finitely generated group. 
\begin{itemize}
    \item If $\Gamma$ is isomorphic to $\mathbb{Z} \ltimes \Delta$ for a finite normal subgroup $\Delta$ of odd order, then no bipartite Cayley graph of $\Gamma$ admits a factor of iid perfect matching a.s.

    \item Else, if $\Gamma$ is not isomorphic to $\mathbb{Z} \ltimes \Delta$ 
    for a finite normal subgroup $\Delta$ of odd order, then every bipartite Cayley graph of $\Gamma$ admits a factor of iid perfect matching a.s.
\end{itemize}
\end{theorem}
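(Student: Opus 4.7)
The plan is to handle the two directions separately.

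For the positive direction, assume $\Gamma$ is not of the form $\mathbb{Z} \ltimes \Delta$ with $|\Delta|$ odd, and fix a bipartite Cayley graph $G$; we split by amenability. If $\Gamma$ is non-amenable, we apply the Lyons--Nazarov theorem \cite{lyons.nazarov} directly; if $\Gamma$ is finite, Hall's theorem for finite bipartite regular graphs yields a deterministic perfect matching. Otherwise $\Gamma$ is amenable and infinite: we consider the free Bernoulli shift $\Gamma \acts [0,1]^\Gamma$, whose orbit equivalence relation is hyperfinite by amenability, so that the Schreier graph is exactly $G$ realised as a regular bipartite hyperfinite graphing. If $\Gamma$ has one end, then so does the graphing, and Corollary \ref{regular} produces a measurable perfect matching --- equivalently, a factor of iid perfect matching. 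If $\Gamma$ has two ends, so that $\Gamma$ is virtually $\mathbb{Z}$, we split further: if $\deg G$ is odd, invoke Theorem \ref{odd}; if $\deg G$ is even, we appeal to the classification of virtually $\mathbb{Z}$ groups. The maximal finite normal subgroup $F \leq \Gamma$ satisfies $\Gamma/F \cong \mathbb{Z}$ or $\Gamma/F \cong \zz$, and the exclusion of the bad case forces either $|F|$ even or $\Gamma/F \cong \zz$. In either subcase a direct combinatorial construction using involutions in $\Gamma$ (provided by Cauchy's theorem applied to $F$, or by lifts of the involutive generators of the $\zz$ quotient) compatible with the bipartition homomorphism $\chi : \Gamma \to \mathbb{Z}/2$ yields a factor of iid perfect matching of $G$.

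For the negative direction, suppose $\Gamma \cong \mathbb{Z} \ltimes \Delta$ with $|\Delta|$ odd and $G$ is any bipartite Cayley graph with generating set $S$. Let $\pi : \Gamma \to \mathbb{Z}$ be the canonical projection. Since $|\Delta|$ is odd, the bipartition homomorphism $\chi : \Gamma \to \mathbb{Z}/2$ vanishes on $\Delta$ and factors as $\chi = \pi \bmod 2$; hence $\pi(s)$ is odd for every $s \in S$. Because every order-two element of $\Gamma$ has the form $(0, \delta)$ with $\delta^2 = e$, and thus $\pi = 0$, no $s \in S$ is an involution; consequently $S$ decomposes into pairs $\{s, s^{-1}\}$ and $\deg G$ is even. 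A factor of iid perfect matching $M_\omega$ would yield a $\Gamma$-equivariant displacement $\tau_\omega(v) = \pi(M_\omega(v)) - \pi(v)$ taking odd integer values, with $\tau_\omega(M_\omega(v)) = -\tau_\omega(v)$. Projecting to the quotient $\mathbb{Z}$-Bernoulli shift on $[0,1]^{\Gamma/\Delta}$ and adapting the ergodicity argument of Laczkovich \cite{lacz3} to the semidirect product setting, one extracts a measurable $\{0,1\}$-valued factor which must be $T^2$-invariant but flip under the shift $T$ generating the $\mathbb{Z}$-action, contradicting ergodicity of $T^2$ on the Bernoulli shift.

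The main technical obstacle is the even-degree, virtually $\mathbb{Z}$ case of the positive direction, where neither Corollary \ref{regular} nor Theorem \ref{odd} is available off the shelf and one must perform a careful combinatorial construction exploiting the existence of compatible involutions (or suitable $\Gamma$-equivariant pairings) guaranteed by the structural hypothesis. The negative direction is then essentially a lift of Laczkovich's classical obstruction via the projection $\pi$, where $|\Delta|$ odd is used precisely to ensure that $\chi$ factors through $\pi$ and that the $\mathbb{Z}$-level obstruction is not absorbed by the fiber.
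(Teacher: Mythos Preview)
Your high-level case split (non-amenable via Lyons--Nazarov; amenable one-ended via Corollary~\ref{regular}; two-ended via the virtually-$\mathbb{Z}$ structure theorem) matches the paper's route through Proposition~\ref{new}. The $\zz$-quotient subcase is also essentially the paper's argument: some generator must land outside the index-$2$ copy of $\mathbb{Z}$ and hence has even order, so its orbits are even cycles. The differences and the gaps are as follows.

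\textbf{The genuine gap: the $\mathbb{Z}\ltimes F$, $|F|$ even case.} Your proposal here is to invoke Cauchy's theorem to produce an involution in $F$ and then carry out ``a direct combinatorial construction'' compatible with $\chi$. This does not work as stated: an involution in $F$ need not lie in the fixed generating set $S$, so it gives you no edges of the Cayley graph you are trying to match. Nothing about the hypotheses forces $S$ to contain any element of finite order. The paper's Lemma~\ref{HxZ}(ii) handles this case with a substantially more involved block-and-gap construction: one picks a generator $\sigma\in S$ of maximal $\pi$-displacement, builds finite blocks $\Phi\subseteq\Gamma$ on which $\sigma$ itself furnishes a perfect matching, arranges these blocks along a Borel maximal $k$-discrete set of $\Delta$-cosets, and then verifies Hall's condition on the finite gaps by a pigeonhole/periodicity argument (Claims~\ref{blocks} and~\ref{gaps}). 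Your detour through Theorem~\ref{odd} for odd degree is correct and does shave off some cases, but the even-degree residue still requires this full analysis; the involution shortcut is not available.

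\textbf{The negative direction.} Your displacement function $\tau_\omega(v)=\pi(M_\omega(v))-\pi(v)$ is a reasonable starting observation (and your parity argument that every $\pi(s)$ is odd is correct), but the sentence ``one extracts a measurable $\{0,1\}$-valued factor which must be $T^2$-invariant but flip under $T$'' hides exactly the step that needs an idea. The paper does not work with the per-vertex displacement; instead it looks at the \emph{parity of the number of matching edges crossing the cut between $\bigcup_{j\le i}\gamma^j\Delta$ and $\bigcup_{j>i}\gamma^j\Delta$}. Because $|\Delta|$ is odd and every matching edge changes level, this parity must alternate in $i$, producing a measurable perfect matching of the quotient $\mathbb{Z}$-graphing on $V/\Delta$, which is then killed by total ergodicity (Claim~\ref{folklore}). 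Your displacement $\tau_\omega$ does not by itself yield a function on the $\mathbb{Z}$-quotient with the required alternation property; you would need to aggregate over $\Delta$-cosets, and the natural aggregation is precisely the crossing-parity count. Also, the quotient space is $V/\Delta$ with the induced $\mathbb{Z}$-action, not the Bernoulli shift $[0,1]^{\Gamma/\Delta}$; what survives is total ergodicity of the $\mathbb{Z}$-action, inherited from total ergodicity of $\Gamma\curvearrowright[0,1]^\Gamma$.
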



In Section \ref{sec:rounding} we give more general theorems on rounding perfect fractional matchings in hyperfinite graphings. In Corollary \ref{characterization} we  characterize hyperfinite bipartite graphings with measurable perfect matchings by reducing the problem to the two-ended case. In Corollaries \ref{subgraph toth} and \ref{balanced} we also find a regular spanning subgraphing of arbitrary degree less than the degree of the graph, in regular hyperfinite one-ended bipartite graphings, and so-called balanced orientations of one-ended graphings with even degree, answering a question of Bencs, Hru\v{s}kov\'a and T\'oth \cite[Question 6.4]{bht}.


In the proofs  we use particular witnesses to hyperfiniteness in one-ended hyperfinite graphings, which we call a connected toast structure.  This relies on the work of Conley, Gaboriau, Marks, Tucker-Drob \cite{cgmtd} and Tim\'ar \cite{timar} (extending the earlier work of Benjamini, Lyons, Peres, and Schramm from \cite{blps}), who showed that such graphings admit measurable one-ended spanning trees with the same connected components as the original graphing.

Our results on measurable perfect matchings can also be applied to equidecompositions. Recall that in \cite{lacz11} Laczkovich solved the Tarski circle squaring problem and showed
that the unit-area disc and the unit square are equidecomposable
using translations in the plane. Recently, Grabowski,
M\'ath\'e and Pikhurko \cite{gmp} showed that the circle
squaring is possible with measurable pieces, and Marks and
Unger \cite{marks-unger} gave a Borel solution  to the Tarski circle squaring problem (see also \cite{MatheICM} for more on the recent developments in this field. The Borel solution of Marks and Unger \cite{marks-unger} was the first to use a rounding algorithm for Borel flows on Schreier graphs of $\mathbb{Z}^d$.

In Section \ref{sec:mcs}, we show that the measurable circle squaring of Grabowski, M\'ath\'e and Pikhurko can be deduced from our results. 
We give a couple of proofs, which are different from those of Grabowski, M\'ath\'e and Pikhurko \cite{gmp} and of Marks and Unger \cite{marks-unger}. In particular, we do not refer to the discrepancy estimates of Laczkovich. The first proof reduces the circle squaring to find a perfect matching in a regular one-ended hyperfinite graph and uses Laczkovich's results on uniformly spread sets. The second proof relies on the fact that Laczkovich provides two independent sets of translations that witness circle squaring. This proof also shows that 
if two measurable sets admit two equidecompositions by two independent sets of vectors, 
then they are measurably equidecomposable by the union of these two sets of vectors.

\subsection{Future work}
The characterization of (not necessarily hyperfinite) bipartite graphings with measurable perfect matchings seems beyond reach. 
Very recently, the second author \cite{gabor.new} gave an example of a $d$-regular, acyclic, measurably bipartite graphing that admits no measurable perfect matching.
Our paper characterizes the hyperfinite case. The other case that can be handled is the case of bipartite graphings with large expansion, including actions of groups with the Kazhdan Property (T).
This idea plays a crucial role in the papers of Margulis \cite{margulis}, Sullivan \cite{sullivan} and Drinfeld \cite{drinfeld} on the Banach-Ruziewicz problem. 
Lyons and Nazarov \cite{lyons.nazarov} also use expansion in order to prove that non-amenable groups a.s. admit factor of iid perfect matchings. In \cite{gmp1} Grabowski, M\'ath\'e and Pikhurko used expansion to prove that bounded, measurable sets of nonempty interior and equal measure in $\mathbb{R}^n \text{ }(n \geq 3)$ admit a measurable equidecomposition. 
The question of which pairs of compact sets of equal measure in $\mathbb{R}^3$ admit a measurable equidecomposition still seems to be a hard problem to understand; see the work of 
Cie\'sla and Grabowski
\cite{grabowski.ciesla}). 



The non-bipartite case is much more technical, similarly as for finite graphs. Cs\'oka and Lippner \cite{csoka.lippner} proved that every non-amenable Cayley graph a.s. admits a factor of iid perfect matching.
We expect that this holds for amenable one-ended Cayley graphs.

\begin{question}
Does every one-ended Cayley graph admit a factor of iid perfect matching a.s.?  
\end{question}




Cs\'{o}ka, Lippner{\color{blue},} and Pikhurko \cite{clp} showed that bipartite graphings of maximal degree $d$ admit a measurable edge coloring with $d+1$ colors. Greb\'{i}k and Pikhurko \cite{gp} showed that this estimate works in the non-bipartite case, matching the general optimal bound in the finite case. A natural strengthening of Corollary \ref{regular} would be given by a positive answer to the following question.

\begin{question}
Does every $d$-regular one-ended bipartite hyperfinite graphing admit a measurable edge coloring with $d$ colors?
\end{question}

Another interesting question is connected with the measure preserving assumption in our results. In the non-pmp case, Conley and Miller
\cite{conley.miller.pm} showed that hyperfinite acyclic Borel graphs of degree at least $2$ and with no injective rays of degree $2$ on even indices admit measurable perfect matchings. It would be interesting to know if the results of the current paper hold in the non-pmp setting as well. The main difficulty seems to lie in extending the results on the 
existence of a.e. one-ended spanning trees in hyperfinite graphings used in Section \ref{sec:tilings} to the non-pmp case.


\subsection{Organization}

The paper is organized as follows. In Section \ref{sec:notation} we collect the basics and notation, and in Section \ref{sec:rounding} we state our main result, Theorem \ref{fmatching}, in its full generality and discuss some of its applications. In Section \ref{sec:fract-perf-match} we introduce some of the basic tools of our construction, prove Theorem \ref{odd} and discuss the structure of the perfect fractional matching polytope. Section \ref{sec:tilings} introduces another important tool, namely connected toasts, and shows how they can be used in constructing useful families of cycles.  In Section \ref{sec:fract-perf-match} we prove a special case of our main result, which implies Corollary \ref{regular}. Section \ref{one.lined.section} is devoted to the analysis of a complementary special case. Section \ref{section.final} gets the two previous special cases together and proves Theorem \ref{fmatching} in its full generality. In Section \ref{sec:factor-iid-perfect} we apply our results to prove Theorem \ref{fiid},  and in Section \ref{sec:mcs} we show how our result implies the measurable circle squaring.
The Appendix consists of the proofs of Proposition \ref{equi} on equivalent characterizations of a.e. two-ended graphings and of Lemma \ref{locfin} on finding locally finite one-ended hyperfinite subgraphings.

\subsection*{Acknowledgements}
The authors would like to thank Alexander Kechris, L\'aszl\'o Lov\'asz, Russell Lyons, Oleg Pikhurko, Mikael de la Salle and other participants of Damien Gaboriau's groupe de travail \textit{Actions!} in Lyon, as well as \'Ad\'am Tim\'ar for many helpful comments and discussions. 

\section{Notation and basics}\label{sec:notation}

Given a real number $x$, we denote by $\{x\}$ the fractional part of $x$ and by $\lfloor x \rfloor$ the integral part of $x$.

A countable Borel equivalence relation $E$ on a standard probability space $(V,\nu)$ is \textit{probability measure preserving} (pmp) if every partial Borel bijection whose graph is contained in $E$ preserves the measure. A countable Borel equivalence relation $E$ on a standard Borel space is \textit{hyperfinite} if it is an increasing union of Borel equivalence relations with finite equivalence classes. 

A locally countable Borel graph $G$ on a standard probability space $(V,\nu)$ is \textit{probability measure preserving} if the equivalence relation induced by its connected components is pmp. Equivalently, $G$ is pmp if for some (equivalently, any) sequence of Borel involutions $T_n$ with $E(G)=\bigcup_{n\in\mathbb{N}}T_n$, each $T_n$ preserves the measure (see \cite{kechris.marks}).

A locally countable Borel graph $G$ on a standard Borel space $V$ is \textit{hyperfinite} if the equivalence relation induced by its connected components is hyperfinite. A graphing is \textit{hyperfinite} if it is hyperfinite a.e. If a locally countable Borel graph $G$ is defined on a standard probability space $(V,\nu)$, then $G$ is a.e. hyperfinite if and only if for every $\varepsilon>0$ there exists $k$ and a Borel set $V'\subseteq V$ with $\nu(V\setminus V')<\varepsilon$ such that all the components of the graph induced by $G$ on $V'$ have size at most $k$ (see \cite{elek}). Any Schreier graph of an amenable group action is a.e. hyperfinite.

In this paper, a \textit{graphing} is a Borel, locally countable probability measure preserving graph on a standard probability space $(V,\nu)$. 
We use the standard graph theoretic notation and refer to $V$ as to $V(G)$ and denote the set of edges by $E(G)$. 
For a set $W\subseteq V$ we write $\partial W$ for  the set of edges between a vertex in $W$ and a vertex not in $W$ and $E(W)$ for the set of edges between vertices of $W$. For a set $W\subseteq V$ we write $N(W)$ for the set of vertices adjacent to at least one vertex in $W$.
A graph is \textit{$d$-regular} if the degree of every vertex is equal to $d$ and a graph is \textit{regular} if it is $d$-regular for some $d\in\mathbb{N}$.
Given a graphing $G$ on $(V,\nu)$, there is a natural probability measure on the set of edges $E(G)$ (see e.g. \cite[Section 18]{kechris.miller}), which we denote by $\mu$ defined for $F\subseteq E(G)$ as $\mu(F)=\frac{1}{2}\int_X\mathrm{deg}_F(x)d\nu$, where $\mathrm{deg}_F(x)$ is the degree in the spanning subgraph induced by $F$. Equivalently, \cite[18.2]{lovasz} for $A,B\subseteq V(G)$ we have $\mu(A\times B)=\int_A \mathrm{deg}_B(x)d\nu(x)$, where $\mathrm{deg}_B(x)$ denotes the number of edges from $x$ to $B$. We use the standard measure theoretic terminology regarding $(E(G),\mu)$, e.g., we say that two measurable sets are \textit{essentially equal} if their symmetric difference is a null set.

Throughout this paper we will work primarily with perfect fractional matchings. Given a locally countable graph (or a graphing) $G$, function $f:V(G)\rightarrow \mathbb{N},$ and a function $c:E(G)\rightarrow \mathbb{N},$ we say that a symmetric function  $\tau:E(G)\rightarrow [0,\infty)$ is a \textit{perfect fractional $f$-matching bounded by $c$} if $\tau(x,y)\leq c(x,y)$ for each edge $(x,y)\in E(G),$ and $\sum_{y\in N(x)}\tau(x,y)= f(x)$ for every  $x\in V(G)$. 
A \textit{perfect $f$-matching bounded by $c$} is a perfect fractional $f$-matching bounded by $c$, which takes values in $\mathbb{N}$.

Given a group $\Gamma$ and a probability measure on $[0,1]$ 
we consider its Bernoulli shift, i.e., $[0,1]^{\Gamma}$ equipped with the product measure and a natural probability measure preserving action of the group $\Gamma$. A Cayley graph $G$ of $\Gamma$ admits a \textit{factor of iid perfect matching} if there is a $\Gamma$-invariant measure on the set of perfect matchings of $G$ which is a factor of the Bernoulli shift.
Factor of iid processes can be phrased in terms of measurable subsets of the Bernoulli shift. For example, a Cayley graph of $\Gamma$ admits a factor of iid
perfect matching if and only if there exists a measurable a.e. perfect matching of the corresponding Schreier graphing on the Bernoulli shift.

Given a bounded Polish metric space $(X,\rho)$ and two Borel probability measures $\kappa_1,\kappa_2$ on $X$, a \textit{coupling} is a Borel probability measure on $X\times X$ whose marginals are $\kappa_1$ and $\kappa_2$. The \textit{Wasserstein distance} (a.k.a. the \textit{Kantorovich--Rubinstein distance})  $W(\kappa_1,\kappa_2)$ is defined as the infimum of $\int_{X\times X}\rho(x,y) d\kappa(x,y)$ where $\kappa$ is a coupling of $\kappa_1$ and $\kappa_2$.
The space $\mathcal{P}(X)$ of Borel probability measures on $X$ equipped with the Wasserstein distance is also a Polish metric space \cite[Theorem 6.18]{villani}, and the Wasserstein distance induces the weak topology on $\mathcal{P}(X)$ \cite[Theorem 6.9]{villani}.

A probability measure preserving (pmp) action of a group $\Gamma$ on a standard Borel space $X$ with a Borel probability measure $\mu$ (invariant under the action) is usually denoted by $\Gamma\curvearrowright(X,\mu)$. We say that an action is \textit{totally ergodic} if every infinite subgroup of $\Gamma$ acts ergodically on $X$. Any Bernoulli shift of $\Gamma$ is totally ergodic.



Given an infinite graph $G$, two semi-infinite paths 
are \textit{end-equivalent} if for every finite set $F\subseteq V$ they can be connected by a path contained in $V\setminus F$. 
An \textit{end} of $G$ is an end-equivalence class of semi-infinite paths. 
In particular, for a finite $n$ an infinite graph 
has $\textit{$n$ ends}$ if for every finite set $F\subseteq V$, the induced graph on $V\setminus F$ has at most $n$  connected components containing a semi-infinite path, and there exists a finite set $F\subseteq V$ such that the induced graph on $V\setminus F$ has exactly $n$ components containing a semi-infinite path.
For locally finite graphs, this is a special case of the definition of the space of topological ends of a topological space \cite{frendenthal}. 
%
%
Any hyperfinite graphing has at most two ends in a.e. connected component (see \cite[Theorem 5.1]{adams} for the locally finite case and \cite[Lemma 3.23]{jkl} for the general case).
We say that a graphing $G$ is \textit{$n$-ended} if the connected component of every vertex of $G$ has $n$ ends.  

It is well-known that if the Cayley graph of a finitely generated group is two-ended, then the group admits $\mathbb{Z}$ as a normal subgroup of finite index, (see \cite[Theorem 5.12]{scott.wall} or \cite{tointon.yadin}). More generally, a connected vertex-transitive graph is two-ended if and only if it is quasi-isometric to $\mathbb{Z}$ (see \cite{trofimov,losert} or \cite[Corollary 3.13]{two-ended}).

A graphing has \textit{linear growth} if for every vertex $x$ there exists a $C$ such that $|B(x,r)| \leq Cr$ for every $r$. Note that this is equivalent to saying that for every orbit there exists a constant $C$ such that for every vertex $x$ there is a constant $D$ such that $|B(x,r)|\leq Cr+D$.

The following proposition collects the equivalent characterizations of a.e. two-ended graphings. It shows that a graphing is a.e. two-ended if and only if it has a.e. linear growth. In particular, hyperfinite graphings of a.e. superlinear growth are one-ended. 
This was essentially known and mostly proved. 
Benjamini and Hutchcroft \cite[Theorem 1.2, Remark 2.3]{benjamini.hutchcroft} proved one of the implications and mentioned another, 
and many ingredients of the proof  essentially appear in their work, though they are phrased somewhat differently.
In \cite{cgmtd}, Conley, Gaboriau, Marks and Tucker-Drob proved for every graphing that a.e. superlinear growth implies the existence of an a.e. one-ended subforest, though they only included the proof in case of superquadratic growth (see 
\cite[Theorem 2.6 and Remark 2.7]{cgmtd}). We give a proof of the proposition below in the Appendix.
The proof uses the existence of one-ended subforests (whose components may be smaller than those of a graphing), which exist in any graphing with only infinite components that is nowhere two-ended. For amenable one-ended unimodular random graphs, the latter was proved by Tim\'{a}r \cite{timar}, and the general statement was proved by Conley, Gaboriau, Marks and Tucker-Drob \cite[Theorem 2.1]{cgmtd}.


\begin{prop}\label{equi}
Let $G$ be a locally finite graphing. The following are equivalent.

\begin{itemize}
\item[(i)] $G$ has linear growth a.e.

\item[(ii)]$G$ is two-ended a.e.

\item[(iii)]  There exists a measurable partition $V(G) = \bigcup_{n=1}^{\infty} A_n$ a.e. such that for every $n>0$ the induced graph on $A_n$ consists of infinite components, and there exists a measurable family $\mathcal{C}_n$ of connected non-adjacent sets of size at most $n$ such that  $\bigcup\mathcal{C}_n\subseteq A_n$ and the induced graph on $A_n \setminus \bigcup\mathcal{C}_ n$ has only finite, connected components, all of these are adjacent to exactly two sets of $\mathcal{C}_n$, and every vertex of $\mathcal{C}_n$ is adjacent to exactly two components of $A_n \setminus \bigcup\mathcal{C}_n$.


\end{itemize}
\end{prop}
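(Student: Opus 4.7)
My plan is to prove the three implications $(\mathrm{iii}) \Rightarrow (\mathrm{ii})$, $(\mathrm{ii}) \Rightarrow (\mathrm{iii})$, and $(\mathrm{i}) \Leftrightarrow (\mathrm{ii})$, treating (iii) as a concrete structural certificate of two-endedness. The easiest step is $(\mathrm{iii}) \Rightarrow (\mathrm{ii})$: in any infinite component of the graph induced on $A_n$, removing $\mathcal{C}_n$ leaves finite pieces each adjacent to exactly two elements of $\mathcal{C}_n$, so the component is linearly ordered by the separators and has exactly two ends; summing over the partition then gives two-endedness a.e.

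For $(\mathrm{ii}) \Rightarrow (\mathrm{iii})$ I would introduce a Borel function $m : V(G) \to \mathbb{N}$ recording, for each $x$, the minimum size of a connected subset that separates the two ends of the component of $x$. Two-endedness makes $m$ a.e. finite, so the level sets $A_n = m^{-1}(n)$ give the partition. Within each $A_n$ there are only countably many candidate minimal separators of size at most $n$, and I would select a measurable subfamily by Luzin--Novikov uniformization, then thin out by keeping alternate separators along the natural linear order induced by the two ends in order to meet the non-adjacency and covering conditions in the statement.

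For $(\mathrm{ii}) \Rightarrow (\mathrm{i})$ I would note that on $A_n$ the chosen separators tile each component into finite pieces whose diameters are bounded in terms of $n$, so a ball of radius $r$ in such a component meets $O(r)$ pieces and therefore has linear size. For $(\mathrm{i}) \Rightarrow (\mathrm{ii})$ I would argue that linear growth forces amenability of a.e. component, hence hyperfiniteness by Connes--Feldman--Weiss, and hence at most two ends a.e. by Adams' theorem. The remaining task is to rule out a positive-measure one-ended invariant subset $W$: for this I would invoke the CGMTD construction to produce a measurable one-ended spanning subforest of $G|_W$ and run a mass-transport argument, deducing that such a subforest must have superlinear growth and thereby contradicting the linear growth of $G$ which dominates it.

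I expect the main obstacle to be the measurability argument in $(\mathrm{ii}) \Rightarrow (\mathrm{iii})$: although each component has only countably many minimal separators of any bounded size, producing a Borel selection that simultaneously satisfies non-adjacency and the precise covering requirement seems to need a careful iterative refinement rather than a one-shot uniformization. A secondary difficulty is the mass-transport step excluding one-ended components of linear growth in $(\mathrm{i}) \Rightarrow (\mathrm{ii})$, where a naive critical-branching-process estimate still allows linear growth and one must exploit the finer structure of the one-ended subforest produced by CGMTD, which is why the implication had only been written down under superquadratic growth in prior work.
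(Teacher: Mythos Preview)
Your outline for $(\mathrm{iii})\Rightarrow(\mathrm{ii})$, $(\mathrm{ii})\Rightarrow(\mathrm{iii})$, and $(\mathrm{i})\Rightarrow(\mathrm{ii})$ is broadly in line with the paper's approach (the paper cycles $(\mathrm{i})\to(\mathrm{ii})\to(\mathrm{iii})\to(\mathrm{i})$, but the content is the same). The selection of separators in $(\mathrm{ii})\Rightarrow(\mathrm{iii})$ is close to what the paper does, though the paper uses the Kechris--Solecki--Todorcevic coloring lemmas rather than Luzin--Novikov, and it does need the extra refinement step you anticipate to make the gaps connected. Your $(\mathrm{i})\Rightarrow(\mathrm{ii})$ sketch is also essentially correct: the paper likewise reduces via CGMTD to a one-ended subforest and then runs a quantitative argument (a layered count on the tree rather than a bare mass-transport principle) to force superlinear growth.

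The real problem is your argument for $(\mathrm{ii})\Rightarrow(\mathrm{i})$. You write that on $A_n$ the separators tile each component into finite pieces ``whose diameters are bounded in terms of $n$''. This is false: condition (iii) bounds only the size of the \emph{separators} by $n$, not the size or diameter of the finite components between them. Those gaps can be arbitrarily large, even within a single $A_n$, so a ball of radius $r$ need not meet $O(r)$ pieces, and your linear-growth conclusion does not follow. The paper's proof of $(\mathrm{iii})\Rightarrow(\mathrm{i})$ is genuinely measure-theoretic: for each $C>0$ it considers the family of ``intervals'' $I$ (unions of consecutive gaps) with $|I|>C\cdot\ell(I)$, thins this family so each vertex is covered at most twice, and uses the pmp property to bound the measure of cutset vertices lying in the middle of such intervals by $2n/C$. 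Letting $C\to\infty$ then shows that in a.e.\ component the gap sizes are linearly controlled. Some argument of this kind---exploiting invariance of the measure, not just the combinatorics of a single component---is unavoidable here, and your proposal is missing it.
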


We will be working in the setting when the vertex set $V$ is endowed with a probability measure $\nu$. The graphing $G$ does not need to be of bounded degree, and in general $\mu$ may not be a probability measure. However, 
the following lemma helps to reduce some some problems regarding locally countable graphings to the setting of 
locally finite graphings. 
The lemma is a version of \cite[Theorem 2.1]{cgmtd} for graphings which are not necessarily locally finite. We will use it in the hyperfinite case only, but the statement does not need this assumption. The proof of the lemma below is included in the Appendix.

\begin{lemma}\label{locfin}
Let $G$ be a 
graphing which is nowhere zero-ended or two-ended. Then $G$ admits a hyperfinite one-ended spanning subgraphing $H$ such that 
 $\mu(H)<\infty$, in particular, $H$ is a.e. locally finite.
\end{lemma}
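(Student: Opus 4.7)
My plan is to construct $H$ as a one-ended spanning subforest of $G$, first reducing to the locally finite case and then appealing to \cite[Theorem 2.1]{cgmtd}.

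For the reduction, I would construct a locally finite Borel spanning subgraphing $G_0\subseteq G$ that remains nowhere zero-ended or two-ended. Using the Feldman--Moore theorem I decompose $E(G)=\bigsqcup_{n\in\mathbb{N}}M_n$ into countably many Borel matchings, so that $G_N:=\bigcup_{n\leq N}M_n$ is locally finite of degree at most $N$. Since $G$ is nowhere zero-ended, every component of $G$ contains a semi-infinite ray, and a measurable selection procedure then keeps only finitely many edges at each vertex so that $G_0$ retains only infinite, nowhere two-ended components. This is the main technical step: without an a priori degree bound on $G$, one must carefully attach each vertex to a selected ray in its component while preserving the multi-ended structure.

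Having constructed $G_0$, applying \cite[Theorem 2.1]{cgmtd} yields a one-ended spanning subforest $H\subseteq G_0\subseteq G$ with every tree one-ended. To check $\mu(H)<\infty$ I would use a mass transport argument. In any one-ended tree each vertex $v$ has a unique neighbor $p(v)$ on its geodesic ray to the end, and the assignment $v\mapsto\{v,p(v)\}$ is a Borel bijection between $V(G)$ and $E(H)$. The mass transport principle applied to the graph of $p$ gives $\int_V|p^{-1}(v)|\,d\nu=\nu(V)=1$, and since $\deg_H(v)=1+|p^{-1}(v)|$ it follows that $\int_V\deg_H\,d\nu=2$, so $\mu(H)=1$. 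In particular $H$ is a.e.\ locally finite.

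Finally, $H$ is hyperfinite by a standard argument using the parent map: the Borel equivalence relations $u\sim_n v\iff p^n(u)=p^n(v)$ form an increasing sequence, each $\sim_n$-class is the depth-$n$ descendant set of some vertex (hence finite in a locally finite tree), and their union is the connected component equivalence relation of $H$, since any two vertices in the same one-ended tree have geodesic rays to the end that eventually agree.
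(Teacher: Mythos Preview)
Your overall strategy---reduce to a locally finite, nowhere zero- or two-ended spanning subgraphing and then invoke \cite[Theorem 2.1]{cgmtd}---matches the paper's. However, your reduction step is not a proof: you yourself flag it as ``the main technical step'' and then give only an outline. Writing $G_N=\bigcup_{n\le N}M_n$ produces locally finite graphs, but nothing guarantees that any $G_N$, or any Borel choice of $N$ varying over $V$, has only infinite, nowhere two-ended components; each $G_N$ may well shatter into many finite pieces. The phrase ``a measurable selection procedure \ldots\ attach[es] each vertex to a selected ray in its component while preserving the multi-ended structure'' names the goal without supplying the construction, and making this precise is essentially the entire content of the lemma.

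The paper does \emph{not} thin $G$ down. Instead it builds up from the empty graph in two rounds of edge additions with geometrically decaying measure. First it constructs $E_0\subseteq E_1\subseteq\cdots$ by, at stage $i$, adding one outgoing edge from a $2^i$-discrete set of start-vertices in each component, so that finite components of $E_i$ have size $\ge 2^i$ and $\mu(E_{i+1}\setminus E_i)\le 2^{-i}$; the union $U$ then has $\mu(U)\le 1$ and only infinite components. Second, it repeats the trick starting from $U=U_0\subseteq U_1\subseteq\cdots$, now adding edges out of each two-ended component from a $2^i$-discrete set, so that two-ended components of $U_i$ contain at least $2^i$ components of $U$. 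The union $H'=\bigcup U_i$ has $\mu(H')\le 2$, hence is a.e.\ locally finite, and every component contains infinitely many $U$-components, forcing superlinear growth and thus (by Proposition~\ref{equi}) nowhere two-endedness. Only then is \cite[Theorem 2.1]{cgmtd} applied to $H'$.

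Your steps after the reduction are correct and in one place tidier than the paper: the mass-transport computation $\int_V\deg_H\,d\nu=1+\int_V|p^{-1}(v)|\,d\nu=2$ gives $\mu(H)=1$ directly from the one-ended forest structure, whereas the paper bounds $\mu(H)\le\mu(H')\le 2$ at the intermediate stage. Your hyperfiniteness argument via the parent map is standard and fine.
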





The following lemma is implicit in  \cite{wehrung}, \cite{laczkovich.dec} or \cite{ciesla.sabok} in case of amenable group actions. 

\begin{lemma}\label{lemat}
Let $G$ be a hyperfinite locally finite graphing, $H\subseteq G$ a spanning subgraphing, 
$f:V(G)\to\mathbb{N}$ and 
 $a,b:E(G)\to[0,\infty)$ be measurable such that $a(e)\leq b(e)$ holds for every edge $e$.
Assume that there exists a perfect fractional $f$-matching on $G$ such that for every edge $e \in E(H)$ its value is in $[a(e),b(e)]$. Then there exists a measurable perfect fractional $f$-matching on $G$ with values in $[a(e),b(e)]$ for $e\in E(H)$.
\end{lemma}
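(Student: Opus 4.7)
The plan is to exploit hyperfiniteness to reduce the global problem to a Borel-measurable family of finite-dimensional linear programs, solve each one by measurable selection, and then pass to a weak-$\ast$ limit.

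First, using hyperfiniteness and local finiteness (as recalled after \cite{elek}) I would pick, for each $n\ge 1$, a Borel subset $V_n\subseteq V$ with $\nu(V\setminus V_n)<2^{-n}$ such that each connected component of the induced subgraph $G|_{V_n}$ has at most $k_n$ vertices. After a further measure-$<2^{-n+1}$ shrinking I would also arrange that every vertex of $V_n$ has its full $G$-neighbourhood contained in $V_n$; call such vertices \emph{interior at stage $n$}. Borel--Cantelli then guarantees that $\nu$-almost every vertex is interior at stage $n$ for all sufficiently large $n$.

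Second, for each finite connected component $C$ of $G|_{V_n}$ I would consider the polytope
\[
P_C=\bigl\{\sigma:E(G|_C)\to[0,\infty):\ \sigma(e)\in[a(e),b(e)]\ \text{for}\ e\in E(H|_C),\ \textstyle\sum_{y\in C,\,y\sim x}\sigma(x,y)=f(x)\ \text{for interior}\ x,\ \le f(x)\ \text{otherwise}\bigr\}.
\]
The restriction to $E(G|_C)$ of the hypothesised (possibly non-measurable) perfect fractional $f$-matching lies in $P_C$, so $P_C$ is a nonempty bounded convex polytope depending Borel-measurably on $C$. The Kuratowski--Ryll-Nardzewski selection theorem (or simply taking the barycentre of $P_C$) then yields a Borel-measurable $\tau_n\colon E(G)\to[0,\infty)$ realising such a selection on each $E(G|_C)$ and, say, vanishing on edges not entirely inside a finite component of $G|_{V_n}$.

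Third, the sequence $(\tau_n)$ is uniformly bounded on each edge (by $b(e)$ on $E(H)$ and by $\max(f(x),f(y))$ for $e=(x,y)$ elsewhere), so it lies in a weak-$\ast$-compact subset of $L^\infty$ of the directed edge space. Extract a weak-$\ast$ convergent subsequence $\tau_{n_k}\to\tilde\tau$. The pointwise constraints $\tilde\tau(e)\in[a(e),b(e)]$ on $E(H)$ pass to the limit because the target interval is closed and convex. For the matching equation, observe that $\sum_{y\sim x}\tau_n(x,y)=f(x)$ holds exactly whenever $x$ is interior at stage $n$, which by the first step is true for $\nu$-a.e.\ $x$ and all large $n$; testing against an arbitrary $h\in L^1(V,\nu)$ and applying dominated convergence yields $\int h(x)\bigl(\sum_{y\sim x}\tilde\tau(x,y)\bigr)\,d\nu(x)=\int h(x)f(x)\,d\nu(x)$, whence $\sum_{y\sim x}\tilde\tau(x,y)=f(x)$ for $\nu$-a.e.\ $x$. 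Symmetry is enforced by averaging $\tilde\tau$ with its reflection, and the exceptional null set of vertices can be handled by redefining $\tilde\tau$ there using any feasible local choice (feasibility being inherited from the global hypothesis restricted to that null set).

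The main obstacle is verifying that the matching equation survives the passage to the limit: the identity only holds exactly for $\tau_n$ at vertices that are interior at stage $n$, so the key point is to combine the Borel--Cantelli ``eventually interior'' property with local finiteness of $G$ — which turns each vertex constraint into a \emph{finite} sum — in order to transport the equality through the weak-$\ast$ limit and conclude measurable feasibility almost everywhere.
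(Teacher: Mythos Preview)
Your approach is essentially the same as the paper's: exploit hyperfiniteness to get a witnessing sequence of finite partitions, solve a finite feasibility problem on each cell by measurable selection (the paper takes the lexicographically least extendable choice rather than a barycentre, but this is cosmetic), and pass to a weak limit. The structure is right.

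There is, however, a genuine technical gap in your compactness step. You assert that $(\tau_n)$ ``lies in a weak-$*$-compact subset of $L^\infty$'' because it is bounded edge-by-edge by $b(e)$ on $E(H)$ and by $\max(f(x),f(y))$ elsewhere. But neither $f$ nor $b$ is assumed bounded in the statement, so this is only a pointwise bound by a possibly unbounded measurable function, and such a sequence need not sit inside any norm ball of $L^\infty$; weak-$*$ sequential compactness fails. The same issue resurfaces when you test the vertex constraint against $h\in L^1(V,\nu)$: the induced test function $e\mapsto h(\text{source}(e))$ on the edge space has $L^1$-norm $\int |h|\deg\,d\nu$, which need not be finite since $G$ is only locally finite.

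The paper fixes both problems in one stroke: it first replaces $\mu$ by an equivalent probability measure $\mu'$ on $E(G)$ chosen so that $a,b\in L^2(E(G),\mu')$, and then runs the entire argument in $L^2(\mu')$, extracting a weakly convergent subsequence and invoking Mazur's lemma to get norm (hence a.e.) convergence of convex combinations. Your argument becomes correct once you make the same move; alternatively you could normalise by the pointwise bound and work with $\tau_n/B$ in the unit ball of $L^\infty$, but the change-of-measure route is cleaner.
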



\begin{proof}
First, we can find a probability measure $\mu'$ on $E(G)$ which is equivalent to $\mu$ (i.e. $\mu\ll\mu'$ and $\mu'\ll\mu$) and such that $a,b\in L^2(E(G),\mu')$.
Using the hyperfiniteness of $G$, 
take a sequence of measurable partitions $\mathcal{C}_n$ of measurable subsets $V_n\subseteq V$ into finite subsets such that $\nu(V_n)\geq 1-\frac{1}{2^n}$. 
Construct a sequence of functions $g_n\in L^2(E(G),\mu')$ with $g_n(e)\in[a(e),b(e)]$ for all $e\in E(H)$. For each $n$ and an element $F$ of $\mathcal{C}_n$ choose the lexicographically least function $\varphi_n(F)$ on the edges in $F$ which can be extended to a perfect fractional $f$-matching bounded by $a$ and $b$ on the edges of $H$.
For each $n$, let $g_n\in L^2(E(G),\mu')$ be any function that extends the union of $\varphi_n(F)$ for all $F\in \mathcal{C}_n$ and such that $\|g_n\|_2\leq\|b\|_2$. Each function $g_n$ is a perfect $f$-matching on the vertices that are in the interior of a cell in $\mathcal{C}_n$. The sequence $g_n$ has a weakly convergent subsequence in $L^2(E(G),\mu')$ and write $g$ for its limit. By Mazur's lemma, there exists a sequence of convex combinations of $g_n$'s that converges to $g$ in $L^2(E(G),\mu')$ and that easily implies that $g$ is a.e. a fractional perfect $f$-matching that is bounded by $a$ and $b$ on the edges of $H$. 
\end{proof}

The following basic lemma will be used in Section \ref{sec:tilings}.

\begin{lemma}\label{subgraph}
Let $G$ be a finite, connected graph and $N \subseteq V(G)$ a subset of even size.
Then there exists a spanning subgraph $H$ of $G$ such that every vertex of $P$ has odd degree in $H$, and every vertex of  $V(G) \setminus P$ has even degree in $H$.
\end{lemma}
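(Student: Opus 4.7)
The plan is to use a spanning tree and pair up the vertices of $N$ to construct $H$ as a symmetric difference of paths. Note that the statement's $P$ should read $N$ (a typo in the statement), i.e., we want vertices in $N$ to have odd degree and vertices outside $N$ to have even degree.

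First I would fix a spanning tree $T$ of $G$, which exists because $G$ is connected. Since $|N|$ is even, I can partition $N$ into pairs $\{u_1,v_1\},\dots,\{u_k,v_k\}$ with $k=|N|/2$. For each pair $\{u_i,v_i\}$, let $P_i$ be the unique path in $T$ joining $u_i$ to $v_i$, viewed as a subset of $E(G)$. Then set
\[
H := P_1 \mathbin{\triangle} P_2 \mathbin{\triangle} \cdots \mathbin{\triangle} P_k,
\]
where $\triangle$ denotes symmetric difference. Equivalently, treating indicator functions of edge sets as elements of $\mathbb{F}_2^{E(G)}$, the edge $e$ lies in $H$ iff $e$ lies in an odd number of the $P_i$'s.

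The key verification is that $H$ has the correct degree parities. For any vertex $v\in V(G)$, let $\deg_H(v)$ denote the degree of $v$ in $H$; working in $\mathbb{F}_2$, one has
\[
\deg_H(v) \equiv \sum_{i=1}^{k} \deg_{P_i}(v) \pmod{2}.
\]
For each path $P_i$, the quantity $\deg_{P_i}(v)$ equals $0$ if $v$ is not on $P_i$, equals $2$ if $v$ is an interior vertex of $P_i$, and equals $1$ if $v$ is an endpoint of $P_i$. Thus $\deg_H(v) \pmod 2$ equals the number of paths $P_i$ for which $v$ is an endpoint, reduced modulo $2$. By construction, every $v\in N$ is an endpoint of exactly one path (the path assigned to its pair), while every $v\notin N$ is never an endpoint. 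Therefore $\deg_H(v)$ is odd exactly when $v\in N$, as required.

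The argument has essentially no obstacle: the only subtlety is that degrees in a symmetric difference are not directly additive in $\mathbb{Z}$, but their parities are, which is exactly what the problem asks for. Connectedness of $G$ is used only to guarantee a spanning tree (and hence the paths $P_i$), and the evenness of $|N|$ is used only to pair up its vertices.
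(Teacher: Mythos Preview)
Your proof is correct and takes essentially the same approach as the paper: both arguments pair up the vertices of $N$, connect each pair by a path, and take the symmetric difference of these paths. The paper phrases this as an induction on $|N|$ (peeling off one pair at a time and using any connecting path in $G$), while you unroll the induction and use the unique paths in a fixed spanning tree, but the underlying idea is identical.
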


\begin{proof}
We prove by induction on the size of $N$. If $N=\emptyset$ then $H$ can be the edgeless spanning subgraph. Else choose two different vertices $s,t \in N$. 
Let $L$ denote a path connecting $s$ and $t$. By induction, there exists a spanning subgraph $H'$ of $G$ such that the degree of each vertex in $N \setminus \{ s,t \}$ 
is odd, and the degrees of the other vertices are even in $H'$. Let $E(H)$ be the symmetric difference of $E(L)$ and $E(H')$.
\end{proof}

\section{Measurable fractional matchings}
\label{sec:rounding}

In this section we collect some definitions and 
facts concerning measurable fractional matchings that will be used in the remainder of the paper and give a general formulation of our main results.  

Our main results will show that one can convert measurable perfect fractional matchings into measurable perfect matchings and it applies to hyperfinite one-ended graphings in 'most' circumstances (including regular graphings). However, the example below shows that we cannot in general round measurable fractional matchings in one-ended graphings without some additional assumptions.

\begin{example} \label{example}
There exists a hyperfinite one-ended bounded degree (measurably) bipartite graphing that admits a measurable perfect fractional matching but no measurable matching.
\end{example}

\begin{proof}
We begin with a construction of a bipartite graphing since it is slightly simpler than that of a measurably bipartite graphing. 

Recall that in \cite{lacz3} Laczkovich constructed a 2-regular graphing without a measurable perfect matching.  Namely, consider the irrational rotation on the circle $x\mapsto x+\alpha\ (\mathrm{mod}\ 1)$ (with $\alpha<\frac{1}{2}$) and let $G_\alpha$ be the Schreier graph of the induced action. The rotation by $2\alpha$ is ergodic
and this implies that there is no measurable perfect matching in $G_\alpha$ (cf. Claim \ref{folklore}).

Now, let $\beta$ be such that $\alpha,\beta$ and $1$ are linearly independent over the rationals, and consider the action of $\mathbb{Z}^2$ on $[0,1]$ induced by the rotations by $\alpha$ and $\beta$. Denote by $G_{\alpha,\beta}$ its Schreier graphing. For each edge coming from $G_\beta$ replace that edge by a cycle of length $4$, connected by two opposite edges, as in Figure \ref{fig:3.2}, and denote this graphing by $G$. In order to define the probability measure on $V(G)$ let $H$ be the five-vertex graph consisting of a vertex connected by an edge to a four-cycle. Consider the set $[0,1] \times V(H)$ with the uniform probability measure and the bijection $V(G) \rightarrow [0,1] \times V(H)$ mapping every vertex of $V(G)$ to a pair: the first coordinate is the closest vertex of $G_{\alpha,\beta}=[0,1]$ to the left, and the second coordinate is its position in the isomorphic copy of $H$ consisting a vertex of $G_{\alpha,\beta}$ and the four-cycle to its right. This induces a probability measure on $V(G)$. It is not difficult to see that $G$ is hyperfinite, given that $G_{\alpha,\beta}$ is hyperfinite. $G$ is pmp because $G_\alpha$ is pmp and the extra edges are covered by six pmp involutions.


\begin{figure}[ht]
    \centering
    \includegraphics[width=\textwidth]{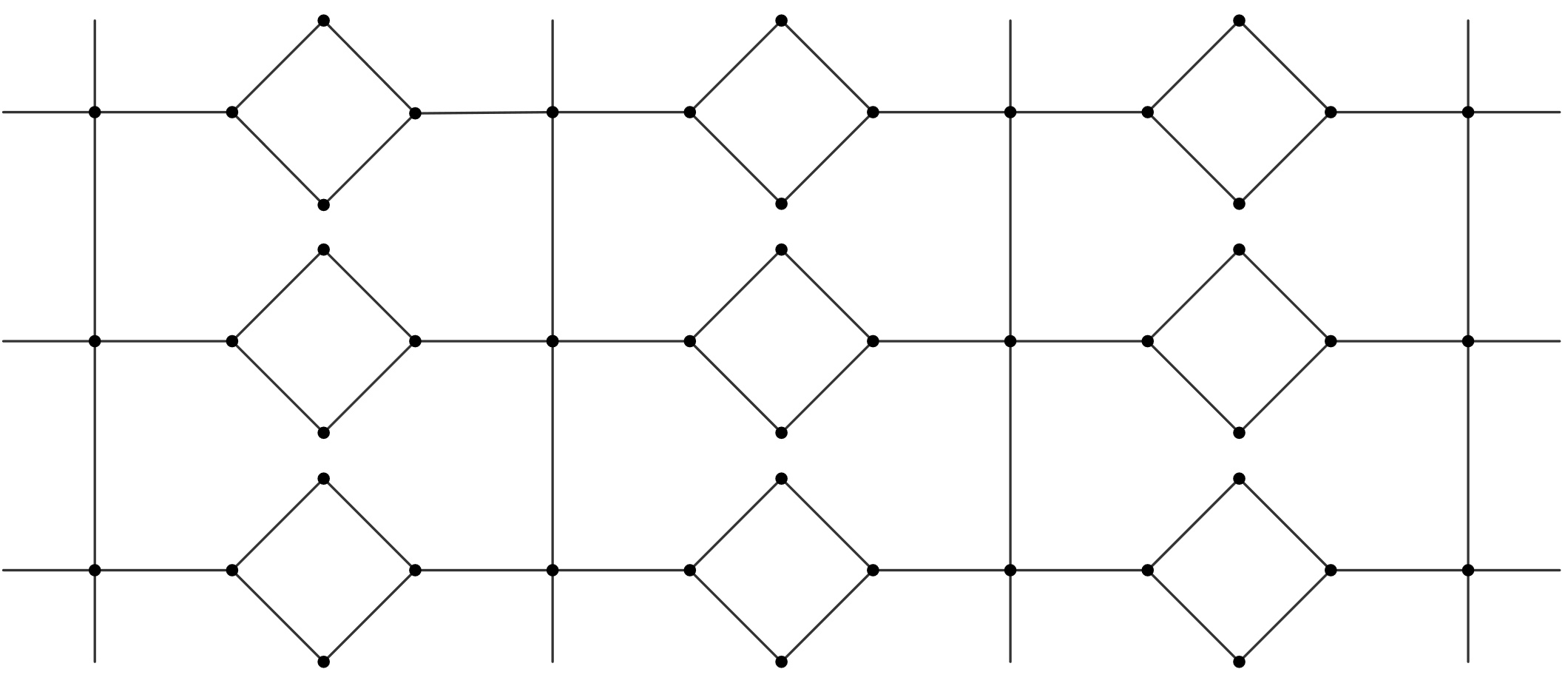}
    \caption{A component of the graph from Example \ref{example}}
    \label{fig:3.2}
\end{figure}

The fact that the graphing $G$ is hyperfinite  and one-ended follows from the hyperfiniteness and one-endedness of $G_{\alpha,\beta}$. $G$ admits a perfect fractional matching which is a perfect matching on the $4$-cycles and is equal to $\frac{1}{2}$ on the edges from $G_\alpha$. Note that a measurable perfect matching on $G$ would have to be a perfect matching on each $4$-cycle and hence its restriction to the edges of $G_\alpha$ would be a perfect matching in $G_\alpha$.

Now we describe the construction of a measurable bipartite graphing. It is based on the construction above and
the second author's construction from \cite{gabor}.
Let $0<\alpha, \beta <1$ be irrational and linearly independent over the rationals. Consider the following four isometries of the real line: $x \mapsto x, \text{ } x \mapsto x+ 2\alpha,\text{ } x \mapsto 2-x,\text{ } x \mapsto 2\alpha-x$,
and the measurably bipartite graphing $H_\alpha$ whose classes $I$ and $J$ are Borel isomorphic to the intervals $[0,1]$ and $[\alpha, 1+\alpha]$, respectively, and the set of edges corresponds to the union of the restrictions of the graphs of the four isometries. As shown in \cite{gabor}, the graphing $H_\alpha$ does not admit a measurable perfect matching. For every 
$i \in I, i \leq 1-\beta$ 
add a cycle of length $4$ to the graphing $H_\alpha$, and connect one vertex of the cycle to $i$ and the opposite vertex to $i+\beta$. Denote the induced graphing by $H_{\alpha,\beta}$. 

As before, the fact that $H_{\alpha,\beta}$ is hyperfinite follows from the fact that a Schreier graphing of an action of an amenable group is hyperfinite.
It is one-ended by Proposition \ref{equi}, since every component of $H_{\alpha,\beta}$ contains infinitely many components of $H_\alpha$. 

Again, $H_{\alpha,\beta}$ admits a measurable perfect fractional matching which is a perfect matching on the $4$-cycles and is equal to $\frac{1}{2}$ on the edges from $H_\alpha$. Any measurable perfect matching on $H_{\alpha,\beta}$ would have to be a perfect matching on each $4$-cycle and hence its restriction to the edges of $H_\alpha$ would be a perfect matching in $H_\alpha$.
\end{proof}

This example shows that it is possible to construct one-ended graphings and perfect fractional matchings that are forced to obtain specific values on some edges. After removing these forced edges, the remaining graphing may be two-ended and admit no measurable perfect matching. 
In order to avoid this problem we consider the following notion.

\begin{definition}
Given a graphing $G$, measurable $f:V(G)\rightarrow \mathbb{N}$ and $c:E(G)\to\mathbb{N}$ and a measurable perfect fractional $f$-matching $\tau$, the \textit{support} of $\tau$ with respect to $c$ is $$\supp(\tau,c)=\{e\in E(G): 0<\tau(e)<c(e)\}.$$ If $c$ is equal to $1$ everywhere, then we write $\supp(\tau)$ for $\supp(\tau,c)$.
\end{definition}

Now we can state our main theorem on rounding measurable perfect fractional matchings. 

\begin{theorem}\label{fmatching}
  Let $G$ be a bipartite graphing  $f: V(G) \to \mathbb{N}$ be integrable and $c: E(G) \to \mathbb{N}$ be measurable. Suppose that $G$ admits a measurable perfect fractional $f$-matching $\tau$ such that $\supp(\tau,c)$ is hyperfinite and nowhere two-ended .
Then $G$ admits a measurable perfect $f$-matchings $\sigma$ bounded by $c$ such that $|\sigma-\tau|<1$. Moreover, $\tau$ belongs to the closure of the convex hull of such measurable perfect $f$-matchings, with respect to the a.e. convergence. 


\end{theorem}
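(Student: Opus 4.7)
The plan is to reduce the problem to rounding on the support $S := \supp(\tau, c)$, where the fractional data of $\tau$ lives, and then to exploit the one-ended structure of $S$ via the connected toast machinery. Outside $S$, each value $\tau(e)$ already lies in $\{0, c(e)\}$, so any integer rounding must agree with $\tau$ there; the freedom is entirely in $S$. Writing $\tau = \lfloor \tau \rfloor + \{\tau\}$ on $S$ and noting that $k(v) := \sum_{e \ni v,\, e \in S} \{\tau(e)\}$ is an integer at each vertex, the task reduces to finding a measurable $\{0,1\}$-valued rounding $\bar\sigma$ of $\{\tau\}$ on $S$ with vertex sums $k(v)$; then $\sigma := \tau - \{\tau\} + \bar\sigma$ is the desired rounding. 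Since $S$ is bipartite, hyperfinite, and nowhere two-ended, a.e.\ infinite component of $S$ is one-ended by Adams's theorem \cite{adams}, and its finite components can be dispatched by the integrality of the bipartite $k$-matching polytope combined with a Borel measurable selection in the spirit of Lemma \ref{lemat}.

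On each infinite one-ended component I would invoke the connected toast construction of \cite{cgmtd, timar}: a measurable nested exhaustion by finite connected subsets $T_0^x \subset T_1^x \subset \cdots$ whose complements remain connected in the ambient component. The rounding is then built inductively along this exhaustion. At stage $n$, one has a $\{0,1\}$-valued $\bar\sigma_n$ on the edges of $T_n$ whose vertex sum equals $k(v)$ at all interior vertices and differs from $k(v)$ by an integer \emph{defect} at boundary vertices. Using a parity argument of the type in Lemma \ref{subgraph}, adapted to bipartite matchings, one finds within the annular region $T_{n+1}\setminus T_n$ an auxiliary subgraph whose symmetric difference with $\bar\sigma_n$ cancels the defect at the boundary of $T_n$ at the cost of transferring a controlled defect further out, to the boundary of $T_{n+1}$. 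One-endedness is exactly what guarantees the required augmenting subgraph can be chosen within the connected annulus without getting stuck, while bipartiteness allows the augmenting operation to preserve the $\{0,1\}$ constraint.

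Measurability is maintained throughout by resolving combinatorial choices via a fixed Borel total ordering of finite subgraphs; each edge stabilizes after finitely many stages, yielding the measurable pointwise limit $\bar\sigma$. For the ``moreover'' clause, I would randomize the choices at each stage through an independent iid labeling of $V(G)$; the resulting random integer matching $\sigma$ has $\mathbb{E}[\sigma(e)] = \tau(e)$, so averaging finitely many independent copies and invoking Mazur's lemma as in the proof of Lemma \ref{lemat} places $\tau$ in the a.e.\ closure of the convex hull of such $\sigma$. The main obstacle is the inductive rounding step: ensuring that the defect can always be propagated outward without producing an obstruction. This is precisely what the combination of one-endedness (the connectedness of toast complements) and bipartite parity (Lemma \ref{subgraph}) is engineered to deliver; without the nowhere-two-ended hypothesis, defects could get trapped between two infinite directions, as Laczkovich's example from \cite{lacz3} shows.
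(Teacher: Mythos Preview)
Your reduction to rounding the fractional part $\{\tau\}$ on $S=\supp(\tau,c)$ is correct and matches the paper's Remark \ref{constant1}. However, the inductive defect-propagation step has a genuine gap. Lemma \ref{subgraph} produces a subgraph with prescribed degree \emph{parities} in a connected finite graph; what you need in the annulus $T_{n+1}\setminus T_n$ is a $\{0,1\}$-valued function with prescribed \emph{exact} vertex sums (the defects at the inner boundary and $k(v)$ at interior vertices), subject to the capacity constraint $0\le\bar\sigma\le 1$. That is a Hall-type feasibility question, not a parity one, and nothing in your sketch rules out a boundary vertex whose defect exceeds its degree into the annulus, or an annular region on which the required $k$-matching simply fails Hall's condition once the inner boundary values are frozen. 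One-endedness gives you connectedness of the complement, which suffices for parity corrections, but it does not by itself supply the extra edges needed to meet hard capacity constraints. The paper's Example \ref{example} already shows that a positive fractional matching on a one-ended graphing need not round, so the existence of $\{\tau\}$ alone cannot be what makes each local step feasible; you must use that $\{\tau\}$ is positive on \emph{all} of $S$, and your argument never invokes this.

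The paper's proof takes a quite different route: it studies the convex set $P_f$ of measurable perfect fractional $f$-matchings, shows (Lemma \ref{extreme}) that extreme points are $\{0,\tfrac12,1\}$-valued with the $\tfrac12$-edges forming a vertex-disjoint union of bi-infinite lines $L(\chi)$, and then drives $\mu(L(\chi))$ to zero by a transfinite iteration. The mechanism for decreasing $\mu(L(\chi))$ is to cover the lines by families of cycles built from the connected toast (Lemmas \ref{1end}--\ref{covering.twolined}) and add random $\pm\varepsilon$-alternating circuits along them (Proposition \ref{successor1}); when $L(\chi)$ is one-lined, a separate augmenting-path analysis (Section \ref{one.lined.section}) is required. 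The toast is used to manufacture cycles, not to propagate defects directly.

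Your ``moreover'' argument is also unjustified: randomizing combinatorial choices via an iid labeling gives no reason for $\mathbb{E}[\sigma(e)]=\tau(e)$, since the set of feasible local roundings need not be symmetric about $\{\tau\}$. The paper obtains the barycenter statement by a delicate bookkeeping (Lemma \ref{successor2}, conditions (iii)--(v) in the proof of Theorem \ref{fmatching}) that only achieves $\|\mathrm{bar}(X)-\tau\|_2\le\delta$ for arbitrary $\delta>0$; exact equality is available only under extra hypotheses (Remark \ref{barycenter.regular}).
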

In particular, Theorem \ref{fmatching} implies that under its assumptions a.e. edge in $\supp(\tau,c)$ belongs to a measurable perfect $f$-matching bounded by $c$ such that $|\sigma-\tau|<1$.

The support is defined above for a given perfect fractional matching. However, there is always an essentially largest such support for a given graphing, as shown in the next lemma.

\begin{lemma}\label{maximal}
Let $G$ be a graphing, $f:V(G)\to\mathbb{N}$ and $c:E(G)\to\mathbb{N}$ be measurable. If there exists a measurable perfect fractional $f$-matching bounded by $c$ then there exists a measurable perfect fractional $f$-matching $\phi_{\max}$ bounded by $c$ such that for every measurable perfect fractional $f$-matching $\psi$
bounded by $c$ the set $\supp(\psi,c)\setminus \supp(\phi_{\max},c)$ is a nullset.
\end{lemma}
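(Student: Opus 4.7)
The plan is to produce $\phi_{\max}$ as an infinite weighted convex combination of a countable family of perfect fractional $f$-matchings whose supports exhaust, up to null sets, the whole family. The entire argument rests on one monotonicity observation: if $\psi_1,\psi_2$ are measurable perfect fractional $f$-matchings bounded by $c$ and $\lambda\in(0,1)$, then $\lambda\psi_1+(1-\lambda)\psi_2$ is again such a matching and $\supp(\lambda\psi_1+(1-\lambda)\psi_2,c)=\supp(\psi_1,c)\cup\supp(\psi_2,c)$. Indeed, on an edge $e$ with $0<\psi_j(e)<c(e)$ for some $j$, the contribution from the $j$-th summand is strictly between $0$ and its maximal share of $c(e)$, while the other summand contributes at most the remaining share, so the total lies strictly in $(0,c(e))$.

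Next I would verify that $\mu$ on $E(G)$ is $\sigma$-finite (for instance by partitioning $V(G)$ into the Borel sets $V_k=\{v:\deg(v)\le k\}$; then each $V_k\times V_{k'}\cap E(G)$ has finite $\mu$-measure). Given $\sigma$-finiteness, the essential supremum $A^\ast=\mathrm{ess\,sup}_{\psi}\supp(\psi,c)$ over all admissible $\psi$ exists and is realised by a countable subfamily, so one obtains measurable admissible matchings $\psi_1,\psi_2,\dots$ with $A^\ast=\bigcup_n\supp(\psi_n,c)$ modulo a null set.

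Finally, I would set $\phi_{\max}=\sum_{n=1}^{\infty}2^{-n}\psi_n$. The series converges pointwise (since each $\psi_n(e)\le c(e)$), the sum is measurable, and both the vertex condition $\sum_{e\ni v}\phi_{\max}(e)=f(v)$ and the bound $\phi_{\max}(e)\le c(e)$ are preserved term by term, so $\phi_{\max}$ is admissible. If $e\in\supp(\psi_n,c)$, then $\phi_{\max}(e)\ge 2^{-n}\psi_n(e)>0$, and combining $2^{-n}\psi_n(e)<2^{-n}c(e)$ with $2^{-m}\psi_m(e)\le 2^{-m}c(e)$ for $m\ne n$ yields $\phi_{\max}(e)<c(e)$. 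Hence $\supp(\phi_{\max},c)\supseteq A^\ast$ modulo null, while the reverse inclusion holds by the definition of $A^\ast$; therefore every admissible $\psi$ satisfies $\supp(\psi,c)\setminus\supp(\phi_{\max},c)=\emptyset$ modulo null.

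I do not foresee a genuine obstacle; the only two points needing care are the $\sigma$-finiteness of $\mu$ (so that the essential supremum is realised by a countable subfamily) and the elementary but critical estimate showing that the weighted sum $\sum 2^{-n}\psi_n$ neither vanishes nor saturates $c(e)$ on any edge already appearing in some $\supp(\psi_n,c)$.
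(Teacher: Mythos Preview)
Your approach is correct and follows essentially the same idea as the paper's proof: build $\phi_{\max}$ as a countable convex combination $\sum_n 2^{-n}\psi_n$ of admissible matchings and use that $\supp(\phi_{\max},c)\supseteq\bigcup_n\supp(\psi_n,c)$. The paper selects the $\psi_n$ greedily (at each step choosing one that captures at least half of the remaining supremum $\sup_\psi\mu(\supp(\psi,c)\setminus\supp(\phi_n,c))$), whereas you invoke the abstract existence of the essential supremum of the family of supports and a countable family realising it. Both arrive at the same place; your route is a bit cleaner, the paper's a bit more explicit.

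Two small points to fix. First, your displayed equality $\supp(\lambda\psi_1+(1-\lambda)\psi_2,c)=\supp(\psi_1,c)\cup\supp(\psi_2,c)$ is false in general: if $\psi_1(e)=0$ and $\psi_2(e)=c(e)$ then $e$ lies in the left side but not the right. Only the inclusion $\supseteq$ holds, and that is all you actually use, so the proof is unaffected. Second, your $\sigma$-finiteness argument via the sets $V_k=\{v:\deg(v)\le k\}$ does not cover $E(G)$ when the graphing has vertices of infinite degree (the paper allows locally countable graphings). A safe replacement is the decomposition $E(G)=\bigcup_n T_n$ into graphs of Borel involutions used in the paper's definition of pmp; each $T_n$ has $\mu$-measure at most~$1$.
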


\begin{proof}
We define a sequence of perfect fractional $f$-matchings in the following way. Start with an arbitrary measurable perfect fractional $f$-matching $\phi_0$.
Assume that given the measurable perfect fractional $f$-matching $\phi_n$ there exists a measurable perfect fractional $f$-matching $\psi$ that does not satisfy the condition of the lemma (with $\phi_n$ replacing $\phi_{\max}$). 

Consider $\sup_{\psi} \mu(\supp(\psi,c) \setminus \supp(\phi_n,c))$, where the supremum is taken over all such perfect fractional $f$-matchings $\psi$,
and pick a $\psi_n$ for which $\mu(\supp(\psi_n,c) \setminus \supp(\phi_n,c))$ is at least the half of this supremum.
Set $\phi_{n+1}=  ( 1-\frac{1}{2^n}) \phi_n + \frac{1}{2^n} \psi_n$.
The sequence $\phi_n$ is uniformly convergent, and the limit $\psi_{\max}$ is a measurable perfect fractional $f$-matching bounded by $c$.
Note that $\supp(\psi_{\max},c) = \bigcup_{n=1}^{\infty} \supp(\phi_n,c)$.
Hence $\psi_{\max}$ satisfies the lemma by the choice of $\psi_n$.
\end{proof}

Finally, as a corollary to Theorem \ref{fmatching}, we can give our characterization of hyperfinite graphings with measurable perfect matchings --- via a reduction to the two-ended case. Note that a measurable perfect fractional matching of essentially maximal support can be found effectively as in the proof of 
Lemma \ref{maximal}.


\begin{cor}\label{characterization}
Let $G$ be a  bipartite hyperfinite graphing,  $f:V(G)\to\mathbb{N}$ be integrable and $c:E(G)\to\mathbb{N}$ be measurable. Let $\tau$ be a measurable perfect fractional $f$-matching $\tau$ with essentially maximal support $\supp(\tau,c)$.
The graphing $G$ admits a measurable perfect fractional $f$-matching a.e. if and only if the two-ended components of $\supp(\tau,c)$ admit a.e. a measurable perfect $f$-matching.
\end{cor}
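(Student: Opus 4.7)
The strategy is to use the essentially maximal support hypothesis to reduce the problem to Theorem \ref{fmatching} applied to the non-two-ended part of $\supp(\tau,c)$. The key initial observation is that any measurable perfect fractional $f$-matching $\psi$ bounded by $c$ must agree with $\tau$ almost everywhere on $E(G) \setminus \supp(\tau,c)$: since $\tau$ takes values in $\{0, c(e)\}$ there, a disagreement on a set of positive $\mu$-measure would make $\tfrac{1}{2}(\tau + \psi)$ a measurable perfect fractional $f$-matching bounded by $c$ whose support strictly contained $\supp(\tau,c)$ modulo null sets, contradicting maximality. Accordingly, define the residual demand
\[
f'(v) := f(v) - \sum_{\substack{e \ni v \\ e \notin \supp(\tau,c)}} \tau(e),
\]
which is $\mathbb{N}$-valued and integrable because $\tau$ is $\{0,c(e)\}$-valued off $\supp(\tau,c)$ and $f$ is integrable.

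For the forward implication, any measurable perfect $f$-matching $\sigma$ of $G$ is in particular a measurable perfect fractional $f$-matching bounded by $c$, so $\sigma = \tau$ a.e.\ on $E(G) \setminus \supp(\tau,c)$. Restricting $\sigma$ to the edges inside the two-ended components of $\supp(\tau,c)$ then gives a measurable perfect $f'$-matching on those components; there is no boundary leakage because every edge of $\supp(\tau,c)$ lies in exactly one component. Conversely, suppose the two-ended components of $\supp(\tau,c)$ admit a measurable perfect $f'$-matching $\sigma_2$. Let $U$ denote the union of the finite and one-ended components of $\supp(\tau,c)$: this is a bipartite hyperfinite subgraphing of $G$, nowhere two-ended by construction, and $\tau|_U$ is a measurable perfect fractional $f'$-matching bounded by $c$ whose support with respect to $c$ coincides with $U$, since every edge of $U$ lies in $\supp(\tau,c)$. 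Theorem \ref{fmatching} applied to $U$ with demand $f'$ thus produces a measurable perfect $f'$-matching $\sigma_1$ on $U$ bounded by $c$. Concatenating $\sigma_1$, $\sigma_2$, and $\tau|_{E(G) \setminus \supp(\tau,c)}$ (which is $\mathbb{N}$-valued and bounded by $c$) yields the desired measurable perfect $f$-matching of $G$.

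The main obstacle to flag is the bookkeeping around the residual demand $f'$ and the need to check that a perfect $f$-matching of $G$ restricted to the two-ended components really does produce a perfect $f'$-matching (and conversely that an $f'$-matching on the pieces reassembles to an $f$-matching on $G$). Essentially maximal support is what makes this bookkeeping work: it rigidifies every perfect fractional $f$-matching bounded by $c$ off $\supp(\tau,c)$, so that the three pieces — $\sigma_1$ on $U$, $\sigma_2$ on the two-ended components, and $\tau$ elsewhere — match up automatically at the boundary vertices without any further compatibility argument.
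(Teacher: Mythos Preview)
Your argument is correct and is precisely the intended deduction from Theorem \ref{fmatching}; the paper states the corollary without proof, and what you wrote is the natural way to fill it in. Your key observation---that essential maximality forces every measurable perfect fractional $f$-matching bounded by $c$ to coincide with $\tau$ off $\supp(\tau,c)$, so that the problem decomposes into the rigid part, the two-ended part, and the nowhere-two-ended part to which Theorem \ref{fmatching} applies---is exactly right, and your bookkeeping with the residual demand $f'$ is the correct way to make ``perfect $f$-matching on the two-ended components'' precise.
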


\begin{remark}\label{constant1}
 For the rest of this paper we will work under the assumption that that $c$ is the constant function $1$, i.e., $|\tau(e)|\leq 1$ for every edge $e$. Proving Theorem \ref{fmatching} under this assumption is sufficient, since for arbitrary measurable $f,\tau,c$ we can consider $f',\tau'$ with $\tau'(e)=\{\tau(e)\}$ and $f'(v)=f(v)-\sum_{e: v\in e}\lfloor\tau(e)\rfloor$. We will also assume that $\supp(\tau)=G$ as we can always replace $G$ with $\supp(\tau)$. Finally, we may also assume that $f(v)\not=0$ for every vertex $v$ as we can work on the induced subgraph on $V'=\{v\in V: f(v)\not=0\}$.
\end{remark}

Let us also comment on the assumption that $f$ is integrable in the results above. The only place when we essentially use this assumption is Section \ref{one.lined.section} (Claim \ref{almostall}). In all other results we can work without this assumption, replacing the (possibly infinite) measure $\mu$ with an equivalent quasi-invariant finite measure $\mu'$ (as in the proof of Lemma \ref{lemat})

When applied to a constant function $f=1$, Theorem \ref{fmatching} gives the existence of measurable perfect matchings in a wide range of hyperfinite one-ended graphings, and Theorem~\ref{matching} from the introduction is a special case of Theorem~\ref{fmatching}. Corollary \ref{regular} follows when applied to the perfect fractional matching that is equal to $\frac{1}{d}$ everywhere, where $d$ is the degree of the regular graphing. We will see some applications of Corollary \ref{regular} in Sections \ref{sec:factor-iid-perfect} and \ref{sec:mcs}. Theorem~\ref{fmatching} can also be applied to find regular spanning subgraphings of arbitrary degree in one-ended regular graphings. This, in particular, implies Corollary~\ref{regular}.

\begin{cor}\label{subgraph toth}
Any $d$-regular hyperfinite one-ended bipartite graphing $G$ admits a $k$-regular spanning subgraphing for every $k<d$. \end{cor}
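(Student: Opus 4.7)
The plan is to apply Theorem \ref{fmatching} to the constant fractional matching that takes the value $k/d$ on every edge. More precisely, I would set $f \equiv k$, $c \equiv 1$, and $\tau(e) = k/d$ for every edge $e \in E(G)$. Since $G$ is $d$-regular, for every vertex $v$ we have $\sum_{e \ni v} \tau(e) = d \cdot (k/d) = k = f(v)$, so $\tau$ is a measurable perfect fractional $f$-matching. Moreover $f$ is a constant, hence integrable, and because $0 < k/d < 1$ we have $\supp(\tau, c) = E(G)$.

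Next I would verify that the hypotheses of Theorem \ref{fmatching} on the support are met: by assumption $G$ is bipartite, hyperfinite, and one-ended, so in particular $\supp(\tau,c) = G$ is hyperfinite and nowhere two-ended. Theorem \ref{fmatching} then produces a measurable perfect $f$-matching $\sigma : E(G) \to \mathbb{N}$ bounded by $c \equiv 1$. Since $\sigma$ is integer-valued and bounded by $1$, it is the indicator function of some measurable subset $F \subseteq E(G)$, and the $f$-matching condition $\sum_{e \ni v} \sigma(e) = k$ says exactly that every vertex of $G$ has degree $k$ in the spanning subgraphing $(V(G), F)$. Thus $F$ is the desired $k$-regular measurable spanning subgraphing.

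There is no real obstacle here beyond invoking Theorem \ref{fmatching}: the technical work is entirely absorbed into that theorem. The only thing worth double-checking is the convention from Remark \ref{constant1}, namely that Theorem \ref{fmatching} as stated is proved under $c \equiv 1$ and $\supp(\tau) = G$, both of which hold in our setup. Integrability of $f$ is immediate from $f$ being a bounded constant, and $f \neq 0$ everywhere since we assume $k \geq 1$ (for $k = 0$ the claim is trivial with the edgeless subgraphing).
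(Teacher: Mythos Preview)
Your proposal is correct and matches the paper's own proof essentially verbatim: the paper also sets $\tau=\frac{k}{d}$ on every edge, $f=k$ on every vertex, and applies Theorem~\ref{fmatching}. Your write-up simply spells out the verifications (that $\tau$ is a perfect fractional $f$-matching, that $\supp(\tau,c)=E(G)$ is hyperfinite and nowhere two-ended, and that the resulting $\{0,1\}$-valued $\sigma$ is the indicator of a $k$-regular spanning subgraphing) which the paper leaves implicit.
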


\begin{proof}
Consider the functions $\tau=\frac{k}{d}$ on every edge, $f=k$ on every vertex and apply Theorem~\ref{fmatching}.
\end{proof}

We can also use Theorem \ref{fmatching} to obtain a measurable \textit{balanced orientation} in a regular one-ended graphing of even degree, i.e., an orientation such that the in-degree equals the out-degree for every vertex (cf. \cite{bht}).
In \cite[Theorem 1.5]{thornton2022orienting} Thornton showed that any $d$-regular Borel graph admits a Borel orientation for which the out-degree of every vertex is in $[\frac{d}{2},\frac{d}{2}+1]$. In case of regular graphings of even degree $d$, Theorem \ref{fmatching} implies the existence of a measurable orientation, for which the out-degree of a.e. vertex is $\frac{d}{2}$, so that it is a balanced orientation.
\begin{cor}\label{balanced}
Consider a hyperfinite one-ended graphing $G$. Assume that every vertex has even degree. Then there exists a measurable balanced orientation of the edges. 
\end{cor}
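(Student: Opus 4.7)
The plan is to reduce the corollary to Theorem~\ref{fmatching} by encoding balanced orientations as perfect $f$-matchings in an auxiliary bipartite graphing obtained from $G$ by subdividing each edge. Concretely, I would let $H$ be the graphing with vertex set $V(H) = V(G) \sqcup E(G)$ and edges $\{v,e\}$ whenever $v$ is an endpoint of $e$ in $G$; equip $V(H)$ with a probability measure obtained by renormalising $\nu + \mu_G$ (replacing $\mu_G$ by an equivalent finite measure on the edge side, as in the proof of Lemma~\ref{lemat}, in case the edge measure of $G$ is infinite). Then $H$ is bipartite with parts $V(G)$ and $E(G)$; since every edge of $G$ is replaced by a length-two path through its subdivision vertex, $H$ is a bounded subdivision of $G$ along each component, which preserves both hyperfiniteness and the number of ends. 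Hence $H$ is hyperfinite and one-ended.

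Next I would set $f(v) = d_G(v)/2$ for $v \in V(G)$ (well-defined and $\mathbb{N}$-valued by the even-degree hypothesis) and $f(e) = 1$ for $e \in E(G)$, and take $\tau \equiv \tfrac{1}{2}$ on every edge of $H$. The matching equations are immediate: for $v \in V(G)$ we have $\sum_{e \ni v} \tau(v,e) = d_G(v)\cdot \tfrac{1}{2} = f(v)$, and for $e = \{u,v\} \in E(G)$ we have $\tau(u,e) + \tau(v,e) = 1 = f(e)$. Thus $\tau$ is a measurable perfect fractional $f$-matching on $H$ with $\supp(\tau) = E(H)$, so the support is hyperfinite and nowhere two-ended and Theorem~\ref{fmatching} applies, producing a measurable perfect $f$-matching $\sigma : E(H) \to \{0,1\}$.

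Finally, I would read the balanced orientation off $\sigma$: for every edge $e = \{u,v\}$ of $G$, the constraint $\sigma(u,e) + \sigma(v,e) = f(e) = 1$ forces exactly one endpoint, call it $h(e)$, to satisfy $\sigma(h(e), e) = 1$. Orient each edge $e$ towards $h(e)$. Then the in-degree of any vertex $v \in V(G)$ equals $\sum_{e \ni v}\sigma(v,e) = f(v) = d_G(v)/2$, i.e.\ half of its degree, so the orientation is balanced. Measurability of the orientation follows from that of $\sigma$.

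The main obstacle I anticipate is the measure-theoretic book-keeping: verifying carefully that $H$ inherits hyperfiniteness and one-endedness from $G$, and setting up the probability measure on $V(H)$ and the function $f$ so that the integrability hypothesis of Theorem~\ref{fmatching} is met. Once this is in place, the reduction is essentially mechanical; in particular no new combinatorial ideas are needed beyond recognising that orientations on $G$ correspond to perfect $f$-matchings on its subdivision.
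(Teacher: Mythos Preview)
Your proposal is correct and is essentially identical to the paper's proof: the paper also passes to the barycentric subdivision $G'$ with $V(G')=V(G)\cup E(G)$, defines $f(e)=1$ and $f(x)=\tfrac{1}{2}\deg_G(x)$, applies Theorem~\ref{fmatching} to the constant $\tfrac{1}{2}$ fractional $f$-matching, and reads off the orientation from the resulting integral $f$-matching. The only difference is that you flag the measure-theoretic bookkeeping (the probability measure on $V(H)$ and integrability of $f$) explicitly, whereas the paper simply asserts that $G'$ is a hyperfinite one-ended graphing.
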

\begin{proof}

Consider the barycentric subdivision $G'$ of $G$, that is, $V(G')=V(G) \cup E(G)$, and $E(G')=\{ (x,e): x \in V(G),\ e \in E(G),\ x \text{ is an endvertex of } e \}$. Note that $G'$ is a hyperfinite one-ended graphing. Consider the functions $f: V(G') \rightarrow \mathbb{N}$ defined as $f(e)=1$ for every $e \in E(G)$, $f(x)=\frac{1}{2}\mathrm{deg}_G(x)$ for every $x \in V(G)$. 
The constant half function on $E(G')$ is a fractional measurable perfect $f$-matching. Hence, by Theorem \ref{fmatching} there exists an  measurable perfect $f$-matching $\phi$ bounded by $1$. Consider the following orientation: an edge $(x,y)$ is oriented towards $x$ if $\phi ((x,(x,y)))=1$. This is a balanced orientation.
\end{proof}

In particular, this answers the question of Bencs, Hru\v{s}kov\'a and T\'oth \cite[Question 6.4]{bht}, as it shows that every unimodular vertex-transitive graph of even degree without a factor of iid balanced orientation is quasi-isometric to $\mathbb{Z}$. A factor of iid is defined for unimodular graphs (see \cite[Section 2.7]{bht}), 
and in the non-amenable transitive unimodular case, a factor of iid balanced orientation exists by \cite[Theorem 1]{bht1}, while in the amenable one-ended case it exists by Corollary \ref{balanced}. So this leaves only the two-ended graphs, which are quasi-isometric to $\mathbb{Z}$ by \cite[Corollary 3.13]{two-ended}.

\section{Rounding perfect fractional matchings and the matching polytope}
\label{sec:fract-perf-match}

If a finite bipartite graph  admits a perfect fractional matching, then it admits a perfect matching. A very elegant proof of this fact, due to Edmonds, is obtained by looking at the so-called \textit{perfect fractional matching polytope} of all perfect fractional matchings and noticing that an extreme point of this polytope must be a perfect matching. The latter follows from the fact that for an extreme point $\varphi$ of the matching polytope the set $\mathrm{supp}(\varphi)$ must be acyclic. In this section, we exploit this idea in the context of measurable perfect fractional matchings.

\begin{definition}
Given an even cycle $C$ in a graph with a distinguished edge and $\varepsilon \in \mathbb{R}$, an 
\textit{alternating $\varepsilon$-circuit} on $C$ is a function that is equal  to $\varepsilon$ on the even edges and $-\varepsilon$ on the odd edges (even and odd edges with respect to their distance from the distinguished one). 
\end{definition}
First, the simple idea of constructing a measurable perfect matching $\varphi$ for which $\supp(\varphi)$ is acyclic leads to a short proof of Theorem \ref{odd}.

In the proof we use the following standard notation: given a sequence $s\in\{-1,1\}^n$, we write $s^\frown1$ and $s^\frown-1$ for the concatenations of $s$ with $1$ and $-1$, respectively. For  $t\in\{-1,1\}^\mathbb{N}$ we write $t|n$ for $(t(0),\ldots,t(n-1))\in\{-1,1\}^n$
\begin{proof}[Proof of Theorem \ref{odd}]

We construct perfect fractional matchings $\tau_s$ for every $s\in\bigcup_{n=1}^\infty \{-1,1\}^n$ such that each $\tau_s$ takes values in $\{0,\frac{1}{d},\frac{2}{d},\ldots,1\}$ on every edge. We start with the empty sequence $s=
\emptyset$ and put $\tau_\emptyset$ to be the perfect fractional matching that takes the value $\frac{1}{d}$ on every edge. 

Suppose we have all $\tau_s$ for $s\in\{-1,1\}^n$.   For each such $s$, using 
\cite[Proposition 4.5]{kst} we can find a Borel set $\mathcal{C}_s$ of disjoint cycles (each with a distinguished edge) whose edges are contained in $\mathrm{supp}(\tau_s)$. 
For each $s\in\{-1,1\}^n$ we choose a Borel set $\mathcal{C}_s$ such that $\mu(\bigcup \mathcal{C}_s)$ is equal to at least half of the supremum of the measures of $\bigcup \mathcal{C}_s$ for all possible such $\mathcal{C}_s$.

Next, we define $\tau_{s^\frown1}$ and $\tau_{s^\frown-1}$.  To define $\tau_{s^\frown1}$ we add an alternating $\frac{1}{d}$-circuit 
to every cycle  in $\mathcal{C}_s$, and to define $\tau_{s^\frown-1}$ we subtract an alternating $\frac{1}{d}$-circuit 
to every cycle  in $\mathcal{C}_s$.

Note that 
\begin{equation}\label{norm}
\| \tau_{s^\frown1} \|^2_2 + \| \tau_{s^\frown-1} \|^2_2 = 2 \| \tau_s \|^2_2 + 
\frac{2}{d^2} \mu(\bigcup \mathcal{C}_s).
\end{equation}
Consider  the probability measure space on $\{-1,1\}^\mathbb{N}$ with the symmetric flip-coin measure and the random variables  $\|\tau_{t|n}\|_2$ and $\mu(\bigcup \mathcal{C}_{t|n})$. Then by (\ref{norm}) we have $$\mathbb{E}_t \| \tau_{t|n+1} \|^2_2 = \mathbb{E}_t \| \tau_{t|n} \|^2_2 +  \frac{1}{d^2} \mathbb{E}_t\mu(\bigcup \mathcal{C}_{t|n}).$$

Since each $\tau_s$ is bounded by $1$, the norms $\| \tau_s \|^2_2$ are bounded  by $\mu(E(G))=d$ and hence we get $$\mathbb{E}_t\sum_{n=1}^\infty \mu(\bigcup\mathcal{C}_{t|n})\leq d^3<\infty,$$ which means that for a.e. $t$ the sum $\sum_{n=1}^\infty \mu(\bigcup\mathcal{C}_{t|n})$ is finite. By the Borel--Cantelli lemma a.s. the value at a.e. edge will change only finitely many times in the sequence $\tau_{t|n}$. Write $\tau_t$ for the limit and note that it is a.e. a measurable perfect fractional matching.

A.s. the set $\supp(\tau_t)$ 
is essentially acyclic. Namely, we claim that $\supp(\tau_t)$ is acyclic whenever the sum $\sum_{n=1}^\infty \mu(\bigcup\mathcal{C}_{t|n})$ is finite. Indeed, otherwise we could find a family of pairwise disjoint cycles in $\supp(\tau_t)$ whose union would have a positive measure. But this would contradict that $\lim_{n\to\infty}\mu(\bigcup\mathcal{C}_{t|n})=0$, by our choice of $\mathcal{C}_{t|n}$ close to the optimum.

A.s. in a.e. component of $G$ the set $\supp(\tau_{t})$ is a tree and has no leaves. By hyperfiniteness, the average degree of vertices in $\supp(\tau_{t})$ is equal to $2$, so since it has no leaves, $\supp(\tau_{t_0})$ must be a disjoint union of lines in a.e. component. We know that $\tau_{t}$ has values which are rationals with denominator $d$, hence on each line these values alternate between $\frac{k}{d}$ and $\frac{d-k}{d}$ for some natural number $k$.  
Since $d$ is odd, this gives us a measurable choice of a perfect matching for a.e. such line and changing $\tau_{t}$ to such a measurable perfect matching on the lines gives a measurable perfect matching on the whole graphing $G$.

\end{proof}

The next several sections will be devoted to the proof of Theorem \ref{fmatching}. To this end, unless stated otherwise, we fix a hyperfinite bipartite one-ended graphing $G$ on $(V(G),\nu)$ and given 
an integrable $f:V(G)\to\mathbb{N}$ we consider the following set 



\begin{align*}
P_f=\{ \sigma\in L^2(E(G)): \sigma \text{ is a perfect fractional $f$-matching bounded by }1 \}
\end{align*}

Clearly, $P_f$ is a convex subset of $L^2(E(G))$. 
It is bounded, as for $\sigma\in P_f$, we have $\|\sigma\|_2^2\leq\|\sigma\|_1\|\sigma\|_\infty$ by the H\"older inequality and the fact that $\|\sigma\|_1=\frac{1}{2}\|f\|_1$. 
Note that $P_f$ is closed in the weak$^*$ topology of $L^2(E(G))$. Hence $P_f$ is compact (as a bounded subset of $L^2(E(G))$), separable and metrizable in the weak$^*$-topology. 


The Krein--Milman theorem implies that $P_f$ is a convex closure of the set of extreme points, in particular, it contains an extreme point. While for finite graphs $G$, any extreme point of $P_f$ must be a perfect matching, in the context of graphings the situation turns out to be a bit more subtle. In general, extreme points of $P_f$ do not need to be a.e. integral, as shown in Example \ref{example}. However, they have a nice structure, as we will see below.

 

\begin{lemma}\label{extreme}
Suppose that $G$ is a bipartite hyperfinite graphing, $f:V(G)\to\mathbb{N}$ is an integrable function 
and $\sigma \in P_f$ is an extreme point. Then
for a.e. $e \in E(G)$ we have $$\sigma(e)\in\{0,\frac{1}{2},1\},$$ and the set $\{e\in E(G):\sigma(e)=\frac{1}{2}\}$ 
is essentially a vertex-disjoint union of bi-infinite paths.
\end{lemma}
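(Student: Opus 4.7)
The plan is to exploit extremality via the following mechanism: if the conclusion fails, I will exhibit a bounded Borel \emph{circulation} $h \colon E(G) \to \mathbb{R}$, supported in $D := \supp(\sigma) = \{e \in E(G) : 0 < \sigma(e) < 1\}$, with $\sum_{e \ni v} h(e) = 0$ for a.e.\ $v$ and $|h(e)| \leq \min(\sigma(e), 1 - \sigma(e))$, and with $h \not\equiv 0$. Then $\sigma \pm h \in P_f$ are two distinct perfect fractional $f$-matchings averaging to $\sigma$, contradicting extremality.

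The first step is to show that $D$ is essentially acyclic. Otherwise, \cite[Proposition~4.5]{kst} applied inside $D$ yields a positive-measure Borel family of pairwise vertex-disjoint cycles; after restricting to a subfamily whose edges all have values in $[\delta, 1-\delta]$ for some $\delta > 0$, an alternating $\delta$-circuit on each cycle --- well-defined because $G$ is bipartite and hence every cycle is even --- gives the desired $h$. Second, every $v \in V(D)$ must satisfy $\deg_D(v) \geq 2$: if $v$ had a unique incident fractional edge $e$, then $\sigma(e) = f(v) - \sum_{e' \ni v,\ \sigma(e') \in \{0,1\}} \sigma(e')$ would be an integer, contradicting $\sigma(e) \in (0,1)$. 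Thus $D$ is essentially a forest with minimum degree at least $2$.

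Next I will rule out branching. Suppose $B := \{v : \deg_D(v) \geq 3\}$ has positive measure, and at each $v \in B$ measurably choose three incident fractional edges $e_1, e_2, e_3$. Since $D$ is an a.e.\ forest without leaves, removing $v$ from its component leaves at least three infinite subtrees, one through each $e_i$. Using hyperfiniteness of $G$ (together with the one-ended structure implicit in the section's standing assumption), I select measurably two disjoint infinite rays $R_1, R_2$ in $D$ starting at $v$ through $e_1$ and $e_2$, in such a way that the rays chosen from different branching vertices do not interfere. Set $h$ to $+\varepsilon$ on $e_1$ alternating $\pm\varepsilon$ along $R_1$, $-\varepsilon$ on $e_2$ alternating along $R_2$, and $0$ elsewhere. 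Alternation forces $\sum_{e \ni u} h(e) = 0$ at every interior vertex of each ray, and at $v$ the contributions $+\varepsilon - \varepsilon$ cancel. Restricting to the positive-measure subset where the edge values of the chosen rays lie in $[\delta, 1-\delta]$ and taking $\varepsilon < \delta$ ensures feasibility; this yields a nonzero circulation, a contradiction. Hence $D$ is essentially $2$-regular, i.e., an essentially vertex-disjoint union of bi-infinite paths.

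On any such bi-infinite path the internal vertex equation forces two consecutive fractional values to sum to an integer in $(0,2)$, hence to $1$, so along the path the values alternate between some $\alpha$ and $1 - \alpha$. If $\alpha \neq 1/2$ on a positive-measure subfamily of paths, taking $\delta > \varepsilon > 0$ small and defining $h(e) = \varepsilon \cdot \mathrm{sgn}(\sigma(e) - 1/2)$ on $\{e : \sigma(e) \in [\delta, 1-\delta] \setminus \{1/2\}\}$ (and zero elsewhere) gives a measurable circulation: since $\alpha$ and $1-\alpha$ lie on opposite sides of $1/2$, the sign of $h$ alternates along each path, so the vertex sums vanish. This contradicts extremality, giving $\sigma \equiv 1/2$ on $D$ and the structural conclusion. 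The main obstacle is the measurable ray selection in the branching step; I expect to handle it via the Borel exhaustion coming from hyperfiniteness, combined with Borel enumerations of incident edges and the one-ended structure of $G$ to make the ray choices canonical and non-interfering.
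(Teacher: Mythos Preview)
Your acyclicity and no-leaves steps match the paper's argument, and your final step forcing the value $1/2$ on the lines is essentially the same as well. The genuine gap is the branching step.

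You try to rule out vertices of degree $\geq 3$ in $D$ by a direct circulation construction: from each branching vertex, select two infinite rays in $D$ and alternate $\pm\varepsilon$ along them. You correctly identify this as ``the main obstacle,'' and the resolution you sketch does not work. First, the lemma as stated assumes only that $G$ is bipartite and hyperfinite; you are not entitled to one-endedness here. Second, and more seriously, from a positive-measure set $B$ of branching vertices you cannot in general select rays that are \emph{non-interfering}: a ray from one branching vertex will typically pass through other branching vertices and share edges with their rays, so the contributions of your $h$ will not simply add to zero at those vertices. Third, your feasibility fix (``restricting to the positive-measure subset where the edge values of the chosen rays lie in $[\delta,1-\delta]$'') cannot be done edge-by-edge: truncating a ray destroys the vertex condition at the truncation point, and there is no reason a given ray has \emph{all} its edge-values uniformly bounded away from $0$ and $1$.

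The paper's argument bypasses all of this with a one-line observation: once $D$ is an essentially acyclic subgraphing of a hyperfinite pmp graphing, its average degree is at most $2$ (this is the standard cost/mass-transport fact that a hyperfinite treeing has cost at most $1$). Since you have already shown $\deg_D(v)\geq 2$ for every $v\in V(D)$, the average-degree bound forces $\deg_D(v)=2$ a.e., so $D$ is a vertex-disjoint union of bi-infinite paths. Replace your ray construction with this, and the proof goes through.
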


\begin{proof}
First we show that the set of edges, where $\sigma$ is not integral, is essentially acyclic. 
Suppose that the set of edges that belong to a cycle on which $\sigma$ is not integral has a positive measure. We can assume that the measure is finite and positive and pass to a locally finite  subgraphing spanned by cycles on which $\sigma$ is not integral such that the measure of the edges in this subgraphing is positive. Using a Borel coloring of the intersecting cycles \cite[Proposition 4.5]{kst}, we would find an $\varepsilon>0$ and a set of pairwise disjoint cycles of positive measure such that we could add or subtract alternating $\varepsilon$-circuits to $\sigma$ on these disjoint cycles and still be in $P_f$. However, this is not possible for an extreme point. Thus, the set of non-integral edges of $\sigma$ is essentially acyclic.

Note that there is no vertex covered by exactly one edge where $\sigma$ is not integral. Since the set of non-integral valued edges is acyclic and $G$ is hyperfinite, the subgraph spanned by these edges should have a.e. degree two or zero. Hence, the set of edges, where $\sigma$ is not integral is essentially a vertex-disjoint union of bi-infinite paths. 

On every such path $\sigma$ alternates between two values. Consider the set of paths where $\sigma$ is not half. If the set of such paths had a positive measure we could add or subtract for a possibly smaller set of paths of positive measure and an $\varepsilon > 0$ an alternating $\pm\varepsilon$-valued measurable function to $\sigma$ and still stay in $P_f$, which is not possible for an extreme point. This completes the proof of the lemma.
\end{proof}

Throughout this paper, given a graphing $G$ and an integrable  $f:{V(G)}\to\mathbb{N}$ 
we write $$R_f=\{\chi\in P_f: \chi(e)\in\{0,\frac{1}{2},1\} \mbox{ for a.e. } e\in E(G)\}.$$
Note that by Lemma \ref{extreme}, the extreme points of $P_f$ are in $R_f$. For $\chi \in R_f$ 
set $$L(\chi)=\{e\in E(G):\chi(e)=\frac{1}{2}\}.$$

Later, we will prove that given a measurable perfect fractional $f$-matching $\chi\in R_f$, we can find an extreme point $\chi'\in P_f$ such that $L(\chi')$ has smaller measure than $L(\chi)$ if the latter has positive measure. 


\section{Covering lines with cycles via connected toasts}
\label{sec:tilings}
 In this section we start by constructing special tilings of one-ended hyperfinite graphings. We then use them to construct families of cycles in the graphings which cover given families of lines.  
 
 The construction of the tilings uses recent results of Tim\'ar \cite{timar} and Conley, Gaboriau, Marks and Tucker-Drob \cite{cgmtd} on measurable one-ended trees, expanding on prior work of Benjamini, Lyons, Peres, and Schramm \cite{blps}. The following definition refines the notion of a toast structure (see \cite[Definition 2.9]{gjks}, \cite[Definition 4.1]{gjks.forcing}), coined by Miller and motivated by the work of Conley and Miller \cite{conley.miller.toast} (in particular, a \textit{toast structure} is any tiling that satisfies Properties (1) and (2) of the definition below).

\begin{definition}\label{toast}
  Given a Borel graph $G$, we say that a Borel collection $\mathcal{T}$ of finite connected subsets of $V(G)$ is a \textit{connected toast structure} if it 
  satisfies

\begin{enumerate}
    \item{$\bigcup_{K\in \mathcal{T}}E(K)=E(G)$,} 

    \item{for every pair $K, L \in \mathcal{T}$ either $(K\cup N(K)) \cap L = \emptyset$ or $K \cup N(K) \subseteq L$, or }$L \cup N(L) \subseteq K$,


    \item{for every $K \in \mathcal{T}$ the induced subgraph on $K \setminus \bigcup_{K \supsetneq L \in \mathcal{T}} L$ is connected.}
    
\end{enumerate}
For a graphing $G$, we say that it \textit{admits a connected toast structure a.e.} if there exists a Borel co-null set $V'\subseteq V$ such that there exists a connected toast structure for the graph $G|V'$.

\end{definition}

Note that if $G$ admits a connected toast structure then it must also admit a Borel one-ended (component-wise) spanning tree. In the other direction, we may use one-ended spanning trees to construct connected toast structure a.e. 

\begin{prop}\label{tiling}
Any locally finite, one-ended hyperfinite graphing $G$ admits a connected toast structure a.e. 
\end{prop}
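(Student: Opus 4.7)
The plan is to build the toast structure $\mathcal{T}$ from two ingredients, a Borel one-ended spanning forest $F$ of $G$ and a Borel hyperfinite exhaustion, by iteratively applying the $F$-hull operation to $E_n$-classes together with the $G$-neighborhoods of previously built tiles. For a finite set $A$ in a single $G$-component, define the \emph{$F$-hull} $\mathrm{Hull}_F(A)$ as the $F$-convex hull $T_A$ together with all finite $F$-components of $V\setminus T_A$. Since $F$ is one-ended, $\mathrm{Hull}_F(A)$ is finite and $F$-connected (hence $G$-connected), and its complement in the component is a single infinite $F$-component attached to the hull at a single boundary vertex. The essential geometric observation is that the collection of $F$-hulls of finite sets in a one-ended tree is \emph{laminar}: every $F$-hull equals $D(\tau) := \{v : v \leq_F \tau\}$ for some vertex $\tau$ (namely $\tau = \mathrm{lca}_F(A)$), where $v \leq_F w$ means $w$ lies on the unique $F$-ray from $v$ to the end; consequently any two such sets are either nested or disjoint, depending on the $\leq_F$-comparability of the two $\tau$'s.

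The first step is to invoke the results of Tim\'ar and Conley--Gaboriau--Marks--Tucker-Drob cited in the paper to obtain a Borel one-ended spanning subforest $F \subseteq G$ with the same connected components on a conull Borel subset of $V$, using that $G$ is locally finite, one-ended, and hyperfinite. I then fix a Borel witness $E_1 \subseteq E_2 \subseteq \cdots$ of hyperfiniteness with finite classes and $\bigcup_n E_n = E_G$. For each $n \geq 1$ and each $E_n$-class $C$, I inductively define
\[
K_n(C) \;=\; \mathrm{Hull}_F\Bigl( C \,\cup\, \bigcup_{C' \subseteq C} \bigl(K_{n-1}(C') \cup N_G(K_{n-1}(C'))\bigr)\Bigr),
\]
where the inner union ranges over $E_{n-1}$-classes $C'$ contained in $C$ and $K_0 \equiv \emptyset$. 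Local finiteness of $G$ ensures that the set inside the hull is finite, and one-endedness of $F$ ensures each $K_n(C)$ is finite and $G$-connected; the construction is manifestly Borel. I then set $\mathcal{T} = \{K_n(C) : n \geq 1,\ C \in V/E_n\}$.

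Finally I would verify the three toast properties. Property (1) is immediate: any $G$-edge has both endpoints in a common $E_n$-class for sufficiently large $n$, and hence in the corresponding tile. Property (3) uses the $D(\tau)$-description: each $K_n(C)$ equals $D(\tau_n(C))$ for a suitable $\tau_n(C)$, and the tiles strictly contained in it are themselves of the form $D(\tau)$ with $\tau <_F \tau_n(C)$, so their union is a downward-closed subset of $D(\tau_n(C))$ whose complement is an $F$-connected shell rooted along the boundary. The main obstacle is property (2): although laminarity shows that any two tiles in a common $G$-component are either nested or disjoint as sets, one must upgrade this to nesting (respectively disjointness) of closed $G$-neighborhoods. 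The $N_G$-inflation built into the recursive definition is designed for exactly this: $N_G(K_{n-1}(C'))$ is absorbed into $K_n$ of the $E_n$-class containing $C'$, and a routine induction using that the $E_n$-classes themselves form a laminar family across levels shows that whenever $K_m(C_2)$ contains $K_n(C_1)$ it also absorbs $N_G(K_n(C_1))$. After possibly passing to a Borel refinement of the exhaustion ensuring each $E_n$-class is ``$F$-hull-saturated'' so that distinct $E_n$-classes within a single $G$-component produce incomparable (and hence disjoint) downward sets, this completes the construction of $\mathcal{T}$.
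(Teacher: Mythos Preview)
Your approach shares the key starting ingredient with the paper---a one-ended spanning tree $F$ and the observation that $F$-hulls are the laminar sets $D(\tau)$---but the verification of property~(2) has a genuine gap that the paper resolves by a different mechanism.

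The problem is with tiles coming from $E_n$-classes that are \emph{not} nested. Take two distinct $E_n$-classes $C_1,C_2$ in the same $G$-component. Their hulls $K_n(C_1)=D(\tau_1)$ and $K_n(C_2)=D(\tau_2)$ are either nested or disjoint, but in neither case do you get property~(2) for free. If they are disjoint as $D(\tau)$-sets, nothing prevents them from being $G$-adjacent, so $(K_n(C_1)\cup N(K_n(C_1)))\cap K_n(C_2)$ may be nonempty. If they are nested, say $D(\tau_1)\subsetneq D(\tau_2)$, you need $N_G(K_n(C_1))\subseteq K_n(C_2)$, but the $N_G$-inflation in your recursion only pushes neighborhoods \emph{upward along the chain of $E_k$-classes containing $C_1$}, not into $C_2$. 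The same issue arises across levels whenever $C_1\not\subseteq C_2$. Your ``routine induction'' therefore does not prove the claimed implication, and the proposed ``$F$-hull-saturated refinement'' is undefined and in any case does not address $G$-adjacency of disjoint hulls.

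The paper avoids this by not using an external hyperfinite exhaustion at all. It works entirely inside the tree: it chooses height thresholds $n_1<n_2<\cdots$ so that maximal tiles of height in $[n_i,n_{i+1})$ cover most vertices and are mostly covered (in the sense $K\cup N(K)\subseteq L$) by tiles of the next range; a Borel--Cantelli argument then shows a.e.\ vertex lies in infinitely many retained tiles. Crucially, the paper then \emph{explicitly glues} any remaining pairs of disjoint-but-$G$-adjacent tiles via the equivalence relation generated by that adjacency, and checks that the glued sets stay finite because each retained tile is already covered by a tile one level up. This gluing step is exactly what is missing from your argument; adding it (and checking finiteness of the glued sets, which requires something like the paper's ``covered by the next level'' condition) would repair your construction.
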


\begin{proof}

Any hyperfinite one-ended graphing contains a.e. a measurable one-ended spanning tree with the same connected components as the graphing by \cite[Lemma 2.10]{cgmtd}\footnote{In case of random amenable one-ended unimodular graphs, this was proved in \cite[Corollary 2]{timar}}. Let $T$ be such a tree for $G$.
We say that a vertex $v$ is of \textit{height $n$ in $T$} if the maximum downward directed path in $T$ starting from $v$ is of length $n$. We call a finite set $L$ a \textit{tile of height $n$} if it consists of a vertex of height $n$ and the vertices below it in $T$.
Further, we say that $K$ is \textit{covered by $L$} if $K \cup N(K) \subseteq L$.

First, we construct an increasing sequence of integers $n_1<n_2<\ldots$ and a sequence $\mathcal{T}_1, \mathcal{T}_2,\ldots$ of families of tiles such that 

\begin{enumerate}
    \item[(i)] the height of every tile in $\mathcal{T}_i$ is at least $n_i$ and less than $n_{i+1}$,
    \item[(ii)] $\nu(\bigcup{\mathcal{T}_i})>1-2^{-i}$ for every $i$,
    \item[(iii)] for every $i$, the total measure of the tiles in $\mathcal{T}_i$ which are not covered by a tile in $\mathcal{T}_{i+1}$ is at most $2^{-i}$. 
\end{enumerate}

Towards this, observe that for any finite connected $C\subseteq V(G)$ there are infinitely many integers $m\in\mathbb{N}$ such that $C$ is covered by a tile of height $m$.
Having defined $n_0,\ldots,n_j$ and $\mathcal{T}_1,\ldots,\mathcal{T}_j$ satisfying the desired properties (i)---(iii), we can find an $n_{j+1}>n_j$ such that 
\begin{itemize}
    \item all but an arbitrary small proportion of the tiles in $\mathcal{T}_j$ are covered by tiles of height at least $n_j$ and less than $n_{j+1}$
     \item all but an arbitrary small proportion of the vertices are contained by tiles of height at least $n_j$ and less than $n_{j+1}$
\end{itemize}
Let $\mathcal{T}_{j+1}$ consist of the maximal tiles of height at least $n_j$ and less than $n_{j+1}$.

Now, we define $\mathcal{T}'$ as follows. A tile $K$ belongs to $\mathcal{T}'$  if there exists $i\in\mathbb{N}$ such that $K \in \mathcal{T}_i$ and $K$ is covered by a tile in $\mathcal{T}_{i+1}$

First, note that almost every vertex is contained in some tile of $\mathcal{T}'$. Property (ii) of the construction and the Borel--Cantelli lemma show that a.e. vertex is contained by a tile in $\mathcal{T}_i$ for all but finitely many $i$. Property (iii) of the construction and the Borel--Cantelli lemma imply that for a.e. vertex all but finitely many tiles containing it are in $\mathcal{T}'$.

For every edge $e$ there is a vertex $v$ such that the endvertices of $e$ are contained by a tile if and only if $v$ is in the tile. And for every vertex there are only finitely many such edges, hence (1) of the definition is satisfied for $\mathcal{T}'$. 

To obtain the connected toast strucutre, we will modify $\mathcal{T}'$ by gluing some tiles together, in order to ensure that property (2) is satisfied. More precisely, let $\mathcal{R}$ be the relation that relates tiles $K$ and $L$ if $K\cap L=\emptyset,$ but $(N(K)\cup K)\cap L\neq \emptyset,$ and let $\sim$ be the smallest equivalence relation on the set of tiles that contains  $\mathcal{R}$. We put  $\mathcal{T}=\{\bigcup[K]_\sim: K\in \mathcal{T}'\}$ and we claim that $\mathcal{T}$ is a connected toast structure.

The property (1) is satisfied for $\mathcal{T}$ since it was satisfied for $\mathcal{T}'$.
Note that any two tiles $K,L\in\mathcal{T}'$ are either disjoint or one contains the other. 
By construction, if a tile in $\mathcal{T}'$ contains another then it also covers it.  Moreover, if $L$ covers $K$ then also $L$ covers $\bigcup[K]_\sim$. This implies that each element of $\mathcal{T}$ is finite as well as the property (2) of the definition of a connected toast structure. 
The property (3) of the definition is satisfied because if $K\in\mathcal{T}$, then $K\setminus \bigcup_{K \supsetneq L \in \mathcal{T}} L= K\setminus \bigcup_{K \supsetneq L \in \mathcal{T'}} L$ is connected by the edges of the tree $T$.

\end{proof}



In the remaining part of this section, we construct families of cycles in a graphing that cover a family of lines in that graphing. In particular, given an extreme point $\chi\in P_f$, this will allow us to cover the lines of $L(\chi)$ with cycles. We will use this in the next sections to decrease the measure of $L(\chi)$ until it becomes zero.


\begin{lemma} \label{1end} 
Consider a hyperfinite one-ended graphing $G$ and a subgraphing $L$ on $(X,\mu)$. Assume that $L$ is the vertex-disjoint union of bi-infinite lines. 

Then for every $\varepsilon>0$ and $k \in \mathbb{N}$
there exist Borel families  $\mathcal{C}_1, \dots ,\mathcal{C}_k$, each consisting of pairwise edge-disjoint cycles such that 
\begin{itemize}
    \item every edge not in $L$ is covered by at most one cycle in $\bigcup_{i=1}^k\mathcal{C}_i$, 
    \item $\mu(\bigcap_{i=1}^k((\bigcup\mathcal{C}_i) \cap L))>\mu(L)(1-\varepsilon)$.
\end{itemize}
\end{lemma}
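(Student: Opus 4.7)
The plan is to use the connected toast structure provided by Proposition~\ref{tiling}. I would fix such a structure $\mathcal{T}$ for $G$ (after discarding a null set) and choose $k$ sufficiently well-separated sub-collections of tiles $\mathcal{T}^{(1)}, \dots, \mathcal{T}^{(k)} \subseteq \mathcal{T}$, arranged so that each tile in $\mathcal{T}^{(j)}$ strictly contains many tiles from $\mathcal{T}^{(j-1)}$. The family $\mathcal{C}_j$ will then be constructed tile-by-tile from $\mathcal{T}^{(j)}$.

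For a single tile $K \in \mathcal{T}^{(j)}$, note that since $L$ is a vertex-disjoint union of bi-infinite lines and $K$ is finite, the induced subgraph $L \cap K$ is a disjoint union of finite paths, and the set $N_K$ of endpoints of these paths (i.e.\ the vertices where lines exit $K$) has even cardinality, because each bi-infinite line crosses the boundary of a finite set an even number of times. Applying Lemma~\ref{subgraph} to $K$ with $N_K$ yields a spanning subgraph $H_K \subseteq K$ in which vertices of $N_K$ have odd degree and all other vertices have even degree. The symmetric difference $H_K \triangle (L \cap K)$ is then Eulerian at every vertex and admits a Borel decomposition into pairwise edge-disjoint cycles, which I put into $\mathcal{C}_j$. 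The non-$L$ edges used are precisely those of $H_K$, and the line edges covered are those of $L \cap K$ not in $H_K$.

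To enforce disjointness of non-$L$ edges across the $k$ families, I would refine this so that at scale $j$ the subgraph $H_K$ uses only edges from the annulus $A_K = K \setminus \bigcup\{K' \in \mathcal{T}^{(j-1)} : K' \subsetneq K\}$; annuli of tiles at distinct scales are then disjoint by the nesting of the toast, which gives the required disjointness. To apply Lemma~\ref{subgraph} on $A_K$ rather than on $K$, I would contract each sub-tile $K' \in \mathcal{T}^{(j-1)}$ contained in $K$ to a super-vertex: property~(3) of a connected toast structure guarantees the contracted annulus is connected, and the set of line-exit vertices from $A_K$ remains of even size. The cycles produced in the contracted graph are lifted back to $G$ by routing the portion of each cycle crossing a super-vertex along the corresponding piece of the line inside the inner tile, so that line edges inside inner tiles are also covered by the lifted cycles of $\mathcal{C}_j$.

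The main obstacle is ensuring that these lifts are honest cycles and controlling the measure of missed line edges. When a line enters and exits an inner tile several times, the natural lift along the line is a closed walk rather than a simple cycle, and one has to decompose it using the Eulerian structure and a parity argument, possibly re-invoking Lemma~\ref{subgraph} internally at each super-vertex to re-pair entries with exits. Choosing the scales $h_j$ sufficiently coarse, one arranges that almost every line edge lies in some smallest-scale annulus and is traversed at every higher scale by a lifted cycle along the line, so that the $L$-edges missed by each individual $\mathcal{C}_j$ have measure at most $\varepsilon\mu(L)/k$; the desired inequality then follows from the union bound $\mu(L \setminus \bigcap_j \bigcup \mathcal{C}_j) \leq \sum_j \mu(L \setminus \bigcup \mathcal{C}_j)$.
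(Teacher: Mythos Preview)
Your overall approach is the same as the paper's: use a connected toast structure, pick $k$ nested levels, and at each level apply Lemma~\ref{subgraph} on the annulus so that the non-$L$ edges used at different levels are disjoint. The one place where you diverge from the paper is the contraction/lifting of inner tiles, and this is precisely where you encounter your ``main obstacle.'' That obstacle is self-imposed and can be eliminated entirely.

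The observation you are missing is that the endpoints $N_K$ of the line segments in $K$ (the vertices of $K$ where a line exits $K$) already lie in the annulus $A_K = K \setminus \bigcup\{K' \in \mathcal{T} : K' \subsetneq K\}$. Indeed, by toast property~(2) any inner tile $K' \subsetneq K$ satisfies $K' \cup N(K') \subseteq K$, so a vertex of $K$ with a neighbor outside $K$ cannot belong to any inner tile. Hence you may apply Lemma~\ref{subgraph} \emph{directly} to the induced graph on $A_K$ (connected by property~(3)) with the even set $N_K \subseteq A_K$, obtaining $H_K$ with edges in $E(A_K)$. Now take the symmetric difference of $H_K$ with \emph{all} line edges inside $K$, including those that pass through inner tiles. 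Every vertex has even degree in this graph: line vertices in inner tiles contribute exactly~$2$ from $L$ and~$0$ from $H_K$; line vertices in the annulus and the points of $N_K$ have the correct parity by construction. The resulting Eulerian graph decomposes into edge-disjoint cycles which cover every line edge inside every inner tile, in particular every edge of $L \cap \bigcup\mathcal{M}_m$. No contraction, no lifting, and no re-pairing at super-vertices is needed; this is exactly what the paper does.
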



  

\begin{proof}
We may assume that $G$ is locally finite and even $\mu(G)<\infty$, since $G$ contains a locally finite hyperfinite a.e. one-ended spanning subgraphing $H$ by Lemma~\ref{locfin}, and we may consider the spanning subgraphing with edge set $E(H) \cup E(L)$.
Let $\mathcal{T}$ be a connected toast structure for the graphing $G$ given by Proposition \ref{tiling}. Let $\mathcal{M}_1 \subset \mathcal{T}$ denote the family of minimal sets (ordered by containment), $\mathcal{M}_2$ 
denote the family of minimal sets in $\mathcal{T} \setminus \mathcal{M}_1$ etc. Obviously, $\mathcal{T} = \bigcup_{k=1}^{\infty} \mathcal{M}_k$.
Choose $m$ large enough such that at least $\mu(L)(1-\varepsilon)$ of the edges of $L$ is contained by a set in $\mathcal{M}_m$. 
We construct for $i=1, \dots ,k$ a family of pairwise edge-disjoint cycles $\mathcal{C}_i$ contained in the sets of $\mathcal{M}_{m+i}$.

For every $T \in\mathcal{M}_{m+i-1}$ let $L_T$ denote the set of edges in $E(L)$ with at least one end-vertex in $T$. Note that $L_T$ is a.e. an edge-disjoint union of finite paths with end-vertices not contained by the sets of $\mathcal{M}_{m+i-1}$.

Since $T \setminus \bigcup\mathcal{M}_{m+i-1}$ is connected, by Lemma \ref{subgraph} applied to the set $N_T$ of endpoints of $L_T$, there exists a set of edges $H_T \subseteq \bigcup\mathcal{M}_{m+i}$ such that $L_T \cup H_T$ has vertices of even degree only, and
$H_T \subseteq E(T) \setminus \bigcup_{M \in \mathcal{M}_{M+i-1}} E(M)$. 
Note that $H_T \cup L_T$ is an edge-disjoint union of cycles and denote this collection of cycles by $\mathcal{C}_T$. 
For each $i\leq k$ we put $\mathcal{C}_i$ to be the union of the families $\mathcal{C}_T$ for $T$ in $\mathcal{M}_{m+i}$.
Every such family contains all edges of $L\cap \bigcup\mathcal{M}_m$, which has measure at least $\mu(L)(1-\varepsilon)$.
\end{proof}

While the above lemma will suffice to obtain the results in the next section (in particular in case of a regular graphing), the general case will require a somewhat more subtle analysis. We will use the following definition to analyse general graphings.

\begin{definition}
Assume that $L$ is a subgraphing of a graphing $G$. We say that $L$ is \textit{one-lined} if $L$ consists of a single connected bi-infinite line in every component. 
\end{definition}

The next two lemmas will be used to decrease the measure of $L(\chi)$ for an extreme point $\chi\in P_f$ in case $L(\chi)$ is not one-lined. They will be used in the proof of our main results in the general case.

\begin{lemma} \label{2end} 
Let $G$ be a locally finite hyperfinite two-ended graphing $G$ and $L$ a subgraphing. Assume that $L$ is the vertex-disjoint union of bi-infinite lines and $L$ is nowhere one-lined.

Then for every $\varepsilon>0$ and every $k \in \mathbb{N}$
there exist Borel families  $\mathcal{C}_1, \dots ,\mathcal{C}_k$ each consisting of pairwise edge-disjoint cycles such that 
\begin{itemize}
    \item every edge not in $L$ is covered by at most one cycle in $\bigcup_{i=1}^k\mathcal{C}_i$, 
    \item $\mu(\bigcap_{i=1}^k((\bigcup\mathcal{C}_i) \cap L))>\frac{2}{3}\mu(L)(1-\varepsilon)$.
\end{itemize}
\end{lemma}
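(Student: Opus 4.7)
The plan is to mirror the proof of Lemma \ref{1end}, replacing the connected toast of a one-ended graphing with the cut/slab decomposition furnished by Proposition \ref{equi}(iii) for two-ended graphings. That proposition gives, for each $n$, a measurable partition $V(G) = \bigcup_m A_m$ together with families $\mathcal{C}_m$ of finite cuts slicing each $A_m$ into finite slabs, each slab adjacent to exactly two cuts. Along each connected component of $G|_{A_m}$ one therefore obtains a bi-infinite alternating pattern $\ldots C_{-1} S_0 C_0 S_1 C_1 \ldots$ of cuts and finite slabs. Given $\varepsilon > 0$ and $k$, I first choose $n$ so large that $\nu(A_1 \cup \cdots \cup A_n) > 1 - \varepsilon/2$, and then coarsen by amalgamating long bounded blocks of consecutive slabs into \emph{super-slabs} whose length grows with $k$; this is the two-ended analogue of the level selection $\mathcal{M}_m, \mathcal{M}_{m+1}, \ldots$ used in Lemma \ref{1end}.

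Inside each super-slab $S$, the induced graphing $L_S := L \cap E(S)$ is a vertex-disjoint union of finite paths whose endpoint set $N_S \subseteq S$ lies on the inner neighbourhoods of the two bounding super-cuts. Since each bi-infinite line contributes an even number of endpoints to $N_S$, the cardinality $|N_S|$ is automatically even. For each family index $\ell = 1, \ldots, k$, after selecting a suitable even subset $N_S^{(\ell)} \subseteq N_S$ (explained below), I apply Lemma \ref{subgraph} to $S$ restricted to its non-$L$ edges, with parity target $N_S^{(\ell)}$, to obtain a closing subgraph $H_S^{(\ell)} \subseteq E(S) \setminus L_S$ such that $L_S^{(\ell)} \cup H_S^{(\ell)}$ is Eulerian, where $L_S^{(\ell)} \subseteq L_S$ consists of the segments whose endpoints lie in $N_S^{(\ell)}$. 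Decomposing this Eulerian subgraph into simple cycles yields the restriction of the family $\mathcal{C}_\ell$ to $S$, and across the $k$ families I choose the $H_S^{(\ell)}$ pairwise edge-disjoint using the excess non-$L$-edge budget afforded by the coarsening.

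The constant $\tfrac{2}{3}$ arises from the worst case where a component of $G$ carries exactly three bi-infinite lines of $L$ arranged so that the non-$L$ edges cannot simultaneously parity-correct all six endpoints. The prototypical example is the $3$-rail ladder $\mathbb{Z} \times \{0,1,2\}$ with $L$ equal to the three horizontal rails and the non-$L$ edges equal to the vertical rungs: the connected components of the non-$L$ part of $S$ are single columns, each carrying three $N_S$-endpoints, so at least one line per column must be \emph{excluded} from $N_S^{(\ell)}$. The best one can do is to exclude the same line consistently along each component across all $k$ families, so that the two non-excluded lines are covered \emph{in every family}; this produces intersection coverage $\tfrac{2}{3}\mu(L)$ in the worst case, and combined with the $(1 - \varepsilon/2)$ factor from the coarsening step yields the claimed bound $\tfrac{2}{3}\mu(L)(1-\varepsilon)$.

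The main technical obstacle I expect is guaranteeing that the $k$ closing subgraphs $H_S^{(1)}, \ldots, H_S^{(k)}$ in the same super-slab can be chosen pairwise edge-disjoint on non-$L$ edges. Lemma \ref{subgraph} by itself provides no support control, so one must either iteratively peel off $H_S^{(1)}, \ldots, H_S^{(\ell-1)}$ before invoking the lemma for the $\ell$th family, or perform a Menger-type verification that the non-$L$-edge connectivity inside $S$ between the bounding super-cuts exceeds the required number of parity-correcting paths by a factor of $k$. Either approach succeeds once the coarsening scale is chosen sufficiently large relative to $k$, which is always possible by hyperfiniteness of $G$.
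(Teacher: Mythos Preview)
Your high-level plan is right: use the cut/slab structure of Proposition~\ref{equi}(iii), drop one line-segment per block when the number of lines is odd (whence the $\tfrac{2}{3}$, worst at $l=3$), and close the remaining segments into cycles via Lemma~\ref{subgraph}. But the execution diverges from the paper in a way that leaves the gap you yourself flag unfilled.

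The paper does \emph{not} apply Lemma~\ref{subgraph} to the non-$L$ part of a slab, which, as your own 3-rail example shows, may be badly disconnected. Instead it confines all non-$L$ edges of each cycle to the two \emph{boundary cutsets} of a subinterval $[S_i,T_i]$. These cutsets are connected by Proposition~\ref{equi}(iii), so Lemma~\ref{subgraph} applies cleanly to each with parity target the set of line-endpoints on it; that target has cardinality $l$, and when $l$ is odd one simply drops the shortest of the $l$ segments in the ambient block $[S,T]$ to make it even. This is the full explanation of the $\tfrac{2}{3}$; the per-column parity obstruction in your 3-rail picture is an artifact of applying Lemma~\ref{subgraph} in the wrong place.

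Confining the closing edges to cutsets also dissolves your ``main technical obstacle'' outright. To produce the $k$ families the paper never seeks $k$ edge-disjoint closing subgraphs in a single slab. Inside each large block $[S,T]$ it instead chooses $k$ families $\mathcal{I}_1(S,T),\ldots,\mathcal{I}_k(S,T)$ of subintervals whose bounding cutsets are all distinct, and family $\mathcal{C}_i$ closes its cycles only along the cutsets of $\mathcal{I}_i$. Since non-$L$ edges of any cycle live only on cutsets and the cutsets for different $i$ are disjoint, every non-$L$ edge is automatically covered at most once across $\bigcup_i\mathcal{C}_i$. Meanwhile each $\mathcal{C}_i$ covers the same set $L'(S,T)$ of retained line-segments, so the intersection over $i$ loses nothing further.

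By contrast, your iterative-peeling route is not justified: after removing $H_S^{(1)}$ the non-$L$ graph may lose connectivity exactly where the next parity target needs it, and coarsening alone does not force the non-$L$ subgraph to have enough edge-connectivity to survive $k$ rounds (in the 3-rail ladder the non-$L$ part is a matching, with zero non-$L$ edge-connectivity between the two super-cuts). The Menger suggestion is aimed at the wrong target for the same reason: you need edge-disjoint parity-correcting subgraphs, not edge-disjoint paths between the cuts, and there need be no non-$L$ path between the cuts at all. The missing idea is precisely ``use different cutsets for the $k$ families''.
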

\begin{proof}
Note that a.e. component of $G$ contain only finitely many lines, since  components with infinitely many lines have superlinear growth and thus are one-ended by Proposition \ref{equi}.  We can assume that all components have the same number, say $l$, lines (by working separately with the union of the components with the same number of lines). By our assumption $l>1$.

By Proposition \ref{equi}, find a family $\mathcal{S}$ of finite cutsets in $G$ such that cutsets are connected, each component of $G\setminus\bigcup\mathcal{S}$ is connected and is adjacent to exactly two cutsets in $\mathcal{S}$.
The family $\mathcal{S}$ is bi-infinite on a.e. line of $L$. 
Also, given $S,T\in\mathcal{S}$ we write $[S,T]$ for the set of vertices that lie on or between these two cutsets $S$ and $T$, according to this bi-infinite structure. We also refer to the sets of the form $[S,T]$ as to \textit{intervals}.

Given $S,T\in\mathcal{S}$ write $L(S,T)$ for the vertices that lie on an interval of a line in $L$ such that the interval intersects $\bigcup\mathcal{S}$ only on its endvertices, one in $S$ and one in $T$. By shrinking $\mathcal{S}$ if necessary we can assume that for every $S,T\in\mathcal{S}$ the set $L(S,T)$ is a disjoint union of exactly $l$ intervals of lines in $L$. 

We can find a Borel family $\mathcal{I}$ of disjoint intervals of the form $[S,T]$ for $S,T\in\mathcal{S}$ such that more than $\mu(L)(1-\varepsilon)$ 
of edges in $L$ lie on $L(S,T)$ for $S,T$ being two endpoints of the same interval in $\mathcal{I}$.

Now we fix one $[S,T]\in\mathcal{I}$ and work locally on its induced subgraph. First, if $l$ is odd, then choose one of the shortest of the $l$ intervals in $L(S,T)$ and remove it from $L(S,T)$. Write $L'(S,T)$ for $L(S,T)$ without this chosen interval. Note that $L'(S,T)$ is a union of an even number of intervals in lines in $L$.

Next, for $i\leq k$ find families $\mathcal{I}_i(S,T)$ of pairwise disjoint sets of the form $[S_i,T_i]\subseteq[S,T]$ such that taking the union of all $L(S_i,T_i)$ for $[S_i,T_i]\in \mathcal{I}_i(S,T)$ for all $[S,T]\in\mathcal{I}$ we still cover at least $\mu(L)(1-\varepsilon)$ of the edges in $L$. Moreover, since  $L'(S,T)$ is obtained by removing at most one (among the shortest) of $l$ intervals from $L(S,T)$, and only when $l\geq3$, we can ensure that the union of all $L'(S,T)\cap L(S_i,T_i)$ (for $[S_i,T_i]$ and $[S,T]$ as above) will cover at least $\frac{2}{3}\mu(L)(1-\varepsilon)$
edges in $L$.

The set $L(S_i,T_i)\cap L'(S,T)$ is a union of an even number of intervals of lines in $L$. Apply Lemma \ref{subgraph} to the endvertices of these paths in $S_i$ and $T_i$, respectively, in order to get edge-disjoint cycles covering these paths with all the additional edges in $S_i$ and $T_i$.

Write $\mathcal{C}_i$ for the family of all cycles obtained in the above way for all $L(S_i,T_i)$ for $[S_i,T_i]\in \mathcal{I}_i(S,T)$ for all $[S,T]\in\mathcal{I}$. Note that we cover all edges in $L$ that lie in $L(S_i,T_i)\cap L'(S,T)$, and the latter set has measure at least $\frac{2}{3}\mu(L)(1-\varepsilon)$. 
\end{proof}

\begin{lemma}\label{covering.twolined}
Let $G$ be a hyperfinite graphing $G$ and $L$ a subgraphing of $G$. Assume that $L$ is the vertex-disjoint union of bi-infinite lines and $L$ is nowhere one-lined. 
Let $f:V(G)\to\mathbb{N}$ be integrable and $\tau:E(G)\to[0,1]$ be a measurable perfect fractional $f$-matching.

There exist $\theta>0$ such that  for every $k \in \mathbb{N}$
there exist Borel families  $\mathcal{C}_1, \dots ,\mathcal{C}_k$, each consisting of pairwise edge-disjoint cycles such that 
\begin{itemize}
    \item every edge not in $L$ covered by at most one cycle of $\bigcup_{i=1}^k\mathcal{C}_i$ 
    \item $\mu(\bigcap_{i=1}^k((\bigcup\mathcal{C}_i) \cap L))>\frac{1}{2}\mu(L)$,
    \item $\tau(e),1-\tau(e)>\theta$ for every edge $e$ covered by $\bigcup_{i=1}^k\mathcal{C}_i$.
\end{itemize}
\end{lemma}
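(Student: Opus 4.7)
The plan is to combine the constructions from Lemmas~\ref{1end} and~\ref{2end} with a restriction of all cycle edges to the ``good'' subgraphing $G_\theta$ of edges on which $\tau$ is bounded away from $0$ and $1$. Since $G$ is hyperfinite, each component has at most two ends; I would split $G$ into an invariant one-ended part $G^{(1)}$ and a two-ended part $G^{(2)}$ and handle each separately. The hypothesis that $L$ is nowhere one-lined carries over to both. As in the proof of Lemma~\ref{1end} we may further reduce to the locally finite case with $\mu(G)<\infty$ by passing to the spanning subgraphing from Lemma~\ref{locfin} augmented by $E(L)$. We will work under the hypothesis $L\subseteq\{0<\tau<1\}$ a.e., which is implicit in the way the lemma is applied (with $\tau=\chi\in P_f$ equal to $\tfrac{1}{2}$ on $L$).

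Choose $\theta>0$ small enough that the bad set $B_\theta:=\{e\in E(G):\tau(e)\leq\theta\text{ or }\tau(e)\geq 1-\theta\}$ satisfies $\mu(L\cap B_\theta)<\tfrac{1}{10}\mu(L)$, and let $G_\theta$ denote the spanning subgraphing of $G$ with edge set $E(G)\setminus B_\theta$.

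On $G^{(1)}$ I would reprise the proof of Lemma~\ref{1end}, starting from a connected toast structure $\mathcal{T}$ given by Proposition~\ref{tiling}. The only modification is that when applying Lemma~\ref{subgraph} inside the outer shell of a tile $T\in\mathcal{M}_{m+i}$ to produce the connecting edges $H_T$, I would require $H_T\subseteq E(G_\theta)$. This is possible provided the outer shell of $T$, restricted to $G_\theta$, remains connected; by choosing $m$ sufficiently large and $\theta$ sufficiently small, a Borel--Cantelli-type argument parallel to the layer-refinement step in the proof of Proposition~\ref{tiling} shows that the tiles failing this condition---or whose $L$-subpaths meet $B_\theta$---constitute a set of arbitrarily small measure. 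The remaining tiles yield cycles in $L\cup E(G_\theta)$ covering at least $(1-\varepsilon)\mu(L\cap G^{(1)})$ of $L$. On $G^{(2)}$ the analogous modification of Lemma~\ref{2end}, using the cutset family from Proposition~\ref{equi} refined so that most cutsets are connected in $G_\theta$, produces cycles covering at least $\tfrac{2}{3}(1-\varepsilon)\mu(L\cap G^{(2)})$. Summing these two contributions and using the choice of $\theta$ yields the required coverage exceeding $\tfrac{1}{2}\mu(L)$.

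The main obstacle is the refinement argument showing that, for $\theta$ small and the toast (respectively cutset) structure deep enough, the outer shells of the chosen tiles (respectively the cutsets themselves) remain connected after deletion of $B_\theta$, so that the connecting edges produced by Lemma~\ref{subgraph} can be forced into $G_\theta$. Heuristically, a sparse edge deletion from a large, highly connected finite subgraph rarely disconnects it, but formalising this requires a careful Borel--Cantelli-type control on how bad edges distribute across successive levels of the toast/cutset hierarchy, mirroring the iterative construction used to prove Proposition~\ref{tiling}.
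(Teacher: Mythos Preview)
Your approach takes a genuine detour that creates the very obstacle you flag at the end, and the paper's proof shows how to avoid it entirely. Rather than building the toast (resp.\ cutset) structure in $G$ and then arguing that outer shells (resp.\ cutsets) survive deletion of the bad edges $B_\theta$, the paper simply defines the locally finite subgraphing
\[
H_\theta \;=\; E(L)\;\cup\;\{e\in E(G):\theta\le\tau(e)\le 1-\theta\}
\]
and applies Lemmas~\ref{1end} and~\ref{2end} \emph{directly to $H_\theta$}. The splitting into one-ended and two-ended parts is done according to the components of $H_\theta$, not of $G$; the only loss is the $H_\theta$-components on which $L$ is one-lined, and since $L$ is nowhere one-lined in $G$ their measure tends to zero as $\theta\to 0$. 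This yields the third bullet for free and eliminates any need to control how a pre-built toast structure interacts with $B_\theta$.

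The obstacle you isolate is real and your heuristic does not overcome it. The outer shell $K\setminus\bigcup_{L\subsetneq K}L$ of a toast tile is, by the construction in Proposition~\ref{tiling}, connected only through edges of the one-ended spanning tree $T$; it is tree-like, not ``highly connected'', so deleting a single edge of $B_\theta$ can disconnect it regardless of how small $\mu(B_\theta)$ is. A Borel--Cantelli argument on levels does not help here, because one bad edge can sit in the outer shell of an arbitrarily large tile, so the measure of affected tiles is not controlled by $\mu(B_\theta)$. The same fragility afflicts the cutsets in the two-ended case. You also overlook that $H_\theta$ is automatically locally finite (each vertex $v$ has at most $f(v)/\theta$ edges with $\tau\ge\theta$, plus at most two edges of $L$), which is precisely what makes the paper's direct application of the earlier lemmas legitimate and the reduction via Lemma~\ref{locfin} unnecessary at this stage.
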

\begin{proof}
Fix $k\in\mathbb{N}$. Given $\theta>0$ consider the spanning subgraphing $H_\theta$ spanned by the set of edges $E(L) \cup \{ e\in E(G):  \tau(e),1-\tau(e)\geq \theta\}$. Note that $H_\theta$ is locally finite. Choosing $\theta$  small enough, we can make sure that the measure of the union of components of $H_\theta$ where $L$ is one-lined is arbitrarily small. 
We can apply Lemma \ref{1end} (with $\varepsilon=\frac{1}{2}$) to the union of the one-ended components of $H_\theta$ and Lemma \ref{2end} (with $\varepsilon=\frac{1}{4}$) to the union of two-ended components of $H_\theta$  in order to obtain the required family of cycles.
\end{proof}

\section{Random distortions of perfect fractional matchings}

\label{sec:improving}


Over the next couple of sections, we show how, given an extreme point $\chi$ of $P_f$ one can improve it in the way that the measure of $L(\chi)$ can be decreased, until it becomes $0$.

In this section, will first do it under the additional assumptions of Proposition \ref{successor1}, which will already suffice to obtain Corollary \ref{regular}, as stated in Theorem \ref{uniformly.bounded}. To prove it, we will study what happens when we randomly distort a perfect fractional matchings by adding to it small random numbers.
We will use the following terminology.

\begin{definition}
Suppose $G$ is a bipartite graphing, and $f: V(G) \to \mathbb{N}$ is integrable. 
By a \textit{random measurable perfect fractional $f$-matching} we mean a Borel probability measure on $P_f$.
\end{definition}

Given a random measurable perfect fractional $f$-matching $X$ on $R_f$, and a real-valued function $g$ on $R_f$, we write $\mathbb{E}g(X)$ for the integral with respect to $X$. Also, given a Borel probability measure $X$ on $P_f$ we will write $\mathrm{bar}(X)$ for the barycenter of $X$.


\begin{proposition}\label{successor1}
Let $G$ be a hyperfinite bipartite one-ended graphing, $f:V(G)\to\mathbb{N}$ be integrable 
and $\tau:E(G)\to[0,1]$ be a measurable perfect fractional $f$-matching such that $\supp(\tau)=E(G)$. 

Let $\chi$ be a measurable perfect fractional $f$-matching such that
$\chi \in R_f$ 
and $\mu(L(\chi))>0$. 

Suppose that at least one of the following holds 
\begin{itemize}
\item[(a)]  $\{e\in E(G):\tau(e),1-\tau(e)>\theta\}$ is one-ended for some $\theta>0$,
\item[(b)] or $L(\chi)$ is nowhere one-lined.
\end{itemize}

Then there exists a random measurable perfect fractional matching $X$ such that  $X$ is an extreme point of $P_f$ a.s. and
\begin{itemize}
    \item[(i)] $\mathbb{E} \mu(L(X))< \mu(L(\chi))$  
    \item[(ii)] $
\mathbb{E}\mu(\{e\in E(G):X(e)\not=\chi(e)\})\leq 3(\mu(L(\chi))-\mathbb{E}\mu(L(X)))$.
\end{itemize}
and $\mathrm{bar}(X)$ is a convex combination of $\tau$ and $\chi$.
\end{proposition}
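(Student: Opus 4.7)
The plan is to construct $X$ as a random extreme point of $P_f$ with barycenter $\chi_\alpha := (1-\alpha)\chi + \alpha\tau$ for some small $\alpha > 0$, obtained via randomized alternating-circuit perturbations of $\chi_\alpha$ along cycles covering $L(\chi)$. Since $\tau$ has full support in $(0,1)$, the interior point $\chi_\alpha$ provides the room needed for perturbations even on cycles traversing edges where $\chi \in \{0,1\}$, and since uniform random signs on independent alternating circuits preserve the barycenter, the condition that $\mathrm{bar}(X)$ is a convex combination of $\chi$ and $\tau$ comes for free.

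The first step is to produce Borel families $\mathcal{C}_1, \ldots, \mathcal{C}_k$ of pairwise edge-disjoint cycles whose edges lie in $L(\chi) \cup G_\theta$, where $G_\theta := \{e : \tau(e), 1-\tau(e) > \theta\}$, and which cover a large fraction of $L(\chi)$. In case (a), $G_\theta$ is one-ended, so Proposition~\ref{tiling} provides a connected toast structure on it; within each tile, Lemma~\ref{subgraph} closes the $L(\chi)$-paths into cycles using connecting edges drawn from $G_\theta$, analogously to the proof of Lemma~\ref{1end}. In case (b), Lemma~\ref{covering.twolined} yields such families directly. In both cases, all non-$L(\chi)$-edges of the cycles have $\tau$ bounded away from $\{0,1\}$.

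For each cycle $C \in \bigcup_i \mathcal{C}_i$, assign an independent uniform random sign $\epsilon_C \in \{-1, +1\}$ and add the alternating $\epsilon_C\delta$-circuit on $C$ to $\chi_\alpha$, with $\delta$ small enough (e.g.\ $\delta = \alpha\theta/k$) that all values stay in $[0,1]$. This produces a random $Y \in P_f$ with $\mathbb{E}[Y] = \chi_\alpha$. Iterating the procedure on the updated support (finding new cycles in $\{e : 0 < Y(e) < 1\}$ and perturbing with independent fresh random signs), the value at any fixed edge forms a bounded martingale, and a Borel--Cantelli argument on the $L^2$-norm (in the spirit of the proof of Theorem~\ref{odd}) shows that the iteration converges a.s.\ to a random $X \in P_f$ whose support $\{e : 0 < X(e) < 1\}$ is essentially a vertex-disjoint union of bi-infinite lines. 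Hence $X$ is an extreme point of $P_f$ by Lemma~\ref{extreme}, with $\mathbb{E}[X] = \chi_\alpha$.

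For condition (i), taking $k$ large enough guarantees that on each $L(\chi)$-edge $e$ covered by all $k$ families, enough independent sign-flips occur that $X(e)$ avoids $\tfrac{1}{2}$ with probability bounded below, giving $\mathbb{E}\mu(L(X)) < \mu(L(\chi))$. For condition (ii), the set $\{X \neq \chi\}$ is contained in the union of the chosen cycles, and a careful cycle construction (tuning the toast tile sizes and connecting paths) keeps the average ratio of non-$L(\chi)$- to $L(\chi)$-edges per cycle at most $\tfrac{1}{2}$, which translates via direct counting into the factor $3$ bound. The main obstacle I anticipate is coordinating (i) and (ii) simultaneously with the extreme-point iteration --- in particular, maintaining this ratio bound and sufficient coverage of $L(\chi)$ throughout the iterative steps.
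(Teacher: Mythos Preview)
Your setup --- moving to $\chi_\alpha=(1-\alpha)\chi+\alpha\tau$, producing cycle families $\mathcal{C}_1,\ldots,\mathcal{C}_k$ via Lemma~\ref{1end} or Lemma~\ref{covering.twolined}, and adding independent random $\pm\delta$-circuits --- matches the paper's construction of the intermediate random matching $Y$. But two of the key mechanisms you propose do not work as stated.

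\textbf{Reaching extreme points.} Your iterative scheme, modeled on the proof of Theorem~\ref{odd}, does not drive $Y$ to an extreme point. In Theorem~\ref{odd} the circuit amplitude is $\tfrac{1}{d}$, matching the grid step, so each perturbation pushes some edge to $\{0,1\}$ and shrinks the support. Here your amplitude is $\delta=\alpha\theta/k$, chosen small precisely so that $Y$ stays in $P_f$; after one round every edge remains strictly interior, the support is still all of $E(G)$, and further rounds just wander inside $P_f$ with no mechanism for convergence to the boundary. The paper bypasses this entirely: it applies the Choquet--Bishop--de Leeuw theorem to the law of $Y$ to obtain a random extreme point $X$ with $\mathrm{bar}(X)=\mathrm{bar}(Y)=\rho$, and then checks that the needed inequality transfers from $Y$ to $X$ by Jensen (since $|x-\tfrac12|$ is convex and $|x-\chi(e)|$ is linear on the relevant range for $e\notin L(\chi)$).

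\textbf{The quantitative inequality.} Your route to (ii) --- bounding the ratio of non-$L(\chi)$ to $L(\chi)$ edges per cycle by $\tfrac12$ --- is not achievable and not what the paper does; the connecting paths in tiles can be arbitrarily long relative to the $L(\chi)$-segment they close up. The actual argument is that for $e\notin L(\chi)$ the quantity $Y(e)-\chi(e)$ has constant sign, so $\mathbb{E}|Y(e)-\chi(e)|=|\rho(e)-\chi(e)|\le\lambda$, giving $\mathbb{E}\int_{E\setminus L(\chi)}|Y-\chi|\le\lambda\|f\|_1$ independently of $k$. On the other hand, for $e\in L(\chi)$ covered by all $k$ families, $Y(e)-\tfrac12$ is a sum of $k$ independent $\pm\varepsilon$ coins plus a small bias, and Berry--Esseen gives $\mathbb{E}|Y(e)-\tfrac12|=\varepsilon\cdot\Omega(\sqrt{k})$. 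Choosing $k$ large makes the latter dominate, which is exactly the hypothesis of Lemma~\ref{weird.estimate}; conditions (i) and (ii) then follow from that lemma, not from any cycle-length control.
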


Before we prove the proposition above, we state and prove the following lemma. Its statement is slightly stronger than what we need in this section (namely in Theorem \ref{uniformly.bounded} we will use it only in case $X$ is a measurable perfect $f$-matching) but we will use this stronger statement in the next sections.

\begin{lemma}\label{weird.estimate}
Let $G$ be a hyperfinite bipartite graphing and $f:V(G)\to\mathbb{N}$ be integrable.
Let $\chi$ be a measurable perfect fractional $f$-matching such that
$\chi \in R_f$ 
and $\mu(L(\chi))>0$.

Suppose $X$ a random measurable perfect fractional $f$-matching $X$ which is a.s. an extreme point of $P_f$ 
such that

$$    \mathbb{E} \int_{e\in E(G)\setminus L(\chi)} |X(e)-\chi(e)| < \frac{1}{2}\mathbb{E} 
    \int_{e\in L(\chi)} |X(e)-\chi(e)|
$$

Then the following hold.
\begin{itemize}
    \item[(i)] $\mathbb{E} \mu(L(X))< \mu(L(\chi))-\mathbb{E} [\frac{1}{2}
    \int_{e\in L(\chi)} |X(e)-\chi(e)|- \int_{e\in E(G)\setminus L(\chi)} |X(e)-\chi(e)|]$  
    \item[(ii)] $
\mathbb{E}\mu(\{e\in E(G):X(e)\not=\chi(e)\})\leq 3(\mu(L(\chi))-\mathbb{E}\mu(L(X)))$.
\end{itemize}
\end{lemma}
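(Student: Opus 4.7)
The plan is to observe that since $X$ is a.s.\ an extreme point of $P_f$, Lemma~\ref{extreme} forces $X\in R_f$ a.s., so for a.e.\ edge $e$ both $X(e)$ and $\chi(e)$ lie in $\{0,\tfrac12,1\}$. The proof then reduces to a case analysis on this ternary alphabet, split by whether $e\in L(\chi)$ (so $\chi(e)=\tfrac12$) or $e\in E(G)\setminus L(\chi)$ (so $\chi(e)\in\{0,1\}$), followed by integration against the law of $X$.

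On the first region, $X(e)\in\{0,\tfrac12,1\}$ forces $|X(e)-\chi(e)|$ to be $0$ when $e\in L(X)$ and $\tfrac12$ when $e\notin L(X)$. This yields the exact identity
\[
\int_{L(\chi)}|X-\chi|\,d\mu \;=\; \tfrac12\,\mu(L(\chi)\setminus L(X)).
\]
On the complementary region, $X(e)=\tfrac12$ contributes $\tfrac12$ to the integrand while placing $e$ in $L(X)\setminus L(\chi)$, and $X(e)=1-\chi(e)$ contributes $1$ without placing $e$ in $L(X)$, so
\[
\int_{E(G)\setminus L(\chi)}|X-\chi|\,d\mu \;\ge\; \tfrac12\,\mu(L(X)\setminus L(\chi)).
\]
Writing $A=\mathbb{E}\int_{L(\chi)}|X-\chi|\,d\mu$ and $B=\mathbb{E}\int_{E(G)\setminus L(\chi)}|X-\chi|\,d\mu$, the identity $\mu(L(X))=\mu(L(\chi))-\mu(L(\chi)\setminus L(X))+\mu(L(X)\setminus L(\chi))$ combined with the two estimates above gives, after taking expectations,
\[
\mathbb{E}\mu(L(X)) \;\le\; \mu(L(\chi))-2A+2B.
\]

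For (i), the hypothesis $B<\tfrac12 A$ implies in particular $A>0$ (otherwise $B<0$, impossible) and $B<\tfrac32 A$, so $-2A+2B<-\tfrac12 A+B$, giving the strict inequality $\mathbb{E}\mu(L(X))<\mu(L(\chi))-\tfrac12 A+B$, which is exactly (i). For (ii), the same ternary analysis bounds the edges where $X$ and $\chi$ disagree: on $L(\chi)$ disagreement forces $|X-\chi|=\tfrac12$, and on its complement disagreement forces $|X-\chi|\ge\tfrac12$, whence
\[
\mathbb{E}\mu(\{e:X(e)\neq\chi(e)\}) \;\le\; 2A+2B.
\]
On the other hand, the bound $\mathbb{E}\mu(L(X))\le\mu(L(\chi))-2A+2B$ rearranges to $\mu(L(\chi))-\mathbb{E}\mu(L(X))\ge 2A-2B$, so $3(\mu(L(\chi))-\mathbb{E}\mu(L(X)))\ge 6A-6B$; the inequality $2A+2B\le 6A-6B$ is equivalent to $B\le\tfrac12 A$, which is given. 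This proves (ii).

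The argument is entirely bookkeeping once extremality reduces everything to the alphabet $\{0,\tfrac12,1\}$; the only point that needs care is the constant $3$ in (ii), which is forced by the worst case where $X(e)=1-\chi(e)$ off $L(\chi)$ contributes mass $1$ to the disagreement set but only mass $0$ to $L(X)\setminus L(\chi)$.
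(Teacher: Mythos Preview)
Your proof is correct and follows essentially the same approach as the paper's. Both arguments invoke Lemma~\ref{extreme} to reduce to the alphabet $\{0,\tfrac12,1\}$, split into the regions $L(\chi)$ and $E(G)\setminus L(\chi)$, and derive the same pair of estimates (your identity $\int_{L(\chi)}|X-\chi|=\tfrac12\mu(L(\chi)\setminus L(X))$ and inequality $\int_{E(G)\setminus L(\chi)}|X-\chi|\ge\tfrac12\mu(L(X)\setminus L(\chi))$ are exactly the paper's bounds $\mathbb{E}\mu(B)=2\mathbb{E}\int_{L(\chi)}|X-\chi|$ and $\mathbb{E}\mu(A)\le 2\mathbb{E}\int_{E(G)\setminus L(\chi)}|X-\chi|$, with the roles of the letters $A,B$ swapped). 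The algebra leading to (i) and (ii) is then identical.
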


\begin{proof}
Consider the sets (treated as random variables)
$$A=\{e\in E(G)\setminus L(\chi): X(e) \neq \chi(e)\},\quad B=\{e\in L(\chi): X(e)\neq \chi(e)\}.$$
and note that $B=L(\chi)\setminus L(X)
.$
(Technically, we will only treat $\mu(A)$ and $\mu(B)$ as random variables.) Note that by Lemma \ref{extreme} we have
$$\mathbb{E} \mu(A)\leq
2\mathbb{E} \int_{e\in (G)\setminus L(\chi)} |X(e)-\chi(e)|$$ and $$\mathbb{E}\mu(B)=2\mathbb{E} \int_{e\in L(\chi)} |X(e)-\frac{1}{2}|=2\mathbb{E} \int_{e\in L(\chi)} |X(e)-\chi(e)|.$$
Write $\gamma=\mathbb{E} \big[\frac{1}{2}
    \int_{e\in L(\chi)} |X(e)-\chi(e)|- \int_{e\in E(G)\setminus L(\chi)} |X(e)-\chi(e)|\big]$, so that we have

\begin{equation}\label{change1}
    \mathbb{E}\mu(A)<\frac{1}{2}\mathbb{E}\mu(B)-\gamma.
\end{equation}
Since 
$L(X) \setminus L(\chi) \subseteq A$ and $B=L(\chi) \setminus L(X)$, we get
\begin{equation}\label{change2}
\mathbb{E}\mu(L(X))\leq \mu(L(\chi))-\mathbb{E}\mu(B)+\mathbb{E}\mu(A),
\end{equation}
so by (\ref{change1}) we get $$\mathbb{E} \mu(L(X))\leq\mu(L(\chi))-\mathbb{E}\mu(B)+\frac{1}{2}\mathbb{E}\mu(B)-\gamma<\mu(L(\chi))-\gamma,$$
which justifies (a).
To justify (b), note that (\ref{change2}) together with (\ref{change1}) imply
\begin{eqnarray*}
  \mathbb{E}\mu(\{e\in E(G):X(e)\not=\chi(e)\})=\mathbb{E}\mu(A)+\mathbb{E}\mu(B)\\ \leq  3(\mu(L(\chi))-\mathbb{E}\mu(L(X))).  
\end{eqnarray*}
\end{proof}

Now we prove the main result of this section.

\begin{proof}[Proof of Proposition \ref{successor1}]
By Lemma \ref{weird.estimate}, it is enough to find a measurable perfect fractional matching $X$ which is a.s. an extreme point of $P_f$ 
such that

$$    \mathbb{E} \int_{e\in E(G)\setminus L(\chi)} |X(e)-\chi(e)| < \frac{1}{2}\mathbb{E} 
    \int_{e\in L(\chi)} |X(e)-\chi(e)|$$
and $\mathrm{bar}(X)$ is a convex combination of $\chi$ and $\tau$.

First we find a measurable perfect fractional matching $Y$ that satisfies the inequality, but may not be supported on extreme points.

There exists $\theta>0$ such that for every $k\in\mathbb{N}$ there exist Borel families of cycles $\mathcal{C}_1,\ldots,\mathcal{C}_k$, each consisting of pairwise edge-disjoint cycles and such that 

\begin{itemize}
    \item every edge not in $L$ is covered by at most one cycle of $\bigcup_{i=1}^k\mathcal{C}_i$, 
    \item $\mu(\bigcap_{i=1}^k(E(\bigcup\mathcal{C}_i) \cap E(L)))>\frac{1}{2}\mu(L)$,
    \item $\tau(e),1-\tau(e)>\theta$ for every edge $e$ covered by $\bigcup_{i=1}^k\mathcal{C}_i$.
\end{itemize}
Indeed, if (a) is satisfied, then apply Lemma~\ref{1end} to to $L=L(\chi)$, $\varepsilon=\frac{1}{2}$ and the subgraphing spanned by $\{e\in E(G):\tau(e),1-\tau(e)>\theta\}$ for the $\theta$ given in (a). If (b) is satisfied, then apply Lemma~\ref{covering.twolined} to $L=L(\chi)$ and get $\theta>0$ such that the above are satisfied.


We choose  $k$ large enough to be specified later and let $\mathcal{C}_1,\ldots,\mathcal{C}_k$ be Borel families of cycles as above. Put 
\begin{equation}\label{epsilon1}
\varepsilon=\frac{1}{2k+\frac{4}{\theta}}\quad\mbox{and}\quad\lambda=\frac{2\varepsilon}{\theta}
\end{equation}
and consider the perfect fractional $f$-matching $$\rho=\lambda \tau + (1-\lambda) \chi.$$
Note that for every edge $e$ covered by a cycle in $\mathcal{C}$ we have 
\begin{equation}\label{lowerbound}
 \lambda \tau(e) + (1-\lambda) \chi(e) ,\  1- \lambda \tau(e) + (1-\lambda) \chi(e) >\lambda\theta
\end{equation}

For every cycle in $\bigcup_{i=1}^k\mathcal{C}_i$ select one edge in a Borel way. For every $i\leq k$ we consider the alternating $\varepsilon$-circuit $\zeta_i$ on the cycles in $\mathcal{C}_i$, i.e., $\zeta_i(e)$ is equal to 0 on edges $e$ that do not belong to such cycles and for every cycle in $\mathcal{C}_i$ the function $\zeta_i$ is an alternating $\varepsilon$-circuit on this cycle.

Let the random variables $Z_1,Z_2,\ldots Z_k\in\{-1,1\}$ be independent identically distributed with $\mathbb{E}(Z_i)=0$.
Write $Y$ for the random perfect fractional $f$-matching obtained by randomly adding the circuits on the cycles, i.e., $$Y=\rho+\sum_{i=1}^k Z_i \zeta_i.$$

Note that (\ref{epsilon1}) imply that $\varepsilon<\lambda\theta$ and thus by (\ref{lowerbound}) we have that $0\leq Y(e)\leq1$ for $e\in E(G)\setminus L$. For $e\in L$ we have $\frac{1}{2}-\lambda\leq\rho(e)\leq\frac{1}{2}+\lambda$ and thus $\frac{1}{2}-\lambda-k\varepsilon\leq Y(e)\leq\frac{1}{2}+\lambda+k\varepsilon$, so by (\ref{epsilon1}) we also get $0\leq Y(e)\leq1$. That means  that $Y$ is still in $P_f$, for every choice of $Z_1,Z_2,\ldots Z_k\in\{-1,1\}$  


Note that for every edge $e\in E(G)\setminus L(\chi)$ the value $Y(e)-\chi(e)$ has the same sign, depending only on whether $\chi(e)=0$ or $\chi(e)=1$, and thus $\mathbb{E}Z_i=0$ implies that

\begin{equation}\label{estimateonN1}
    \mathbb{E}|Y(e)-\chi(e)|=|\rho(e)-\chi(e)|\leq\lambda(\tau(e)-\chi(e)) 
\end{equation}


For every measurable fractional perfect $f$-matching $\sigma$ we have $\|\sigma\|_1=\frac{1}{2}\|f\|_1$, so, by (\ref{estimateonN1}) and Fubini's theorem, in the expected value we have
\begin{equation}\label{estimateonN}
    \mathbb{E}\int_{e\in E(G)\setminus L(\chi)} |Y(e)-\chi(e)|
\leq \lambda(\|\tau\|_1+\|\chi\|_1)=\frac{2\varepsilon}{\theta}(\|\tau\|_1+\|\chi\|_1)=\frac{2\varepsilon}{\theta}\|f\|_1.
\end{equation}

Write $L'=L(\chi)\cap\bigcap_{i=1}^k\mathcal{C}_i$ and note that $\mu(L')\geq\frac{1}{2}\mu(L)$ by the properties of our families of cycles.
Recall that the Berry--Esseen theorem implies that if $X_1,\ldots,X_n\in\{-1,1\}$ are iid with $\mathbb{E}X_i=0$, then $\lim_{n\to\infty}\mathbb{E}|\sum_{i=1}^n X_i|\slash\sqrt{n}=\mathbb{E}|N|>0$, where $N$ has normal distribution, and thus $\mathbb{E}|\sum_{i=1}^n Y_i|
=\Omega(\sqrt{n})$. Therefore, for every edge $e\in L'$ we have
\begin{equation}\label{clt}
\mathbb{E}|Y(e)-\frac{1}{2}|=\varepsilon\,\Omega(\sqrt{k}).
\end{equation}
and the exact value of $\Omega(\sqrt{k})$ in (\ref{clt}) does not depend on the edge $e\in L(\chi)$, so by Fubini's theorem we have 
\begin{equation}\label{clt13}
 \mathbb{E}\int_{e\in L(\chi)}|Y(e)-\frac{1}{2}|\geq\varepsilon\,\Omega(\sqrt{k})\mu(L')\geq \varepsilon\,\frac{1}{2}\Omega(\sqrt{k})\mu(L(\chi)).
\end{equation}

Now, using (\ref{clt13}) and (\ref{estimateonN}) we can fix $k$ big enough so that 

\begin{equation}\label{final1}
    \mathbb{E}\int_{e\in M} |Y(e)-\chi(e)| < \frac{1}{2}
    \mathbb{E}\int_{e\in L} |Y(e)-\chi(e)|.
\end{equation}
Note that the barycenter of $Y$ is $\rho=\lambda\tau+(1-\lambda)\chi$, since $\mathbb{E} Z_i=0$ for each $i\leq k$.

Now, we improve $Y$ to be concentrated on the extreme points of $P_f$. By the Choquet–-Bishop--de Leeuw theorem, there exists a probability measure $X$ on the set of extreme points of $P_f$ whose barycenter is $\mathrm{bar}(Y)$.

 


Note that for a.e. $e \notin L(\chi)$ either $Y(e)-\chi(e)$ is always non-negative or $Y(e)-\chi(e)$ is always non-positive. Hence its absolute value is linear and  
$$\mathbb{E} \int_{e\in E(G)\setminus L(\chi)}  |Y(e)-\chi(e)| = \mathbb{E}\int_{e\in E(G)\setminus L(\chi)} |X(e)-\chi(e)|.$$
On the other hand,  for a.e. $e\in L(\chi)$ we have $\chi(e)=\frac{1}{2}$, so by convexity of $|x-\frac{1}{2}|$ on $[0,1]$, Jensen's inequality and Fubini's theorem imply $$\frac{1}{2}\mathbb{E}
    \int_{e\in L(\chi)} |Y(e)-\chi(e)| \leq
\frac{1}{2}\mathbb{E}
    \int_{e\in L(\chi)} |X(e)-\chi(e)|.$$

\end{proof}

Using the previous proposition, we can state and prove the following weaker version of Theorem \ref{fmatching}, which implies Corollary \ref{regular}. The proof of Theorem \ref{fmatching} in Section \ref{section.final} will basically follows the same lines but will need a more detailed analysis of Section \ref{one.lined.section} (this will be needed to prove Theorem \ref{fmatching} in its full generality, which is useful e.g. for applications such a those in \cite{timar.new}).

\begin{theorem}\label{uniformly.bounded}
Let $G$ be a bipartite hyperfinite  graphing $G$ and $f: V(G) \to \mathbb{N}$ be integrable. Suppose that $G$ admits a measurable perfect fractional $f$-matching $\tau:E(G)\to[0,1]$ such that for some $\theta>0$ the set 
$$\{e\in E(G):\tau(e),1-\tau(e)>\theta\}\ \ \mbox{is one-ended.}$$
Then $G$ admits such a measurable perfect $f$-matching $\chi:E(G)\to[0,1]$.
\end{theorem}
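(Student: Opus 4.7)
The plan is to start from an extreme point of $P_f$ (guaranteed by the Krein--Milman theorem since $\tau \in P_f$, and lying in $R_f$ by Lemma \ref{extreme}) and iteratively improve it via Proposition \ref{successor1} in order to drive $\mu(L(\cdot))$ to zero.

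I will inductively construct Borel probability measures $\mathbb{P}_0 = \delta_{\chi_0}, \mathbb{P}_1, \mathbb{P}_2, \ldots$ supported on the extreme points of $P_f$. Writing $X_n$ for a random extreme point with law $\mathbb{P}_n$, given $\mathbb{P}_n$ I apply Proposition \ref{successor1} fiberwise (using hypothesis (a), which holds by assumption of the theorem after passing to $\supp(\tau)$ as in Remark \ref{constant1}): for each $\chi$ in the support with $\mu(L(\chi)) > 0$ I choose a probability measure $\mathbb{Q}_\chi$ on extreme points as given by the proposition, and set $\mathbb{Q}_\chi = \delta_\chi$ if $\mu(L(\chi)) = 0$; then $\mathbb{P}_{n+1} := \int \mathbb{Q}_\chi \, d\mathbb{P}_n(\chi)$. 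Measurability of $\chi \mapsto \mathbb{Q}_\chi$ follows from the explicit Borel nature of the construction in the proof of Proposition \ref{successor1}. Telescoping bound (ii) of Proposition \ref{successor1} over $n$ gives
$$
\sum_{n=0}^\infty \mathbb{E}\,\mu\bigl(\{e: X_{n+1}(e) \neq X_n(e)\}\bigr) \;\leq\; 3\,\mathbb{E}\mu(L(\chi_0)) \;\leq\; 3\|f\|_1 \;<\; \infty.
$$
By Fubini's theorem and the Borel--Cantelli lemma, for $\mu$-a.e.\ edge $e$ the value $X_n(e)$ is eventually constant almost surely, so $X_n \to X_\infty$ pointwise $\mu$-a.e.\ almost surely; the dominated convergence theorem (dominated by $\sum_n |X_{n+1} - X_n| \in L^1(E(G),\mu)$ a.s.) then gives convergence in $L^1$, so $X_\infty \in R_f$ almost surely.

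The hard part will be forcing $\mathbb{E}\mu(L(X_n)) \to 0$, since Proposition \ref{successor1} only guarantees a strict but unquantified decrease per step. To handle this, I would inspect the proof of Proposition \ref{successor1} and optimize the parameters $k$ and $\varepsilon$ used there: combining the lower bound $\Omega(\varepsilon\sqrt{k})\mu(L(\chi))$ on $\mathbb{E}\int_{L(\chi)}|Y - \tfrac12|$ from the Berry--Esseen-style estimate with the upper bound $O(\varepsilon/\theta)\|f\|_1$ on $\mathbb{E}\int_{E(G)\setminus L(\chi)}|Y-\chi|$, and optimizing over $k$, yields $\mathbb{Q}_\chi$ with $\mathbb{E}_{\mathbb{Q}_\chi}\mu(L) \leq \mu(L(\chi)) - c\,\mu(L(\chi))^2$ for a constant $c = c(\theta,\|f\|_1)>0$. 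Choosing such $\mathbb{Q}_\chi$'s and applying Jensen's inequality give the recursion $m_{n+1} \leq m_n - c\, m_n^2$ for $m_n := \mathbb{E}\mu(L(X_n))$, which forces $m_n \to 0$. By bounded convergence $\mathbb{E}\mu(L(X_\infty)) = 0$, so $X_\infty$ is almost surely an integer-valued measurable perfect $f$-matching; selecting any realization provides the required $\chi$.
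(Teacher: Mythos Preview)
Your approach is in the right spirit but takes a harder path than the paper, and the measurability step is a genuine gap as written.

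\textbf{The measurability gap.} You apply Proposition \ref{successor1} ``fiberwise'' over $\chi$ and assert that $\chi\mapsto\mathbb{Q}_\chi$ is Borel because the construction is explicit. This is not automatic: the construction passes through Lemma \ref{1end}, a connected toast structure, and a Choquet representation, and you must verify that all these choices can be made uniformly measurably in $\chi$ (with $k$ also depending measurably on $\mu(L(\chi))$). The paper does handle exactly this issue, but only later (Lemma \ref{successor2}), and not by tracking measurability: it replaces the random $\chi$ by a deterministic $\chi^*$ on the product graphing $G^*=R_f\times G$, applies Proposition \ref{successor1} once there, and then disintegrates. Without that device or an equivalent argument, your fiberwise construction is incomplete.

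\textbf{The quantitative estimate.} Your claim that optimizing $k$ in the proof of Proposition \ref{successor1} yields a decrement of order $c\,\mu(L(\chi))^2$ is correct (with $\varepsilon\sim\frac{1}{2k}$, the gain is $\Omega(\mu(L(\chi))/\sqrt{k})-O(\|f\|_1/(\theta k))$, maximized at $k\sim(\|f\|_1/(\theta\mu(L(\chi))))^2$), and Jensen then gives the recursion $m_{n+1}\le m_n-cm_n^2$. This part is fine once made precise.

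\textbf{Comparison with the paper.} The paper avoids both difficulties by a much simpler device: it works with a \emph{single} element $\chi_\alpha\in R_f$ at each stage, not a probability measure. Given $\chi_\alpha$, Proposition \ref{successor1} produces a random $X$; since the \emph{expected} inequality holds, some single extreme point $\chi_{\alpha+1}$ satisfies it---pick that one. This eliminates any need for measurable selection. To cope with the unquantified decrease, the paper uses \emph{transfinite} induction over $\alpha<\omega_1$: the telescoping bound (ii) plus Borel--Cantelli gives a.e.\ convergence at limit ordinals, and since $(\mu(L(\chi_\alpha)))_\alpha$ is strictly decreasing it must hit zero before $\omega_1$. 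Your countable-index, measure-valued route is viable in principle, but buys nothing here and costs you the fiberwise-measurability argument; the paper's single-element transfinite route is both shorter and cleaner for this theorem.
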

Theorem \ref{uniformly.bounded} can be strenghtened to say that $\tau$ is equal to a barycenter of a probability measure on $P_f$ concentrated on the set of measurable perfect $f$-matchings, see Remark \ref{barycenter.regular} after the proof of Theorem \ref{fmatching}.
\begin{proof}
We will find a perfect $f$-matching $\chi$ in $P_f$. 


By the Krein--Milman theorem, there exists $\chi_0\in P_f$ which is an extreme point of $P_f$. Starting with $\chi_0$, by induction, we define a sequence of measurable perfect fractional $f$-matchings $\chi_\alpha\in R_f$ for countable ordinals $\alpha$, so that
\begin{itemize}
    \item[(i)] $\mu(L(\chi_{\alpha+1}))<\mu(L(\chi_\alpha))$ if $\mu(L(\chi_\alpha))>0$,
    \item[(ii)] $\mu(\{e\in E(G):\chi_{\alpha+1}(e)\not=\chi_\alpha(e)\})<3(\mu(L(\chi_\alpha))-\mu(L(\chi_{\alpha+1})))$,
    \item[(iii)] $\chi_\gamma=\lim_{\alpha<\gamma}\chi_\gamma$ for limit $\gamma<\omega_1$.
\end{itemize}


Suppose we have constructed $\chi_\alpha$. By  Lemma \ref{successor1} there exists a random perfect fractional $f$-matching $X$ concentrated on $\mathrm{ext}(P_f)$ such that
$$    \mathbb{E} \int_{e\in E(G)\setminus L(\chi_\alpha)} |X(e)-\chi_\alpha(e)| < \frac{1}{2}\mathbb{E} 
    \int_{e\in L(\chi_\alpha)} |X(e)-\chi_\alpha(e)|
$$

Since $X$ is concentrated on $\mathrm{ext}(P_f)$, there exists an extreme point $\chi_{\alpha+1}$ of $P_f$ for which
$$   \int_{e\in E(G)\setminus L(\chi_\alpha)} |\chi_{\alpha+1}(e)-\chi_\alpha(e)| < \frac{1}{2} 
    \int_{e\in L(\chi_\alpha)} |\chi_{\alpha+1}(e)-\chi_\alpha(e)|
$$

Using  Lemma \ref{weird.estimate} (treating $\chi_{n+1}$ as a Dirac random perfect fractional matching) we get (i) and (ii) as needed.

At limit stages $\gamma$, note that
the condition (i) implies
the sequence $\mu(L(\chi_\alpha))$ for $\alpha<\gamma$ is strictly decreasing and (ii) implies that $\chi_\alpha$ is a.e. convergent by the Borel--Cantelli lemma. We put $\chi_\gamma\in R_f$ to be the limit of this sequence.

Note that there must exist $\alpha<\omega_1$, such that have $\mu(L(\alpha))=0$ and then this $\chi_\alpha$ is a measurable perfect $f$-matching a.e. as needed.
\end{proof}

\section{The one-lined case}
\label{one.lined.section}

For extreme points $\chi$ of $P_f$ which are one-lined, we will need a slightly different argument in order to decrease $\mu(L(\chi))$. The goal of this section is to prove the next proposition, which is a version of Proposition \ref{successor1} in the one-lined case.

\begin{proposition}\label{successor.oneline}
Let $G$ be a bipartite one-ended graphing and $f:V(G)\to\mathbb{N}$ be integrable.
Suppose that $\chi$ is a measurable perfect fractional $f$-matching such that
$\chi \in R_f$  
and $\mu(L(\chi))>0$. Assume that $\chi$ is one-lined on a set of components of positive measure. Let $\delta>0$.

There exists a random measurable perfect fractional matching $X$ which is a.s. in $R_f$ such that

\begin{itemize}
    \item[(i)] $\mathbb{E} \mu(L(X))< \mu(L(\chi))$  
    \item[(ii)] $\mathbb{E}\mu(\{e\in E(G):X(e)\not=\chi(e)\})\leq 3(\mu(L(\chi))-\mathbb{E}\mu(L(X)))$.
\end{itemize}
and $\|\mathrm{bar}(X)-\chi\|_2\leq\delta(\mu(L(\chi))-\mathbb{E} \mu(L(X)))$.
\end{proposition}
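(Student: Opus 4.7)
My approach is to adapt the random-sign strategy of Proposition~\ref{successor1} to the one-lined setting, with two modifications: since $X$ must stay in $R_f$, I am forced to use $\tfrac12$-alternating circuits rather than $\varepsilon$-circuits; and since $\mathrm{bar}(X)$ must be close to $\chi$ (condition~(iv)), the random modifications must cancel in expectation on the bulk of the edges they affect. The basic idea is, for each long line segment $S$, to pair two cycles $C^+$ and $C^-$ containing $S$ such that the $+\tfrac12$-circuit on $C^+$ and the $-\tfrac12$-circuit on $C^-$ flip the line edges of $S$ in opposite directions; a uniformly random choice between the two flips then cancels in expectation on $S$.

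Concretely, I would first apply Proposition~\ref{tiling} to obtain a connected toast structure $\mathcal{T}$ on $G$, and fix a large integer $n$ to be chosen later. For each tile $K \in \mathcal{T}$ at a sufficiently deep level, I select a line segment $S_K$ of length approximately $n$ lying in the annulus $A_K = K \setminus \bigcup_{L \subsetneq K} L$, which is connected by property~(3) of the toast. Using this connectivity, I build two cycles $C_K^+$ and $C_K^-$, each obtained by closing $S_K$ with a path in $A_K$ whose $\chi$-values alternate as $0,1,0,1,\ldots$ respectively $1,0,1,0,\ldots$; the corresponding $\pm\tfrac12$-alternating circuits yield two valid modifications $Y_K^+, Y_K^- \in R_f$. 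I then define $X$ by choosing, independently for each tile, whether to apply $Y_K^+$ or $Y_K^-$, each with probability $\tfrac12$. Since the annuli of distinct tiles are pairwise disjoint by property~(2), $X$ is a well-defined random element of $R_f$.

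The verification goes as follows. On each line edge $e$ of $S_K$, the two modifications send $\chi(e) = \tfrac12$ to $0$ and to $1$ respectively, so their average equals $\chi(e)$ and $\mathrm{bar}(X) - \chi$ vanishes on all line edges; the only contributions to $\mathrm{bar}(X) - \chi$ come from the closing edges, each of size $\pm\tfrac14$. Letting $c$ denote a uniform bound on the closing-path length and $n$ the segment length, a direct computation shows that $\|\mathrm{bar}(X) - \chi\|_2$ is of order $\sqrt{c/(n\mu(L(\chi)))}$ times the expected $L$-reduction, so choosing $n$ sufficiently large (of order $c/(\delta^2 \mu(L(\chi)))$) yields condition~(iv). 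Conditions~(i) and~(ii) then follow from Lemma~\ref{weird.estimate}: for $n > 2c$ we get $\mathbb{E}\int_{E(G)\setminus L(\chi)} |X - \chi| < \tfrac12 \mathbb{E}\int_{L(\chi)} |X - \chi|$, which is the hypothesis of the lemma.

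The main obstacle will be constructing the paired cycles $C_K^\pm$ with the prescribed alternating $\chi$-pattern on their closing paths and with the closing-path length $c$ uniformly controlled. At each endpoint $v$ of $S_K$ one must exit via a non-line edge of $\chi$-value $0$ (for $C_K^+$) and one of value $1$ (for $C_K^-$), and the paths must continue alternating these values throughout. This fails at ``degenerate'' line vertices where all non-line edges at $v$ share a single $\chi$-value---for example when $f(v) = 1$ and every non-line neighbor $w$ of $v$ satisfies $\chi(v,w) = 0$---which forbids one of the two flip directions at $v$. To handle such vertices I would either select segment endpoints among a positive-density subset of non-degenerate vertices, or route the closing paths further into $A_K$ using the superlinear growth of one-ended graphings (cf.\ Proposition~\ref{equi}); the connected toast ensures that these local choices can be made in a globally Borel-measurable way.
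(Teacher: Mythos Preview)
Your pairing scheme --- one closing path starting with a $\chi=0$ edge and one starting with a $\chi=1$ edge --- breaks down generically, not just at isolated bad vertices. If $f\equiv 1$ along the line (the most basic case), then at every line vertex the two line edges already contribute $\tfrac12+\tfrac12=1$, so \emph{every} non-line edge has $\chi=0$ and your $C_K^-$ never exists. Your proposed fixes (pick non-degenerate endpoints, route deeper into the annulus) do not help, because the obstruction is global along the line, not local. The paper resolves this differently: both cycles use $\chi$-\emph{augmenting} paths, i.e.\ closing paths that always start and end with $\chi=0$ edges and alternate $0,1,0,1,\ldots$. The two ``types'' are not distinguished by the first edge but by the \emph{parity} of the line segment enclosed, which determines which line edges receive $+\tfrac12$ versus $-\tfrac12$. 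Showing that both parities can be achieved over a.e.\ line segment is the real work (the paper's Lemma~\ref{main.one.lined}), and it hinges on a case split you have not made: if some positive-measure subset of $L(\chi)$ already admits a measurable perfect matching, the proposition follows directly by flipping a coin on that subset; if not, then a measurable parity dichotomy would itself produce such a matching, so both parities must occur a.e.

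Two further gaps. First, there is no uniform bound $c$ on closing-path length: the existence argument (Claim~\ref{almostall}) gives augmenting paths of uncontrolled length, so your estimate $\|\mathrm{bar}(X)-\chi\|_2 = O(\sqrt{c/n})$ has no content. The paper instead builds $k$ families of cycles, each non-line edge lying in at most four cycles of $\bigcup_i\mathcal{C}_i$, then splits these into $l$ edge-disjoint Borel subfamilies and picks one uniformly at random; this dilutes the closing-edge contribution by $1/l$ while the line-edge contribution is of order $k/l$, and taking $k$ large wins. Second, confining closing paths to toast annuli $A_K$ is too restrictive --- augmenting paths need not stay in any prescribed annulus --- and the paper does not attempt it; disjointness of the cycle families is instead engineered by an inductive merging procedure (Claim~\ref{local.refinement}) that absorbs unwanted intersections into previously built cycles.
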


The key to prove Proposition \ref{successor.oneline} is to find a family of cycles we can use for the rounding. First, we need to introduce some notation. 

\begin{definition} Let $G$ be a bipartite graphing and $f:V(G)\to\mathbb{N}$ be integrable. Suppose $\chi$ is an extreme point of $P_f$ such that $L(\chi)$ is one-lined everywhere.
\begin{itemize}
    \item[(i)]  Write $E_{\chi=1}=\{e\in E(G):\chi(e)=1\}$ and $E_{\chi=0}=\{e\in E(G):\chi(e)=0\}$. 
    
    
    \item[(ii)] A {\it $\chi$-augmenting path} is a path of odd length $x_0, \dots, x_{2j-1}$ such that we have $(x_{2l},x_{2l+1})\in E_{\chi=0}$ for $0 \leq l \leq j$ and  $(x_{2l-1},x_{2l})\in E_{\chi=1}$ for $1 \leq l \leq j-1$. 
    
    \item[(iii)] A \textit{$\chi$-augmenting cycle} is a cycle  obtained by joining a $\chi$-augmenting path  with endvertices on $L(\chi)$ and the finite subinterval of $L(\chi)$ joining the two endpoints of the $\chi$-augmenting path.
    
    \item[(iv)] For a $\chi$-augmenting cycle $C\subseteq E(G)$ we write  $C_{\chi=1}$ for the set of edges of $C$ that contains  every other edge of $C$ extending $E_{\chi=1}\cap C$, and analogously we define $C_{\chi=0}$.
    
    \item[(v)] $\chi$-augmenting cycles in the same component of $G$ can be naturally divided into two equivalence classes induced by the proper $2$-coloring of the edges of the infinite line (this is done in the single component). 
    Two $\chi$-augmenting cycles $A$ and $B$ are \textit{equivalent} if every $e\in A_{\chi=1}$ has the same color as every edge in $B_{\chi=1}$ (and the same holds for $A_{\chi=0}$ and $B_{\chi=0}$).
\end{itemize}
\end{definition}

In particular, the definition of  a $\chi$-augmenting path requires that all vertices along the path are distinct. Note, however, that since the graphing $G$ is bipartite, any walk satisfying the conditions of (ii) above contains a $\chi$-augmenting path obtained by removing the (even) cycles between repetitions of vertices. Note also that there are exactly two equivalence classes of $\chi$-augmenting cycles.

The following lemma gives us the family of cycles that will be useful in proving Proposition \ref{successor.oneline}.

\begin{lemma}\label{main.one.lined}
Let $G$ be a bipartite one-ended graphing and $f:V(G)\to\mathbb{N}$ be integrable. Let $\chi\in R_f$. Suppose that $L(\chi)$ is one-lined everywhere and there is no subset of $L(\chi)$ of positive measure that admits a measurable perfect matching.

For every $k\in\mathbb{N}$ and $\varepsilon>0$ there exists measurable collections $\mathcal{C}_1,\ldots,\mathcal{C}_k$ each consisting of $\chi$-augmenting cycles edge-disjoint in both equivalent classes such that
\begin{itemize}
\item every edge not in $L(\chi)$ is covered by at most four $\chi$-augmenting cycles in $\bigcup_{i=1}^k\mathcal{C}_i$,
\item for each $i\leq k$, every edge in $L$ is covered at most twice by $\chi$-augmenting cycles in $\mathcal{C}_i$,
\item $\mu(\bigcap_{i=1}^k\{e\in L(\chi):\mbox{e is covered 
by } \chi\mbox{-augmenting cycles}$ in  both equivalence classes in $\mathcal{C}_i\}) >\mu(L(\chi))(1-\varepsilon)$.
\end{itemize}
\end{lemma}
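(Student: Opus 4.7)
The plan is to apply Proposition \ref{tiling} to obtain a connected toast structure $\mathcal{T}$ for $G$, stratified into minimality-levels $\mathcal{M}_1, \mathcal{M}_2, \ldots$ as in the proof of Proposition \ref{tiling}, and to choose $m$ large enough that the tiles in levels $\mathcal{M}_m, \ldots, \mathcal{M}_{m+k}$ collectively contain all but an $\varepsilon$-fraction of the measure of $L(\chi)$ in their annular regions $\tilde T := T \setminus \bigcup\{K \in \mathcal{T}: K \subsetneq T\}$. The family $\mathcal{C}_i$ will consist of $\chi$-augmenting cycles confined to the region $\tilde T$ for $T \in \mathcal{M}_{m+i}$. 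Because the annuli of different levels are pairwise disjoint and a cycle in $\mathcal{C}_i$ uses non-$L(\chi)$ edges only inside the corresponding annular region, cycles from different families $\mathcal{C}_i, \mathcal{C}_{i'}$ automatically share no edges outside $L(\chi)$.

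Fix a tile $T \in \mathcal{M}_{m+i}$. The set $L(\chi) \cap \tilde T$ is a finite union of finite sub-intervals of the bi-infinite line $L(\chi)$, say $I_1, \ldots, I_s$ with boundary vertices $\{p_r, q_r\}$ for each $r$. To produce cycles of $\mathcal{C}_i$ inside $T$, I would pair up these endpoints by $\chi$-augmenting paths whose interior vertices lie in $\tilde T \setminus L(\chi)$, and close each pair using the $L(\chi)$-subinterval joining them. Since $\tilde T$ is connected by property (3) of a connected toast structure, such augmenting walks exist in principle; by the usual bipartite parity argument (as in the proof of Theorem \ref{odd}), each walk contains an honest $\chi$-augmenting path of the required parity pattern. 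The equivalence class of the resulting $\chi$-augmenting cycle is determined by the parity of the endpoints along $L(\chi)$, so covering $L(\chi) \cap \tilde T$ by cycles of both equivalence classes amounts to pairing the endpoints in two parity patterns, with pairings edge-disjoint within each class.

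The main obstacle is to ensure that in almost every tile, both equivalence classes can be covered simultaneously --- i.e., one can find edge-disjoint augmenting-path pairings of both parities covering almost all of the segments $I_1, \ldots, I_s$. My plan here is a contradiction argument: suppose that on a set of tiles of positive $\mu$-measure one of the two classes, say class $A$, cannot cover most of $L(\chi) \cap \tilde T$ within $T$. By K\"onig's theorem applied to the finite bipartite auxiliary graph on the class-$A$ endpoints (whose edges record augmenting paths in $\tilde T$ of the correct parity), this failure is witnessed by a Hall-deficient vertex cover which is Borel-definable in $T$. Patching these local covers together would produce a Borel-selectable subset of $L(\chi)$ of positive measure on which one of the two perfect matchings of $L(\chi)$ is canonically singled out, contradicting the hypothesis that no positive-measure subset of $L(\chi)$ admits a measurable perfect matching; the symmetric argument handles class $B$. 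Once both classes can be produced within each tile, the counting conditions follow from construction: inside each $\mathcal{C}_i$ the pairings within each equivalence class are edge-disjoint by greedy Borel selection, so any $L(\chi)$-edge lies in at most two cycles of $\mathcal{C}_i$ (one per class), while each non-$L(\chi)$ edge is associated with a unique tile-level annulus --- the bound of $4$ across $\bigcup_i \mathcal{C}_i$ comfortably absorbs the two equivalence classes and any unavoidable boundary overlap between two consecutive levels.
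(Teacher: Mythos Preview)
There is a genuine gap. Your proposal transplants the toast-structure argument from Lemma~\ref{1end} to the present setting, but the two situations are fundamentally different. In Lemma~\ref{1end} the cycles can use \emph{arbitrary} edges of the annulus $\tilde T$, so connectedness of $\tilde T$ (property~(3) of a connected toast) together with Lemma~\ref{subgraph} is enough. Here, by contrast, the cycles must be $\chi$-\emph{augmenting}: the off-line part must alternate between $E_{\chi=0}$ and $E_{\chi=1}$ edges. Connectedness of $\tilde T$ does not guarantee that such alternating paths exist between any two prescribed endpoints inside $\tilde T$; a vertex in $\tilde T$ may have all its $E_{\chi=1}$-neighbours in the inner tiles, so no augmenting path through it can stay in the annulus. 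Your sentence ``such augmenting walks exist in principle; by the usual bipartite parity argument'' asserts exactly the step that fails.

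The paper does \emph{not} use a toast structure for this lemma. Instead it first proves (Claims~\ref{almostall}--\ref{coveringlines}) that for a.e.\ component and every finite $S$ disjoint from the line, every finite subinterval of $L(\chi)$ is covered by $\chi$-augmenting cycles of \emph{both} equivalence classes avoiding $S$; this uses the ambient perfect fractional matching $\tau$ with $\supp(\tau)=E(G)$ and a mass-transport/pmp argument, not toast connectivity. The hypothesis that no positive-measure subset of $L(\chi)$ admits a measurable perfect matching enters precisely here: if only one equivalence class could cover some edge, that class would canonically label the edge as ``type (a)'' or ``type (b)'' and hence select a measurable perfect matching on a positive-measure set of lines. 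Your K\"onig-theorem contradiction sketch does not reproduce this mechanism. Once Claim~\ref{coveringlines} is available, the disjointness bounds are obtained not by spatial separation into annuli but by an induction on $k$ using a merging operation on cycles that share a ``substantial intersection'' (Claim~\ref{local.refinement}); this is how the paper achieves the at-most-four bound off $L(\chi)$ across all $\mathcal C_i$ simultaneously.
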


Before we prove Lemma \ref{main.one.lined}, let us see how it implies Proposition \ref{successor.oneline}.

\begin{proof}[Proof of of Proposition \ref{successor.oneline}]


By Lemma \ref{weird.estimate}, it is enough to find a random measurable perfect fractional matching $X$ which is a.s. in $R_f$ such that

\begin{equation}\label{here}
    \mathbb{E} \int_{e\in E(G)\setminus L(\chi)} |X(e)-\chi(e)|<\frac{1}{2}\mathbb{E}
    \int_{e\in L(\chi)} 
     |X(e)-\chi(e)|
\end{equation}
and $\|\mathrm{bar}(X)-\chi\|_2\leq\delta(\frac{1}{2}\int_{e\in L(\chi)} 
     |X(e)-\chi(e)|
     - \mathbb{E} \int_{e\in E(G)\setminus L(\chi)} |X(e)-\chi(e)|)$.
     
First, suppose that there is a subset of $K\subseteq L(\chi)$ of positive measure such there is a measurable perfect matching on  $K$. 
Let $Z\in\{-1,1\}$ be uniformly distributed and define $X$ as follows. For edges $e\in K$ let $X(e)=\chi(e)+\frac{1}{2}Z$ if $e$ belongs to the perfect matching, and $X(e)=\chi(e)-\frac{1}{2}Z$ if $e$ does not belong to the perfect matching, while for $e\notin K$ let $X(e)=\chi(e)$. Note that $\mathrm{bar}(X)=\chi$ and the left-hand side in (\ref{here}) is zero, while the right-hand side in (\ref{here}) is positive, so $X$ is as needed.


Thus, for the rest of the proof we can assume that there is no measurable perfect $f$-matching on any subset of $L(\chi)$ of positive measure

Write $G_1$ for the union of the components of $G$ where $L(\chi)$ consists of a single line. Let $L=L(\chi)\cap G_1$. By our assumption, $\mu(L)>0$. 

Let $k\in\mathbb{N}$ be big enough and $\varepsilon>0$ be small enough. Apply Lemma \ref{main.one.lined} to get families $\mathcal{C}_1,\ldots,\mathcal{C}_k$ of $\chi$-augmenting cycles such that every edge is covered by at most four $\chi$-augmenting cycles in $\bigcup_{i=1}^k\mathcal{C}_i$ and for every $i\leq k$ every edge in $L(\chi)$ is covered by at most two $\chi$-augmenting cycles from $\mathcal{C}_i$, and 
all but $\varepsilon$ edges from $L(\chi)$ are covered by exactly one $\chi$-augmenting cycle from each equivalence class in $\mathcal{C}_i$.

We may assume that the cycles in $\bigcup_{i=1}^k\mathcal{C}_i$ have bounded length. Using \cite[Proposition 4.6]{kst}, we can find  $l\in\mathbb{N}$ and partition $\bigcup_{i=1}^k\mathcal{C}_i$ into $l$ edge-disjoint Borel subfamilies $\mathcal{D}_1\ldots,\mathcal{D}_l$. 

For each $j\leq l$ we define the function $\zeta_j$ as follows. $\zeta_j:E(G)\to\{-\frac{1}{2},0,\frac{1}{2}\}$ and $\zeta_j(e)$ is equal to $0$ on the edges $e$ that do not belong to $\bigcup\mathcal{D}_j$ and for every $\chi$-augmenting cycle $C\in\mathcal{D}_j$ the function $\zeta_j$ is a $\frac{1}{2}$-circuit on $C$, where the chosen edge is $C$ belongs to $C_{\chi=0}$.

Let $I\in\{1,\ldots,l\}$ be a random variable with the uniform distribution and let $Z\in\{-1,1\}$ be an independent random variable with the uniform distribution. Consider the following random perfect fractional matching $$X=\chi+ Z \zeta_I.$$

Note that 
\begin{equation}\label{outside}
\mathbb{E}\mu(\bigcup\mathcal{C}_I\setminus L(\chi))\leq\frac{4}{l} ,
\end{equation}
as a.e. edge in $E(G)\setminus L(\chi)$ is covered at most four times by $\bigcup_{i=1}^k\mathcal{C}_i$. Thus, by (\ref{outside})
\begin{equation}\label{upperbound}
\mathbb{E}\int_{e\in E(G)\setminus L(\chi)} |X(e)-\chi(e)| \leq \frac{2}{l},    
\end{equation}
 since  for every edge $e$ we have $|X(e)-\chi(e)|\leq\frac{1}{2}$.


Note also that for every edge $e\in L(\chi)$ covered by $\mathcal{C}_I$
we have 
\begin{equation}\label{lowerbound0}
|X(e)-\frac{1}{2}|=\frac{1}{2}    
\end{equation}
 with probability $\frac{1}{2}$.
Thus, by Fubini's theorem we get
\begin{equation}\label{lowerbound1}
\mathbb{E}\int_{e\in L(\chi)}|X(e)-\frac{1}{2}|\geq\frac{k}{4l}\mu(L(\chi))(1-\varepsilon),
\end{equation}
which follows from (\ref{lowerbound0}) and the fact that $\mathbb{E}\mu(\bigcup\mathcal{C}_I\cap L(\chi))\geq\frac{k}{l}\mu(L(\chi))(1- \varepsilon)$. The latter is true because every family $\mathcal{C}_1,\ldots,\mathcal{C}_k$ covers $L(\chi)$ up to $\varepsilon$. 

To see that (\ref{here}) is satisfied, note that by (\ref{upperbound}) and (\ref{lowerbound1}) we get
\begin{equation}\label{here2}
    \frac{1}{2}\mathbb{E}
    \int_{L(\chi)} 
     |X(e)-\chi(e)|-\mathbb{E} \int_{E(G)\setminus L(\chi)} |X(e)-\chi(e)|\geq\frac{1}{l}(\frac{k}{4}\mu(L(\chi))(1-\varepsilon)-2)
\end{equation}
which is positive as long as $k$ is big enough and $\varepsilon$ is small enough.

Now let us look at the barycenter of $X$. Write $L'=\{e\in L(\chi): e$ is covered by one cycle  $C\in\mathcal{C}$  as a $C_{\chi=1}$ edge and by one cycle $C\in\mathcal{C}$ as a
        $C_{\chi=0}\mbox{ edge} \}$. By our assumption, we have $\mu(L')\geq\mu(L(\chi))(1-\varepsilon)$. 
Note that the barycenter can change only at the edges in edges in $L(\chi)\setminus L'$ and at the edges in $E(G)\setminus L(\chi)$ covered by $\bigcup\mathcal{C}_I$. Each of the former edges is covered at most $2k$ times and the measure of such edges is at most $\varepsilon$. Each of the latter edges is covered at most four times and their measure is at most $\frac{1}{l}$. The absolute value of the change is $\frac{1}{2}$ with probability $\frac{1}{2}$. Thus, 
we get 
$\|\mathrm{bar}(X)-\chi\|_2 \leq  \frac{1}{4}(\frac{2}{l}+k\varepsilon)$. 
Thus, by (\ref{here2}) we are done if
 $\frac{1}{4}(\frac{2}{l}+k\varepsilon) \leq\frac{\delta}{l}\big(\frac{k}{4}\mu(L(\chi))(1-\varepsilon)-2\big)
$,
which holds if $k$ is big enough and $\varepsilon$ is small enough, relative to $k$. 
        





\end{proof}





The rest of this section is devoted to the proof of Lemma \ref{main.one.lined}. 

\begin{proof}[Proof of Lemma \ref{main.one.lined}]

We will need a series of claims.

\begin{claim}\label{almostall}
 A.e. vertex in $V(G)$ is available via a $\chi$-augmenting path from $L(\chi)$ and has an $E_{\chi=1}$-neighbor available via a $\chi$-augmenting path from $L(\chi)$.
\end{claim}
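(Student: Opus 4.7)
I argue by contradiction. Suppose the set $U$ of vertices failing the claim has positive measure; the plan is to derive a contradiction with the lemma's hypothesis that no positive-measure subset of $L(\chi)$ admits a measurable perfect matching.

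Introduce two Borel reachability sets: $R$, the set of vertices reachable from the vertices incident to $L(\chi)$ via $\chi$-augmenting paths, and
\[ S \,=\, \{v : v \text{ is incident to an edge of } L(\chi)\} \,\cup\, \{w : \exists\, v \in R,\ (v,w) \in E_{\chi=1}\}, \]
the set of vertices reachable via even-length alternating walks starting with an $E_{\chi=0}$-edge at $L(\chi)$. Extending alternating walks by one edge yields the dual properties: every $E_{\chi=1}$-edge incident to $R$ has its other endpoint in $S$, and every $E_{\chi=0}$-edge incident to $S$ has its other endpoint in $R$.

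First I would reduce to the case $\nu(V(G) \setminus R) > 0$. If instead $\nu(V(G) \setminus R) = 0$ but a positive-measure set $B \subseteq R$ has all $E_{\chi=1}$-neighbors in $V(G) \setminus R$, then counting $E_{\chi=1}$-edges between $B$ and $V(G) \setminus R$ from both sides gives a contradiction: the count from the $B$-side is $\int_B d(v)\,d\nu$, where $d(v)$ is the number of $E_{\chi=1}$-neighbors of $v$, while from the $(V(G)\setminus R)$-side it is bounded by $\int_{V(G)\setminus R} f(v)\,d\nu = 0$ by integrability of $f$ and $\nu(V(G)\setminus R)=0$. Since $d(v)>0$ for a.e.\ $v\in B$ after restricting to generic vertices as in Remark~\ref{constant1}, we obtain the contradiction.

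With $\nu(V(G) \setminus R) > 0$, the final step is to construct a measurable perfect matching on a positive-measure subset of $L(\chi)$. Using a connected toast structure from Proposition~\ref{tiling}, within each sufficiently large tile $T$ one performs a K\"onig-type local analysis: the unreachable vertices inside $T$ form a local vertex cover relative to $\chi|_T$, which canonically determines a two-coloring of the finite line-segments of $L(\chi) \cap T$, i.e., a local phase choice. The refinement property of toast structures ensures that these local colorings are compatible across nested tiles and glue into a globally measurable perfect matching on a positive-measure subset of $L(\chi)$, contradicting the hypothesis. The main technical obstacle will be making the local K\"onig analysis truly canonical (so that it is independent of the choice of tile once $T$ is large enough) and verifying that the positive measure of $V(G) \setminus R$ propagates into a positive measure of phase-choices along the lines of $L(\chi)$.
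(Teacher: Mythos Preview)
Your proposal has a genuine gap at the crucial final step. You assert that when $\nu(V(G)\setminus R)>0$, a ``K\"onig-type local analysis'' inside toast tiles will ``canonically determine a two-coloring'' of the line segments of $L(\chi)$, yielding a measurable perfect matching on a positive-measure subset of $L(\chi)$. But you never carry this out, and you yourself identify it as the ``main technical obstacle.'' There is no evident mechanism by which unreachable vertices (which live \emph{off} the lines) force a canonical phase on the $\frac12$-edges of $L(\chi)$; K\"onig/Dulmage--Mendelsohn structure in a finite tile gives a vertex cover, not an alternating $2$-coloring of the half-line, and there is no reason such a local choice would be tile-independent. Without this link, the contradiction with the ``no measurable perfect matching on $L(\chi)$'' hypothesis is not established.

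The paper's proof takes a completely different route and never invokes that hypothesis (it is used only later, in Claim~\ref{coveringlines}). Instead it uses the ambient perfect fractional $f$-matching $\tau$ with $\supp(\tau)=E(G)$ coming from the setup of Theorem~\ref{fmatching} via Remark~\ref{constant1}. Writing $A$ for the reachable set, $B\subseteq A$ for vertices with no $E_{\chi=1}$-neighbor in $A$, and $D$ for the $E_{\chi=1}$-neighbors of $B$, one checks that $\chi$ is forced to equal $0$ on $E(B)\cup(\partial B\setminus\partial D)$ and $1$ on $E(D)\cup(\partial D\setminus\partial B)$. Plugging both $\chi$ and $\tau$ into the identity $\int_W f\,d\nu=\int_{\partial W}\sigma\,d\mu+2\int_{E(W)}\sigma\,d\mu$ and subtracting, the extremality of $\chi$ on these edge sets forces $\tau$ to be extremal there too; since $0<\tau<1$ a.e., the edge sets are null, hence $B\cup D$ is null. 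A second pass of the same extremality argument with $F=N(A)\setminus A$ shows $A$ is conull. Your counting reduction in the case $\nu(V(G)\setminus R)=0$ is fine, but the heart of the argument---handling $\nu(V(G)\setminus R)>0$---requires $\tau$, not the perfect-matching hypothesis.
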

\begin{proof}
Write $A$ for the set of vertices available via an augmenting path from $L(\chi)$ and $B$ for the set of vertices in $A$ not adjacent to a vertex in $A$ via an edge in $E_{\chi=1}$. Also, denote by $D$ the set of neighbors of $B$ via an edge in $E_{\chi=1}$. We will show that $A$ is co-null and both $B$ and $D$ are null. This will be enough since every vertex has an $E_{\chi=1}$-neighbor (recall that $f>0$).

We have $B\subseteq A$ and $D\cap A=\emptyset$ and we will show that both $B$ and $D$ have measure zero. By the definition of $D$, for every edge $e$ from $B$ to the complement of $D$ we have $\chi(e)=0$. 
Note that every $E_{\chi=0}$-edge from $D$ goes to $A$, since there is a $\chi$-augmenting path to its other endvertex. Moreover, every $E_{\chi=0}$-edge from $D$ goes to $B$, else there would be a $\chi$-augmenting path to its endvertex in $D$. The latter implies that for every edge $e$ from $D$ to the complement of $B$ we have $\chi(e)=1$. In other words, $\chi(e)=1$ for all $e\in E(D)\cup\partial D\setminus \partial B$ and $\chi(e)=0$ for all $e\in E(B)\cup\partial B\setminus \partial D$.

Recall that for every set $W\subseteq V(G)$ and every measurable perfect fractional $f$-matching $\sigma$ we have $\int_W fd\nu=\int_{\partial(W)} \sigma d\mu+2\int_{E(W)}\sigma d\mu$. 
Then we can write 
$\int_{D} fd\nu-\int_{B} fd\nu$ as 

\begin{eqnarray*}
 \int_{\partial D\setminus \partial B}\chi d\mu + 2\int_{E(D)} \chi d\mu - \int_{\partial B\setminus \partial D}\chi d\mu - 2\int_{E(B)}\chi d\mu
\\ 
= \int_{\partial D\setminus \partial B}\tau d\mu + 2\int_{E(D)}\tau d\mu - \int_{\partial B \setminus \partial D}\tau d\mu - 2\int_{E(B)} \tau d\mu
\end{eqnarray*}
Since the first expression is maximal among the values where $\chi$ is replaced by another perfect fractional $f$-matchings, we must have $\tau(e)=1$ for a.e. $e\in E(D)\cup\partial D\setminus \partial B$ and $\tau(e)=0$ for a.e. $e\in E(B)\cup\partial B\setminus \partial D$. However, by our assumption $\tau(e)\in\{0,1\}$ happens on a null set of edges $e$. Thus, $\mu(\partial(B\cup D))=0$. The set $B\cup D$ is disjoint from $L(\chi)$, which implies that $B\cup D$ is a nullset. 

 Now, since $B$ is a nullset essentially every edge leaving $A$ is not in $E_{\chi=0}$, or
else the other endvertex would be available via a $\chi$-augmenting path from $L(\chi)$. In other words $\chi(e)=1$ for every edge $e\in \partial(A)$.
Let $F=N(A)\setminus A$ and note that every $E_{\chi=0}$ edge from $F$ must go to $A$, since there is a $\chi$-augmenting path to its other endvertex. Thus $\chi(e)=1$ for every edge $e\in \partial F\cup E(F)$. As previously,
we can write $\int_{F} fd\nu$ as $$\int_{\partial F} \chi d\mu+2\int_{E(F)}\chi d\mu=\int_{\partial F} \tau d\mu+2\int_{E(F)}\tau d\mu.$$
Again, since $\chi(e)=1$ for all $e\in\partial F\cup E(F)$, the left hand side is maximal possible among the values where $\chi$ is replaced by another perfect fractional $f$-matching bounded by $1$, we must have $\tau(e)=1$ for a.e. $e\in\partial F\cup E(F)$.  Hence $F$ is a nullset. Therefore a.e. vertex belongs to $A$.
\end{proof}

\begin{claim}\label{almostall2}
For a.e. component $C$ of $G$, for every finite subset $S$ of $C$ the following is true
\begin{enumerate}
    \item\label{uno} All but finitely many vertices of $C$ are available from $L(\chi)$ via a $\chi$-augmenting path contained in $C\setminus S$ and have an $E_{\chi=1}$-neighbor available from $L(\chi)$ via a $\chi$-augmenting path contained in $C\setminus S$.
  \item\label{due} For every finite interval $J\subseteq L(\chi)\cap C$ there exists a $\chi$-augmenting path connecting the two semi-infinite paths of $L(\chi)\setminus J$ in $C \setminus S$.
\end{enumerate}
\end{claim}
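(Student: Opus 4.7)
The plan is first to move from the a.e.-vertex statement of Claim \ref{almostall} to a statement about a.e. component. Let $N \subseteq V(G)$ be the $\nu$-null exceptional set produced by Claim \ref{almostall}. Since the connected-component equivalence relation $E$ on $V(G)$ is countable and arises from countably many measure-preserving partial bijections by Feldman--Moore, the $E$-saturation $[N]_E$ is also $\nu$-null. Hence for $\nu$-a.e. vertex $v$ the whole component $C = [v]_E$ is disjoint from $N$: every vertex of $C$ is available from $L(\chi)\cap C$ via a $\chi$-augmenting path and has an $E_{\chi=1}$-neighbor also available. Fix such a generic component $C$: by the one-ended hypothesis on $G$ it is one-ended, and by the one-lined hypothesis in Lemma \ref{main.one.lined}, $L(\chi) \cap C$ is a single bi-infinite line.

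For part (1), fix a finite $S \subseteq C$. Since $C$ is one-ended and $S$ is finite, $C \setminus S$ has a unique infinite connected component $C^*$, and $V(C) \setminus V(C^*)$ is finite, so it suffices to prove that only finitely many vertices of $C^*$ fail to be available via a $\chi$-augmenting path in $C \setminus S$. I would proceed by induction on $|S|$. The base case $|S|=0$ follows from genericity of $C$. For the inductive step it is enough to remove one vertex $s$ at a time and argue that the set $U$ of vertices in $C^*$ whose every $\chi$-augmenting path to $L(\chi) \cap C$ passes through $s$ is finite. To this end I would transfer the flow-maximality argument from the proof of Claim \ref{almostall} into the local setting: if $U$ were infinite, one could build the analogues of the sets $B$ and $D$ there inside $C^*$ and derive an infinite flow imbalance, contradicting the extremality of $\chi$ with respect to $\tau$ (using $\supp(\tau) = E(G)$) up to a finite defect coming from the single removed vertex $s$. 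The second assertion of (1), that the cofinitely many good vertices of $C$ also have an $E_{\chi=1}$-neighbor available in $C \setminus S$, then follows because only finitely many vertices of $C$ have any $E_{\chi=1}$-neighbor inside the finite set $S$.

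For part (2), fix finite $J \subseteq L(\chi) \cap C$ and finite $S \subseteq C$, and let $R_1, R_2$ be the two semi-infinite rays of $L(\chi) \setminus J$. Applying part (1) to the finite set $S \cup J$, all but finitely many vertices of $C$ --- and hence infinitely many vertices of each ray $R_1$, $R_2$ --- are available from $L(\chi) \cap C$ via $\chi$-augmenting paths in $C \setminus (S \cup J)$. It remains to exhibit a vertex $v \in R_1$ whose augmenting path in $C \setminus (S \cup J)$ terminates on $R_2$. Suppose instead that every such path ends back on $R_1 \cup (L(\chi) \cap S)$. Then the $R_1$-reachable closure under $\chi$-augmenting paths in $C \setminus S$ excludes $R_2$, and re-running the flow-maximality argument of Claim \ref{almostall} in this asymmetric setting forces the complementary set containing $R_2$ to be null in the generic component --- a contradiction, since $R_2$ is an infinite ray of $L(\chi) \cap C$. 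This yields the required $\chi$-augmenting path connecting $R_1$ and $R_2$ in $C \setminus S$.

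The main obstacle is the localization of the global measure-theoretic extremality argument of Claim \ref{almostall} to a per-component combinatorial statement: in that claim the contradiction was phrased in terms of positive-measure flow imbalance, whereas here the imbalance lives inside a single component of measure zero, and the finite discrepancy coming from $S$ and $J$ must be carefully controlled. Combining this control with the one-endedness of $C$ --- which forces any infinite alternating propagation to reconnect through the unique end and hence back to $L(\chi)$ --- should suffice, but making it rigorous is the delicate step. As a fallback I would consider strengthening Claim \ref{almostall} to produce infinitely many internally disjoint $\chi$-augmenting paths from a.e. vertex, which would make robustness under removal of any finite $S$ automatic.
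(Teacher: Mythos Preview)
Your proposal has a genuine gap in part (1), and the same issue propagates to part (2). You want to localize the flow-maximality argument of Claim~\ref{almostall} to a single component: the set $U$ of vertices cut off by removing one vertex $s$ should be finite because otherwise you would get an ``infinite flow imbalance, contradicting the extremality of $\chi$ with respect to $\tau$''. But the argument in Claim~\ref{almostall} is irreducibly measure-theoretic: it compares $\int_{\partial D\setminus\partial B}\chi\,d\mu$ with $\int_{\partial D\setminus\partial B}\tau\,d\mu$ and uses that $\tau\in(0,1)$ a.e.\ to conclude certain sets are $\nu$-null. Inside a single component everything already has measure zero, so there is no contradiction to extract; the ``finite defect coming from $s$'' cannot be compared to anything. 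Your own final paragraph correctly identifies this as the obstacle, but your sketch does not overcome it.

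The paper's argument for (1) avoids this entirely with a different idea. For each vertex $x$ fix the lexicographically first shortest $\chi$-augmenting path from $L(\chi)$ to $x$, and let $F$ be the graphing whose edges are the consecutive pairs on these canonical paths. The key measure-theoretic step is global, not per-component: if the set $K\subseteq L(\chi)$ of vertices with infinite $F$-component had positive measure, one could choose Borel selectors $g_n\colon K\to V(G)$ with disjoint images, contradicting pmp. Hence a.e.\ vertex of $L(\chi)$ has finite $F$-component. This is what localizes: in a generic component, for any finite $S$, the union of $F$-components meeting $S$ is a \emph{finite} set $S'$, and vertices outside $N_{E_{\chi=1}}(S')$ still have their canonical path intact. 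For part (2) the paper then argues purely combinatorially: with $A_1,A_2$ the sets reachable from the two rays in $C\setminus S$, either they intersect, or some $E_{\chi=1}$-edge crosses between them, or one-endedness forces infinitely many edges between $A_1$ and $A_2$ far from the finite complement, and the $E_{\chi=1}$-neighbor clause of (1) lets you splice two augmenting paths into one. No further appeal to measure is needed.
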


\begin{proof}

By Claim \ref{almostall} we can assume that every vertex in $V(G)$ is available via an augmenting path from $L(\chi)$ and has an $E_{\chi=1}$-neighbor available via such a path.

\medskip

(\ref{uno}) For every positive integer $k$ consider $f_k: V(G) \setminus L(\chi) \rightarrow V(G) \setminus L(\chi)$ mapping $x$ to the $k$-th vertex of the lexicographically first (with respect to any Borel coloring of directed edges with natural numbers) shortest augmenting path from $L(\chi)$ to $x$. Write $F$ for the graphing where we put an edge between $f_k(x)$ and $f_{k+1}(x)$ for every $x\in V(G)$. Note that $F$ is acyclic because $G$ is bipartite and thus the $F$-component of every point in $L(\chi)$ is a tree. 
Let $K\subseteq L(\chi)$ denote the set of points in $L(\chi)$ whose $F$-component is infinite. We claim that $K$ has measure zero. Towards a contradiction, suppose that $K$ has positive measure. 
 For each positive integer $n$ let $g_n$ be a Borel function on $K$ such that $g_n(x)$ is an element of the $F$-component of $x$, for $x\in K$, and $g_n(x)\not=g_m(x)$ for $n\not=m$. Considering the disjoint sets $g_n(I)$ we get a contradiction by the assumption that the graphing is pmp, as the sets $g_n(I)$ must have equal measure.

Thus, for a.e. point in $L(\chi)$ its $F$-component is finite. Given any finite set $S\subseteq C$, the union $S'$ of $F$-components intersecting $S$ is then finite, as well as the set $S''$ consisting of all $E_{\chi=1}$-neighbors of vertices in $S'$. Since every vertex has at least one $E_{\chi=1}$-neighbor (recall that $f>0$), any vertex which is not in $S''$ satisifes (\ref{uno}). 

\medskip

(\ref{due}) By (\ref{uno}), all but finitely many vertices are available from $L(\chi)$ via a $\chi$-augmenting path contained in $C\setminus S$ and have an $E_{\chi=1}$-neighbor available from $L(\chi)$ by such path.  Let $A_1$ denote the set of such vertices available from one of the semi-infinite paths of $L(\chi)\setminus J$ and $A_2$ the set of vertices available from the other semi-infinite path. 
Every vertex $v\in A_1\cup A_2$ has an $E_{\chi=1}$-neighbor $v'$ which is in $A_1\cup A_2$. If $v$ and $v'$  available from different infinite-subpaths, then we can join their $\chi$-augmenting paths via $(v,v')$ and obtain a $\chi$-augmenting path disjoint from $S$ joining the two semi-infintie paths of $L(\chi)\setminus J$. So we can assume that for every vertex $v\in A_1\cup A_2$ each its $E_{\chi=1}$-neighbor $v'$ which is in $A_1\cup A_2$ is available from the same semi-infinite path of $L(\chi)\setminus J$.

If $A_1\cap A_2\not=\emptyset$, then let $u\in A_1\cap A_2\not=\emptyset$. Choose a $E_{\chi=1}$-neighbor $u'$ of $u$ such that $u'\in A_1\cup A_2$. We can then find two $\chi$-augmenting paths disjoint from $S$ joining $u$ and $u'$ with distinct semi-infinite paths of $L(\chi)\setminus J$ and joining them along $(u,u')$ gives us the desired path.

Thus, suppose that $A_1$ and $A_2$ are disjoint.   Put $S'=C\setminus(A_1\cup A_2) $. $S'$ is finite and
since $C$ is one-ended and $A_1,A_2$ each contain a semi-infinite path, we know that there are infinitely many edges $e=(u,v)$ with $u\in A_1$ $v\in A_2$, and in particular there is such an $e$ with neither of its endvertices in $N(S')$. If $e\in E_{\chi=1}$ then joining the augmenting paths to $u$ and $v$ via $e$ gives the desired augmenting path joining the two semi-infinite paths.  If $e \in E_{\chi=0}$ then, by our assumption, we can find $E_{\chi=1}$-neighbor $u'\in A_1$ of $u$ and $v'\in A_2$ of $v$. Joining the $\chi$-augmenting paths to $u'$ and $v'$ with the three edges $(u,u')$, $(u,v)$ and $(v,v')$ gives us the desired path.

\end{proof}




\begin{claim}\label{coveringlines}
For almost every component $C$ of $G$, for every finite interval $J\subseteq L(\chi)\cap C$ and every finite subset $S \subseteq C \setminus V(L)$ there are $\chi$-augmenting cycles  in both equivalence classes covering $J$ and disjoint from $S$.
\end{claim}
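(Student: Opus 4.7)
The plan is to adapt the proof of Claim \ref{almostall2}(\ref{due}), keeping track of the colors of augmenting-path endpoints. First I would fix a proper $2$-coloring of $G$ with color classes $c_0, c_1$: since any $\chi$-augmenting path has odd length, its endpoints have opposite colors, and consequently the equivalence class of a $\chi$-augmenting cycle covering $J$ is determined by the color of the augmenting path's endpoint on $\ell_+$, where $\ell_+, \ell_-$ denote the two semi-infinite paths of $L(\chi) \setminus J$. It therefore suffices to construct, for each color $c \in \{c_0, c_1\}$, a $\chi$-augmenting path in $C \setminus S$ joining $\ell_+$ to $\ell_-$ whose $\ell_+$-endpoint has color $c$.

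I would first apply Claim \ref{almostall2}(\ref{uno}) to enlarge $S$ to a finite $S'$ such that every vertex of $C \setminus S'$ is available from $L(\chi)$ via a $\chi$-augmenting path in $C \setminus S'$ and has an $E_{\chi=1}$-neighbor available the same way. Let $A_1, A_2$ denote the vertices reachable by such paths starting on $\ell_+, \ell_-$ respectively. The key local construction is that, given $u, u'$ with $(u,u') \in E_{\chi=1}$, $u \in A_1$ and $u' \in A_2$, concatenating an augmenting path $\ell_+ \to u$ with the edge $(u, u')$ and an augmenting path $u' \to \ell_-$ produces an augmenting path $\ell_+ \to \ell_-$ whose $\ell_+$-endpoint has color opposite to that of $u$. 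Since $u, u'$ are $E_{\chi=1}$-adjacent in a bipartite graph, they have opposite colors; interchanging their roles (which is feasible when both lie in $A_1 \cap A_2$) then yields cycles in both equivalence classes simultaneously from a single pair $(u, u')$.

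The main obstacle is the ``problematic case'' of Claim \ref{almostall2}(\ref{due})'s proof, in which a chosen $u \in A_1 \cap A_2$ has its $E_{\chi=1}$-neighbors in $A_1 \cup A_2$ on only one side, giving only one equivalence class from that pair. I would handle this by picking a second vertex $\tilde u \in A_1 \cap A_2$ of the opposite color to $u$ and running the same combination argument with $\tilde u$, which yields the missing equivalence class. The existence of such $\tilde u$ rests on the bipartite structure together with the cofiniteness of $A_1 \cup A_2$ in $C \setminus S'$ (from Claim \ref{almostall2}(\ref{uno})): every vertex of $A_1 \cup A_2$ has an $E_{\chi=1}$-neighbor in $A_1 \cup A_2$ of the opposite color, so both colors are abundantly represented, and a connectivity/pigeonhole argument on the one-ended component $C$ then delivers $\tilde u$ of each color in $A_1 \cap A_2$. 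The case $A_1 \cap A_2 = \emptyset$ is handled analogously, using an edge between $A_1$ and $A_2$ and its $E_{\chi=1}$-neighbors to realize both configurations of endpoint colors.
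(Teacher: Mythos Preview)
Your approach differs fundamentally from the paper's, and there is a genuine gap. The paper does \emph{not} try to show componentwise that both equivalence classes of $\chi$-augmenting cycles are always available. Instead it argues indirectly: letting $B\subseteq L(\chi)$ be the set of edges on lines where the conclusion fails, it observes (using Claim~\ref{almostall2}(\ref{due})) that for each edge $e$ of such a line there is some witnessing pair $(J,S)$ for which all admissible augmenting cycles place $e$ in the same role, either always $C_{\chi=1}$ or always $C_{\chi=0}$. Adjacent edges on the line receive opposite roles, so this partition is a canonical---hence measurable---perfect matching on $B$. This contradicts the standing hypothesis of Lemma~\ref{main.one.lined} that no positive-measure subset of $L(\chi)$ admits a measurable perfect matching. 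The ``almost every'' in the claim is thus bought with that measure-theoretic hypothesis, not by a pointwise construction.

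Your proposal never invokes this hypothesis; you are attempting to prove the stronger statement that in \emph{every} component satisfying Claim~\ref{almostall2} both equivalence classes appear. The step where you assert the existence of $\tilde u\in A_1\cap A_2$ of the opposite color to $u$---and the parallel ``analogous'' step when $A_1\cap A_2=\emptyset$---is not justified: that both colors are abundant in $A_1\cup A_2$ says nothing about $A_1\cap A_2$, and ``a connectivity/pigeonhole argument'' is a placeholder, not an argument. More importantly, there is no reason to expect a purely componentwise proof to go through here: nothing you have written rules out a single one-ended component in which every $\chi$-augmenting cycle lies in one equivalence class, and the paper's proof makes no such claim. What the paper shows is exactly that the union of such bad components must be null, and the mechanism for that is the no-measurable-matching hypothesis you have omitted.
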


\begin{proof}
Let $B$ be the set of those edges $e$ in $L$ such that it is not the case that for every finite interval $J\subseteq L(\chi)$ containing $e$ and every finite subset $S$ of $E(G)$ there are $\chi$-augmenting cycles in both equivalence classes covering $J$ and disjoint from $S$.
     
Note that for each line in $L(\chi)$ either the line is contained in $B$ or disjoint from $B$. Thus, we need to show that the set of lines contained by $B$ is a nullset.
Part (2) of Claim \ref{almostall2} implies that in a.e. component for every finite subset disjoint from the line, every finite subinterval of the line can be covered by a $\chi$-augmenting cycle in one of the equivalence classes disjoint from this set. If a line in $L(\chi)$ in such a component is contained in $B$, then for every edge $e$ in that line there is a  finite interval $J\subseteq L(\chi)$ containing $e$ and a finite subset $S$ of that component such that 
exactly one of the followings holds:
  \begin{itemize}
      \item[(a)] either for every  $\chi$-augmenting cycle $A$ covering $J$ and disjoint from $S$ we have $e\in A_{\chi=1}$
      \item[(b)] or for every  $\chi$-augmenting cycle $A$ covering $J$ and disjoint from $S$ we have $e\in A_{\chi=0}$.
  \end{itemize}
Note that this partitions a.e. edge of $B$ into type (a) or (b), and gives a measurable perfect matching on $B$, contradicting our assumption.
  \end{proof}

Given two $\chi$-augmenting cycles that share the same edge, we will want to obtain a $\chi$-augmenting cycle that uses that edge and parts of the two cycles. In order to define this correctly, we need the following definition. 

\begin{definition}
Given two equivalent $\chi$-augmenting cycles, we say that an edge not in $L(\chi)$ belongs to their \textit{substantial intersection}, if considering the orientations of the subpaths of the cycles on the line towards the same end, the extension of these orientations outside $L(\chi)$ agrees on this edge.

\end{definition}
Note that in the above definition it does not matter which orientation of the line $L(\chi)$ we choose. 

Crucially, note that if two equivalent cycles have an edge in their substantial intersection, then they can be merged into one $\chi$-augmenting cycle in (possibly) four ways as follows. Let $A$ and $B$ be two equivalent $\chi$-augmenting cycles and $e$ be an edge in their substantial intersection. Orient the subpaths of the cycles $A$ and $B$ on the line towards the same end and extend this orientation outside $L(\chi)$  to the union of the  $\chi$-augmenting paths. Using this orientation, write $A_1$ for the subpath of $A\setminus L(\chi)$  starting at the vertex on the line preceding $e$ in the orientation and ending at a vertex of $e$. Similarly define $A_2$ as the subpath of $A\setminus L(\chi)$ starting at a vertex of the edge $e$ following $e$ in the orientation and ending at a vertex of the line. Define $B_1$ and $B_2$ similarly for $B$. To \textit{merge} $A$ and $B$ we define the $\chi$-augmenting path that starts with either $A_1$ or $B_1$, is followed by $e$ and then by either $A_2$ or $B_2$. Then we close this $\chi$-augmenting path with the interval on $L(\chi)$ between its endvertices.



\begin{claim}\label{local.refinement}
 Let $\mathcal{C}$ be a finite collection of equivalent $\chi$-augmenting cycles that do not have substantial intersection outside $L(\chi)$ and $A$ be a $\chi$-augmenting cycle. Let $J\subseteq L(\chi)$ be a finite interval such that $J\subseteq A\cap L(\chi)$ and $J\cap\bigcup\mathcal{C}=\emptyset$. Then there exists a $\chi$-augmenting cycle $A'$ obtained by merging $A$ with at most two cycles $C$ and $D$ of $\mathcal{C}$ such that $J \subseteq A' \cap L(\chi)$, and the cycle $A'$ does not have substantial intersections with cycles in $\mathcal{C}\setminus\{C,D\}$. 

  
\end{claim}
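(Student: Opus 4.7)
The plan is to walk along the off-line path $P_A=A\setminus L(\chi)$ in the orientation induced by $A$ and list the substantial intersections of $A$ with cycles in $\mathcal{C}$ in the order they appear as $f_1,\ldots,f_k$. Since no two cycles in $\mathcal{C}$ substantially intersect each other, each $f_i$ determines a unique $D_i\in\mathcal{C}$. The hypothesis $J\cap\bigcup\mathcal{C}=\emptyset$ forces each $D_i$'s on-line interval to lie either entirely to the left or entirely to the right of $J$ on $L(\chi)$. If $k=0$, take $A'=A$; otherwise, I would split into three cases according to the positions of $D_1$ and $D_k$ relative to $J$.

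If $D_1$ lies to the left of $J$, a single merge of $A$ with $D_1$ at $f_1$ using the option $A_1+f_1+(D_1)_2$ produces the desired cycle $A'$: the resulting on-line interval extends to the left of $J$ (while still containing $J$), and the off-line path consists of the piece $A_1$, which is free of substantial intersections with $\mathcal{C}$ by the minimality of $f_1$, together with a piece of $P_{D_1}$, which is free of substantial intersections with $\mathcal{C}\setminus\{D_1\}$ by the hypothesis on $\mathcal{C}$. Symmetrically, if $D_k$ lies to the right of $J$, a single merge at $f_k$ with $D_k$ using $(D_k)_1+f_k+A_2$ achieves the result, with $A_2$ being clean past the last substantial intersection.

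The remaining case, and the heart of the argument, is when $D_1$ lies to the right of $J$ and $D_k$ lies to the left of $J$. Labelling each $D_i$ by the side of $J$ on which it lies and running through the sequence, a discrete intermediate value argument produces an index $j$ with $D_j$ to the right and $D_{j+1}$ to the left of $J$. The subpath of $A$ between $f_j$ and $f_{j+1}$ is free of substantial intersections with $\mathcal{C}$, since these indices are consecutive. I would then perform two iterated merges: first, merge $A$ with $D_j$ at $f_j$ using $(D_j)_1+f_j+A_2$ to obtain an intermediate cycle $A^{(1)}$ whose interval is extended on the right by $D_j$ (so it still contains $J$), but whose $A_2$-portion retains $f_{j+1},\ldots,f_k$. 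Then merge $A^{(1)}$ with $D_{j+1}$ at $f_{j+1}$ using $A^{(1)}_1+f_{j+1}+(D_{j+1})_2$. Unpacking shows $A^{(1)}_1$ decomposes as $(D_j)_1+f_j+(\text{subpath of }A\text{ from }f_j\text{ to }f_{j+1})$, and each of these pieces is clean with respect to $\mathcal{C}\setminus\{D_j\}$; combined with $(D_{j+1})_2$, which is clean with respect to $\mathcal{C}\setminus\{D_{j+1}\}$, the final cycle $A'$ covers $J$ (its on-line interval being $[p_{D_{j+1}},q_{D_j}]$) and has no substantial intersections with $\mathcal{C}\setminus\{D_j,D_{j+1}\}$.

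The main technical obstacle is the careful bookkeeping of orientations to ensure that substantial intersections are preserved or removed exactly as intended --- in particular, that the edge $f_{j+1}$ is traversed by $A^{(1)}$ in the same direction as by $D_{j+1}$ so the second merge is legitimate, and that the chosen merge options at each step indeed yield on-line intervals containing $J$ rather than lying to one side of it. The use of consecutive indices $j,j+1$ with opposite left/right labels is essential: a less careful pairing would trap intermediate $f_i$'s inside the retained portion of $P_A$, precluding a clean conclusion with only two cycles.
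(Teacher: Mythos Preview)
Your strategy is the paper's: walk along $P_A$, record the substantial intersections in order, locate a clean subpath of $P_A$ flanked by two intersections whose cycles lie on opposite sides of $J$, and merge with those two. The problem is that your left/right bookkeeping is systematically inverted, and this makes the one-merge cases fail as written.

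Concretely, orient the line left to right so that $A_1$ begins at the left endpoint $p_A$ of $A$'s interval and $(D_1)_2$ ends at the right endpoint $q_{D_1}$ of $D_1$'s interval. The merge $A_1+f_1+(D_1)_2$ therefore produces an off-line path from $p_A$ to $q_{D_1}$, and the on-line closure is the segment between these two points. If $D_1$ lies to the \emph{left} of $J$, then both $p_A$ and $q_{D_1}$ are $\leq \min J$, so the resulting interval misses $J$ entirely. The one-merge argument works precisely when $D_1$ lies to the \emph{right} of $J$; symmetrically, $(D_k)_1+f_k+A_2$ works when $D_k$ lies to the \emph{left}. Consequently the genuine two-merge case is ``$D_1$ left and $D_k$ right'', and the consecutive pair you need is $D_j$ left, $D_{j+1}$ right. (In your two-merge argument as written, your intermediate claim that $A^{(1)}$ still covers $J$ is also false for the same reason, though the final $A'$ happens to cover $J$ anyway.)

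You flagged exactly this hazard in your last paragraph; once the left/right labels (equivalently, the subscripts $1$/$2$ in the merge choices) are swapped throughout, your proof is correct and coincides with the paper's.
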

\begin{proof}
 
  Note that $J$ divides $L(\chi)$ into two infinite components, so we refer to these two components as to the \textit{two sides} of $L(\chi)\setminus J$. 
For simplicity, suppose that there are $\chi$-augmenting cycles in $\mathcal{C}$ with vertices on  both sides of $L(\chi)\setminus J$ which are equivalent to $A$ and substantially intersect $A$ on $E(G)\setminus L(\chi)$ 
(if such cycles have vertices on $L(\chi)$ only on one side of $L(\chi)\setminus J$, then the proof is analogous to the proof below). 
 
Choose $\chi$-augmenting cycles $B$ and $C$ in $\mathcal{C}$ with vertices on $L(\chi)$ on different sides of $L(\chi)\setminus J$ 
such that $A$ is equivalent to $B$ and $C$, substantially intersects $B$ at an edge $e_1$ and $C$ at an edge $e_2$ and the edges between $e_1$ and $e_2$ in $A\setminus L(\chi)$ do not belong to the substantial intersection of $A$ and any cycle in $\mathcal{C}$ equivalent to $A$.  We merge $B$, $C$ and $A$ as follows.  Outside of the line $L(\chi)$ we first take the edges of $B\setminus L(\chi)$, followed by the edges of $A$ between $e_1$ and $e_2$ and then followed by the edges of $C$ outside of $L(\chi)$. This is a $\chi$-augmenting path joining two vertices of $L(\chi)$ and in order to get a $\chi$-augmenting cycle we add the vertices of the interval of $L(\chi)$ between these vertices.

Note that $A'$ covers $J$  since $B$ and $C$ are disjoint from $J$ and on different sides of $J$. 
Note that the cycle $A'$ does not intersect substantially any equivalent cycle in $\mathcal{C}\setminus\{B,C\}$ on $E(G)\setminus L(\chi)$ since cycles from $\mathcal{C}$ do not have substantial intersections between equivalent cycles outside of the line.
\end{proof}

Now, we prove the statement of Lemma \ref{main.one.lined}. More precisely, by induction on $k$ we prove that for every $\varepsilon>0$ there exist Borel families of $\chi$-augmenting cycles $\mathcal{C}^k_1, \dots ,\mathcal{C}^k_k$ such that

\begin{itemize}
\item there are no edges in $E(G)\setminus L(\chi)$ that belong to a substantial intersection of equivalent cycles in $\bigcup_{i=1}^k\mathcal{C}^k_i$,
\item for every $i\leq k$ there are no edges on $L(\chi)$ that belong to an intersection of equivalent cycles in $\mathcal{C}^k_i$
\item $\mu(\bigcap_{i=1}^k\{e\in L(\chi):\mbox{e is covered 
by } \chi\mbox{-augmenting cycles}$ in  both equivalence classes in $\mathcal{C}^k_i\}) >\mu(L(\chi))-\varepsilon$.
\end{itemize}

Note that this will imply the statement of Lemma \ref{main.one.lined} since there are two equivalence classes of $\chi$-augmenting cycles and if an edge belongs to an intersection of three equivalent $\chi$-augmenting cycles, then it belongs to a substantial intersection of two of them.

Assume that the statement holds for $k-1$, and let the families $\mathcal{C}^{k-1}_1, \dots ,\mathcal{C}^{k-1}_{k-1}$ be given by the induction assumption for $\frac{\varepsilon}{2}$.
We will construct the families $\mathcal{C}^k_1, \dots ,\mathcal{C}^k_k$ that work for $\varepsilon$. We choose a large $n$ and for each $j\leq n$ we construct families $\mathcal{C}_{1,j}, \dots ,\mathcal{C}_{k-1,j}$ as well as a family $\mathcal{D}_j$ such that

\begin{itemize}
    \item[(i)] $\mathcal{C}_{i,j+1}\subseteq\mathcal{C}_{i,j}$ and $\mu(\bigcup(\mathcal{C}_{i,j}\setminus \mathcal{C}_{i,j+1}))<\frac{\varepsilon}{4nk}$ for each $i<k$ and $j\leq n$,
    
\item[(ii)] each equivalence class of $\chi$-augmenting cycles in $\mathcal{D}_j$ is disjoint on $L(\chi)$ and covers at least $1-(\frac{1}{2})^j$ edges in $L(\chi)$
\item[(iii)] for each $j<n$  the family $\mathcal{D}_j\cup\bigcup_{i=1}^{k-1}\mathcal{C}_{i,j}$ does not have substantial intersections of equivalent cycles outside of $L(\chi)$. 
\end{itemize}

Choose $n$ large enough such that $(\frac{1}{2})^n<\frac{\varepsilon}{8}$ and set $\mathcal{C}^k_k=\mathcal{D}_n$ and $\mathcal{C}^k_i=\mathcal{C}_{i,n}$ for each $i<k$.

We construct the families $\mathcal{C}_{1,j},\ldots,\mathcal{C}_{k-1,j},\mathcal{D}_j$ by induction on $j$. Suppose that they are constructed for a certain $j<n$. Let $m_j$ and $l_j$ ($l_j$ depending on $m_j$) be large enough.

Using Claim \ref{coveringlines} choose a family of $\chi$-augmenting cycles $\mathcal{A}$  such that a.e. interval of length at least $m_j$ is covered by both equivalence classes of $\chi$-augmenting cycles in $\mathcal{A}$ which are disjoint outside of $L(\chi)$ from those cycles of $\mathcal{C}_{1,j},\ldots,\mathcal{C}_{k-1,j},\mathcal{D}_j$ which intersect that interval. Next, find a family $\mathcal{I}$ of finite subintervals of $L(\chi)$ each of length at least $l_j$ and covering all but a small proportion of $L(\chi)$ such that the cycles from $\mathcal{A}\cup\mathcal{C}_{1,j}\cup\ldots\cup\mathcal{C}_{k-1,j}\cup\mathcal{D}_j$ that intersect distinct intervals from $\mathcal{I}$ are disjoint. 

\bigskip

\noindent Now, until further notice, we work locally on each $I\in\mathcal{I}$. 

We can assume that equivalent cycles in $\mathcal{A}$ do not substantially intersect outside of $L(\chi)$. This can be ensured, as we can merge such cycles and still cover the same edges on the line.

Next, apply Claim \ref{local.refinement} to each cycle $A\in\mathcal{A}$ intersecting $I$ to get a cycle $A'$ obtained by merging $A$ with at most two cycles $\{C_A^1,C_A^2\}$ from $\mathcal{C}_{1,j}\cup\ldots\cup\mathcal{C}_{k-1,j}\cup\mathcal{D}_j$. Note that by our assumption such cycles $A'$ still cover (in both equivalence classes) the same intervals of length $m_j$ as the original cycles $A$. Write $\mathcal{A}'$ for the family of such cycles $A'$. Note that if two equivalent cycles from $\mathcal{A}'$ cover disjoint subintervals of $I$, then they must be obtained by merging distinct cycles to members of $\mathcal{A}$, and thus must not have substantial intersections outside of the line $L(\chi)$, by our assumption on $\mathcal{A}$ and $\mathcal{C}_{1,j}\cup\ldots\cup\mathcal{C}_{k-1,j}\cup\mathcal{D}_j$.

Now, divide $\mathcal{A}'$ into two families  --- the intersection with the two equivalence classes of $\chi$-augmenting cycles. To each of those families apply Helly's theorem to shrink it so that each edge of the line $L(\chi)$ is covered by at most two $\chi$-augmenting cycles of this shrunken family. 
Next, shrink both families again by taking every other $\chi$-augmenting cycle in each of them and write $\mathcal{A}''$ for the family of $\chi$-augmenting cycles in the union of both shrunken families. Note that we can make sure that
 \begin{itemize}
 \item[(a)] the equivalent $\chi$-augmenting cycles from $\mathcal{A}''$ cover pairwise disjoint intervals of the line, 
 \item[(b)] each equivalence class of $\chi$-augmenting cycles in $\bigcup\mathcal{A}''$ covers at least a half of $I\setminus \bigcup(\mathcal{C}_{1,j}\cup\ldots\cup\mathcal{C}_{k-1,j}\cup\mathcal{D}_j)$.
 \end{itemize}
The condition (a) follows since we have two ways of taking every other $\chi$-augmenting cycle after applying Helly's theorem. The condition (b) can be arranged if $l_j$ is chosen large enough. Note that by (a) and the property of the family $\mathcal{A}$, we get that equivalent $\chi$-augmenting cycles from $\mathcal{A}''$ do not have substantial intersections outside of $L(\chi)$. 

\bigskip

\noindent Now we leave the interval $I\in\mathcal{I}$ and work globally again. 

For each $i<k$ shrink $\mathcal{C}_{i,j}$ to  $\mathcal{C}_{i,j+1}$ and $\mathcal{D}_j$ to $\mathcal{D}_j'$ by removing $\{C_A^1,C_A^2\}$ corresponding to the $\chi$-augmenting cycles in $\mathcal{A}''$ for all $I\in\mathcal{I}$. Note that the condition (i) can be arranged if $l_j$ is chosen large enough.
 

Finally, let $\mathcal{D}_{j+1}$ consist of the $\chi$-augmenting cycles from $\mathcal{A}''$ as well as the $\chi$-augmenting cycles from $\mathcal{D}_j$ whose interval does not intersect an equivalent $\chi$-augmenting cycle from $\mathcal{A}''$. Note that the condition (ii) can be arranged if $m_j$ is large enough.

\end{proof}

\section{Improving random perfect fractional matchings}
\label{section.final}

This sction is devoted to the proof of Theorem \ref{fmatching}, which will follow the same lines as the proof of Theorem \ref{uniformly.bounded} but will need one more technical lemma.

Note that $P_f$ is closed in $L^2(E(G))$ in the $\|\cdot\|_2$ norm.
We will consider $R_f$ with the induced $L^2$ distance. 
Note that $R_f$ is also closed in $L^2(E(G))$ with respect to the $\|\cdot\|_2$ norm. 
For the rest of this section, we consider $R_f$ as a Polish metric space with this induced $L^2$ metric.

Since the weak$^*$ topology on $P_f$ is also Polish and is weaker than the norm topology on $P_f$, both topologies induce the same Borel structures by \cite[Theorem 15.1]{kechris}. 
Thus, a Borel probability measure on the extreme points of $P_f$ is also a Borel probability measure on $R_f$ with the $L^2$ metric. We will consider the space of Borel probability measures on $R_f$ with the Wasserstein distance, induced by the $L^2$ metric on $R_f$.

\begin{lemma}\label{successor2}
Let $G$ be a hyperfinite bipartite one-ended graphing and let $f:V(G)\to\mathbb{N}$ be integrable.
Suppose there exists is a measurable  perfect fractional $f$-matching  $\tau:E(G)\to[0,1]$ such that $\supp(\tau)=G$.

Let $X$ be a random measurable perfect fractional $f$-matching such that a.s.
$X \in R_f$. 
Assume that $\mathbb{E}\mu(L(X))>0$. Let $\delta>0$. Then there exists a random measurable perfect fractional matching $X'$ which is a.s. an extreme point of $P_f$ and $\lambda\in[0,1]$ 
such that 
\begin{itemize}
    \item[(i)] $\mathbb{E} \mu(L(X'))< \mathbb{E}\mu(L(X))$
    \item[(ii)] $
W(X',X)\leq 3(\mathbb{E}\mu(L(X))-\mathbb{E}\mu(L(X')))$

\item[(iii)]  $\|\mathrm{bar}(X')-(\lambda\tau+(1-\lambda)\mathrm{bar}(X))\|_2\leq 
\delta(\mu(L(X)-\mathbb{E} \mu(L(X')))$
\end{itemize}

\end{lemma}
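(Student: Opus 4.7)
My plan is to apply Propositions \ref{successor1} and \ref{successor.oneline} pointwise in $\chi \sim X$ and then average. For each $\chi \in R_f$, let $V_1(\chi) \subseteq V(G)$ denote the union of those components of $G$ on which $L(\chi)$ consists of a single bi-infinite line, and set $V_2(\chi) = V(G) \setminus V_1(\chi)$. Both sets depend measurably on $\chi$, and each induced graphing $G|V_i(\chi)$ inherits the relevant hypotheses (hyperfinite, bipartite, one-ended) of the lemma.

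For each $\chi$ in the support of $X$, I construct a random $Y_\chi$ concentrated on extreme points of $P_f$ in two pieces. On $V_2(\chi)$, where $L(\chi)$ is nowhere one-lined, Proposition \ref{successor1}(b) applied to $\chi|_{V_2(\chi)}$ yields a random extreme point $Y^2_\chi$ whose barycenter has the form $\lambda_\chi \tau + (1-\lambda_\chi)\chi$ on $V_2(\chi)$. Using a scaling trick --- replacing $Y^2_\chi$ by $\alpha Y^2_\chi + (1-\alpha) \delta_{\chi|_{V_2(\chi)}}$ for a small $\alpha > 0$ and then re-expressing the result as a convex combination of extreme points via Choquet--Bishop--de Leeuw --- I may arrange that $\lambda_\chi$ equals a fixed constant $\lambda_0$ independent of $\chi$, with both the improvement in $\mu(L)$ and the shift toward $\tau$ rescaled by $\alpha$ so their ratio is preserved. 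On $V_1(\chi)$, Proposition \ref{successor.oneline} applied to $\chi|_{V_1(\chi)}$ with a small auxiliary parameter $\delta'$ gives a random $Y^1_\chi \in R_f$ whose barycenter lies within $\delta'$ times the local improvement of $\chi|_{V_1(\chi)}$ in $L^2$; a further Choquet decomposition puts $Y^1_\chi$ on extreme points. Gluing across the partition yields the random extreme point $Y_\chi$. Finally, I define $X'$ to be the mixture obtained by sampling $\chi \sim X$ and then $X' \sim Y_\chi$.

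Conditions (i) and (ii) follow routinely: Lemma \ref{weird.estimate} applied pointwise to each $Y_\chi$ (with constant $3$) averages over $\chi \sim X$, using the natural coupling between $X$ and $X'$, to give (i) and (ii) for the mixture. For (iii), I set $\lambda = \lambda_0$; a direct computation, using that $\mathrm{bar}(Y_\chi) \approx \chi + \lambda_0(\tau - \chi)\cdot 1_{V_2(\chi)}$ up to the \ref{successor.oneline} error on $V_1(\chi)$, gives
\[\mathrm{bar}(X') - \lambda_0 \tau - (1-\lambda_0)\mathrm{bar}(X) = \mathbb{E}_\chi\bigl[(\mathrm{bar}(Y^1_\chi) - \chi)\cdot 1_{V_1(\chi)}\bigr] + \lambda_0\, \mathbb{E}_\chi\bigl[(\chi - \tau) \cdot 1_{V_1(\chi)}\bigr].\]
The first summand is the Proposition \ref{successor.oneline} approximation error, hence has $L^2$-norm at most $\delta'(\mathbb{E}\mu(L(X)) - \mathbb{E}\mu(L(X')))$. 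The second, a \emph{mixing} error, has $L^2$-norm bounded by $\lambda_0 \|\tau - \chi\|_2 \leq \lambda_0 \sqrt{2\|f\|_1}$ since every element of $P_f$ has $L^2$-norm controlled by $\|f\|_1$.

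The principal obstacle is reconciling the $\chi$-dependent shift toward $\tau$ (occurring only on $V_2(\chi)$) with a single global $\lambda$. If $X$ is almost surely supported on $\chi$ with $\mu(V_1(\chi)) = 0$, the mixing error vanishes and $\lambda = \lambda_0$ matches exactly. Otherwise $X$ assigns positive mass to $\{\chi : \mu(V_1(\chi)) > 0\}$, which --- by Proposition \ref{successor.oneline} and the construction on $V_1$ --- provides a lower bound on the $V_1$-contribution to the total improvement that is \emph{independent} of $\lambda_0$. Choosing $\lambda_0$ sufficiently small then dominates the mixing error by $\delta$ times the total improvement, and choosing $\delta'$ small handles the first error term, verifying (iii).
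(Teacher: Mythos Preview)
Your approach differs substantially from the paper's and has real gaps.

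The paper does not apply Propositions~\ref{successor1} and~\ref{successor.oneline} pointwise in $\chi$. Instead it forms the product graphing $G^*$ with vertex set $R_f\times V(G)$, equips it with the product of $X$ and $\nu$, and observes that the random $X$ then becomes a single deterministic $\chi^*\in R_{f^*}$ on $G^*$. One application of Proposition~\ref{successor1} (if $L(\chi^*)$ is nowhere one-lined) or Proposition~\ref{successor.oneline} (otherwise) to this single $\chi^*$ produces a random $X^*$ on $G^*$, which is then disintegrated back over $R_f$ to give $X'$. Conditions (i)--(iii) on $G$ fall out of the corresponding conditions on $G^*$ via Fubini and Cauchy--Schwarz. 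This device completely avoids both of the difficulties your argument runs into.

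Your pointwise plan has two concrete problems. First, measurability: for the mixture $X'=\int Y_\chi\,dX(\chi)$ to define a Borel probability measure on $P_f$, the assignment $\chi\mapsto Y_\chi$ must be a Borel Markov kernel. Propositions~\ref{successor1} and~\ref{successor.oneline} are stated for a fixed $\chi$; their proofs involve choosing families of cycles, parameters $\theta,k,\varepsilon$, and Choquet representations that are not shown to depend measurably on $\chi$. You never address this. Second, the normalisation of $\lambda_\chi$ to a common $\lambda_0$ via your scaling trick requires $\alpha=\lambda_0/\lambda_\chi\le 1$, i.e.\ $\lambda_0\le\inf_\chi\lambda_\chi$. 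But in the proof of Proposition~\ref{successor1} the value $\lambda=2\varepsilon/\theta$ depends on the $\theta$ produced by Lemma~\ref{covering.twolined}, which in turn depends on $L(\chi)$; there is no uniform positive lower bound on $\lambda_\chi$ as $\chi$ ranges over $R_f$. Without a common $\lambda_0$, the barycenter computation for (iii) fails: $\mathbb{E}_\chi[(1-\lambda_\chi)\chi]\ne(1-\mathbb{E}\lambda_\chi)\mathbb{E}_\chi[\chi]$ in general, so the error picks up a covariance term $\mathbb{E}_\chi[(\lambda-\lambda_\chi)\chi]$ that you have no way to control by $\delta$ times the improvement. The product-graphing trick is precisely what makes these issues disappear, since after passing to $G^*$ there is only one $\chi^*$ and hence only one $\lambda$.
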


\begin{proof}
We will replace the random perfect fractional matching $X$ with a perfect fractional matching on a larger graphing. Consider the graphing $G^*$ with the vertex set $R_f\times V(G)$, where two vertices are adjacent if their first coordinates are equal and the second coordinates are adjacent in $G$. Note that $f$ and $\tau$ naturally extend to $f^*$ and $\tau^*$ on $V(G^*)$ and $E(G^*)$, by ignoring the first coordinates. Note that random perfect fractional $f$-matchings correspond on $G$ correspond to perfect fractional $f^*$-matchings on $G^*$. Write $\chi^*$ for the perfect fractional $f^*$-matching on $G^*$ corresponding to $X$. The measure $\mu$ on $E(G)$ and the measure $X$ on $R_f$ induce a measure on $E(G^*)$, which we denote by $\mu^*$. Note that $\mu^*(L(\chi^*))=\mathbb{E}(L(X))>0$ by our assumption.






 Now we apply Proposition \ref{successor1}, in case $L(\chi^*)$ is nowhere one-lined, and Proposition \ref{successor.oneline} in case $L(\chi^*)$ is one-lined in a set of components of positive measure, to obtain $\lambda\in[0,1]$ and a random perfect fractional $f^*$-matching $X^*$ on $G^*$ such that
\begin{itemize}
    \item[(i*)] $\mathbb{E}\mu^*(L(X^*))<\mu^*(L(\chi^*))$

    \item[(ii*)] $
      \mathbb{E}\mu^*(\{e\in E(G^*):X^*(e)\not=\chi^*(e)\})<3(\mu^*(L(\chi^*))-\mathbb{E}\mu^*(L(X^*)))$.

\item[(iii*)]  $\|\mathrm{bar}(X^*)-(\lambda\tau^*+(1-\lambda)\chi^*)\|_2\leq
\delta(\mu^*(L(\chi^*)-\mathbb{E} \mu^*(L(X^*)))$
\end{itemize}

Also, we set $X'$ to be the random perfect fractional $f$-matching on $G$ corresponding to $X^*$. This means that $X'$ is obtained by first choosing a random perfect fractional matching on $G^*$, distributed according to $X^*$, followed by randomly choosing $\chi$, distributed according to $X$ and then restricting the perfect fractional matching on $G^*$ to the $\chi$-th section of the graphing $G^*$. 


The condition (i) 
follows from (i*) and the fact that 
\begin{equation}\label{measures}
\mathbb{E}\mu(L(X'))=\mathbb{E}\mu^*(L(X^*)),\quad\mathbb{E}\mu(L(X))=\mu^*(L(\chi^*)).
\end{equation}

Note that the barycenter of $X'$ is obtained by taking the expectation of $\mathrm{bar}(X^*)$ over $\chi$. 
Note also that taking the expectation of $\chi^*$ over $\chi$ gives us the barycenter of $X$, 
and the expectation of $\tau^*$ over $\chi$ 
is equal to $\tau$. Thus (iii) follows from (iii*) and the Cauchy--Schwartz inequality: 

$$\int\limits_{E(G)}\big|\mathbb{E}_{\chi}\mathrm{bar}(X^*)-(\lambda\mathbb{E}_\chi\tau^*+(1-\lambda)\mathbb{E}_\chi\chi^*)\big|^2\leq\int\limits_{E(G)}\mathbb{E}_\chi [\mathrm{bar}(X^*)-(\lambda\tau^*+(1-\lambda)\chi^*)]^2.$$

For each $\chi\in R_f$ we write $X^*_\chi$ for the corresponding random perfect fractional $f$-matching on $G$. More precisely, for each $\chi\in R_f$ and a perfect fractional matching $\pi^*$ on $G^*$ we get the perfect fractional matching $\pi^*_\chi$ obtained by restricting $\pi^*$ to the $\chi$-th section of the graphing $G^*$. Then the measure $X^*_\chi$ is obtained as the pushforward of $X^*$ from $R_{f^*}$ to $R_f$ via the map $\pi^*\mapsto\pi^*_\chi$. Note that $X'$ can be represented by first randomly choosing $\chi$, distributed according to $X$, and then randomly choosing the perfect fractional matching, distributed according to $X^*_\chi$.

In order to obtain (ii) and bound $W(X,X')$, consider the measure $K$ on $R_f\times R_f$ which for $A\subseteq R_f\times R_f$ is defined as $K(A)=\int_{\chi\in R_f} X^*_\chi(A_\chi)\ dX(\chi)$ (here we write $A_\chi$ for the $\chi$-th vertical section of $A$).

Note that $K$ is a coupling of $X$ and $X'$. Now, 
\begin{align*}
   \int_{R_f\times R_f} \|\chi-\chi'\|_2 dK(\chi,\chi')= 
\int_{R_f} \mathbb{E}\|\chi-X^*_\chi\|_2 d X(\chi)\\
\leq\mathbb{E}\int_{R_f}\mu(\{e\in E(G):\chi(e)\not=X^*_\chi(e)\})dX(\chi)\\=\mathbb{E}\mu^*(\{e\in E(G^*):\chi^*(e)\not=X^*(e)\}),
\end{align*}
which, by (ii*) and (\ref{measures}) gives the estimate on $W(X,X')$.
\end{proof}

Now we are ready to prove our main Theorem \ref{fmatching}.

\begin{proof}[Proof of Theorem \ref{fmatching}]
By Remark \ref{constant1}, we can assume that $c$ is the constant function $1$ and $G=\supp{\tau}$ is nowhere two-ended..

By \cite[Lemma 3.21(i)]{jkl} on the union of infinite zero-ended components we have a measurable selector, so this union is a nullset. Thus, we can assume all zero-ended components are finite. For finite bipartite graphs the existence of a fractional perfect $f$-matching is equivalent to the existence of a perfect $f$-matching, so we can assume that $G$ is nowhere zero-ended, hence a.e. one-ended.


We will show that $\tau$ can be approximated by convex combinations of measurable perfect $f$-matchings in $\|\cdot\|_2$. This will be enough since a sequence convergent in $L^2(E(G))$ contains a subsequence convergent a.e. 

Denote by $M_f$ the set of all measurable perfect $f$-matchings on $G$. We consider $M_f$ with the $L^2$-metric. Note that $M_f\subseteq R_f$ is closed with respect to the $\|\cdot\|_2$ norm so it is a Polish space as well (and we are about to show that it is nonempty).

We will show that for every $\delta>0$ there exists a random perfect $f$-matching $X$ (i.e., a random perfect fractional $f$-matching $X$ concentrated on $M_f$) such that $\|\mathrm{bar}(X)-\tau\|_2<\delta$. To see that this is enough, note that $\mathrm{bar}(X)$ can be approximated in $\|\cdot\|_2$ by convex combinations of elements of $M_f$. Indeed, convex combinations of Dirac measures on $M_f$ are dense in the weak topology on $\mathcal{P}(M_f)$, so there is a sequence of such convex combinations converging to $X$ in $\mathcal{P}(M_f)$. This implies that the sequence converges to $X$ also in the weak topology of $\mathcal{P}(P_f)$. Then $\mathrm{bar}(X)$ is a limit of convex combinations of elements of $M_f$ in $P_f$. By Mazur's lemma there is a sequence of convex combinations of elements of $M_f$ converging to $\mathrm{bar}(X)$ in $\|\cdot\|_2$. 


Fix $\delta>0$. By the Choquet–-Bishop–-de Leeuw theorem, $\tau$ is the barycenter of a measure $X_0$ on the set $\mathrm{ext}(P_f)$ of extreme points of $P_f$. Starting with $X_0$, by induction, we define a sequence of random measurable perfect fractional $f$-matchings $X_\alpha$ concentrated on $ R_f$ for countable ordinals $\alpha$, so that

\begin{itemize}
    \item[(i)] $\mathbb{E}\mu(L(X_{\alpha+1}))<\mathbb{E}\mu(L(X_\alpha))$,
    \item[(ii)] $W(X_{\alpha+1},X_\alpha)<3(\mathbb{E}\mu(L(X_\alpha))-\mathbb{E}\mu(L(X_{\alpha+1})))$,
    \item[(iii)] $X_\gamma=\lim_{\alpha<\gamma} X_\gamma$ for any limit ordinal $\gamma<\omega_1$.
    \item[(iv)] $\|\mathrm{bar}(X_{n+1})-\tau\|_2\leq\|\mathrm{bar}(X_n)-\tau\|_2+\delta(\mathbb{E}\mu(L(X_n)-\mathbb{E} \mu(L(X_{n+1})))$
\end{itemize}

At every step $\alpha$ we will have 
\begin{itemize}
        \item[(v)] $\|\mathrm{bar}(X_\alpha)-\tau\|_2\leq\delta(\mathbb{E}\mu(L(X_0)-\mathbb{E} \mu(L(X_\alpha)))$
\end{itemize}



At successor stages, given $X_\alpha$, we use Lemma \ref{successor2} to find $X_{\alpha+1}$.



For limit ordinals $\gamma$ the condition (i) implies that the sequence $\mathbb{E}\mu(L(X_\alpha)$ for $\alpha<\gamma$ is strictly decreasing, and (ii) implies that $X_\alpha$ is Cauchy w.r.t. the Wasserstein distance. Thus, we can set $X_\gamma$ to be the weak limit of this sequence.
In particular, we have $X_\gamma=\lim_{\alpha<\gamma} X_\gamma$ in $\mathcal{P}(P_f)$, so $\mathrm{bar}(X_\gamma)=\lim_{\alpha<\gamma}\mathrm{bar}(X_\alpha)$ in $P_f$.
Note that we have $\mathbb{E}\mu(L(X_\gamma))=\lim_{\alpha<\gamma}\mathbb{E}\mu(L(X_\alpha))$ since $\chi\mapsto\mu(L(\chi))$ is continuous on $R_f$ with the $L^2$ distance. Finally, as $\|\cdot\|_2$ balls are closed in the weak$^*$ topology, (v) follows.



Note that there must exist $\alpha<\omega_1$, such that have $\mathbb{E}\mu(L(X_\alpha))=0$. This implies that the random perfect fractional $f$-matching $X_\alpha$ is a.s. a measurable perfect $f$-matching a.e. Finally, the condition (v) implies that $\|\mathrm{bar}(X_\alpha)-\tau\|_2\leq \delta\mathbb{E}\mu(L(X_0))\leq\delta$, which ends the proof.

\end{proof}

\begin{remark}\label{barycenter.regular}
Under the assumptions of Theorem \ref{uniformly.bounded} (so in particular for regular graphings), the proof of Theorem \ref{fmatching} above shows that $\tau$ can be made equal to a barycenter of a probability measure concentrated on measurable fractional perfect $f$-matchings. This is because under the assumptions of Theorem \ref{uniformly.bounded}, in Lemma \ref{successor2} above in place of (iii) we get that $\mathrm{bar}(X')$ is equal to a convex combination of $\mathrm{bar}(X)$ and $\tau$, since we can use Proposition \ref{successor1} only and do not need to refer to Proposition \ref{successor.oneline}. 
\end{remark}

\section{Factor of iid perfect matchings}
\label{sec:factor-iid-perfect}

In this section we prove Theorem~\ref{fiid}. The following will be the key lemma in the two-ended case.

\begin{lemma}\label{HxZ}
Consider a group $\Gamma=\mathbb{Z}\ltimes\Delta$, where $\Delta$ is a finite group, with a finite symmetric generating set for $\Gamma$ such that the corresponding Cayley graph is bipartite.
\begin{itemize}
    \item[(i)] If $\Delta$ has odd order and $\Gamma\curvearrowright (V,\nu)$ is an a.e. free and totally ergodic action on a standard probability space, then its Schreier graphing does not admit a measurable perfect matching,
    \item[(ii)] If $\Delta$ has even order and $\Gamma\curvearrowright V$ is a free Borel action, then its Schreier graphing admits a Borel perfect matching.
\end{itemize}

\end{lemma}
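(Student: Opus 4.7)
The plan is to handle parts (i) and (ii) separately: (i) by a parity and total-ergodicity argument, and (ii) by an explicit Borel construction using an involution from $\Delta$.

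For (i), observe first that because $|\Delta|$ is odd, every homomorphism $\Delta\to\mathbb{Z}/2$ is trivial, so the bipartition homomorphism $\phi:\Gamma\to\mathbb{Z}/2$ (the one taking value $1$ on each generator) restricts trivially to $\Delta$, and hence $\phi(n,d)=n\bmod 2$. Consequently every generator $(n_i,d_i)\in S$ has $n_i$ odd. Assume towards a contradiction that $M$ is a measurable perfect matching, and for each $v\in V$ write $n_v$ for the $\mathbb{Z}$-component of the generator used at $v$, so that $n_{M(v)}=-n_v$ and each $n_v$ is a nonzero odd integer. Project to the quotient $\bar V=V/\Delta$, on which $s=(1,e)$ acts freely and in a pmp way; by total ergodicity $s^2$ acts ergodically on $\bar V$. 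For each $\Delta$-orbit $O\in\bar V$ let $C(O)$ denote the number of $M$-edges whose source lies at position $\leq O$ and whose target lies at position $>O$ within the same $\Gamma$-orbit; boundedness of $|n_v|$ makes $C$ a bounded measurable integer-valued function. A direct accounting of edges gained and lost when shifting the cut from $O$ to $sO$ yields
\[
C(sO)-C(O)=|(sO)^+|-|(sO)^-|,
\]
where $(sO)^{\pm}$ are the vertices of the $\Delta$-orbit $sO$ with $n_v$ positive, respectively negative. Since $|(sO)^+|+|(sO)^-|=|\Delta|$ is odd, the right-hand side is odd for every $O$, so $\bar C:=C\bmod 2$ satisfies $\bar C\circ s=1-\bar C$. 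Thus $\bar C$ is $s^2$-invariant but non-constant, contradicting ergodicity of $s^2$.

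For (ii), by Cauchy's theorem $\Delta$ contains an element $\iota$ of order $2$. In the clean case where $(0,\iota)\in S$, the map $v\mapsto v\cdot(0,\iota)$ is a free Borel involution whose graph is contained in the Schreier graphing and immediately gives a Borel perfect matching. In the general case, the Schreier graphing is two-ended (because $\Gamma$ is virtually cyclic), so a Borel version of Proposition \ref{equi}, combined with the standard Slaman--Steel marker lemma applied to the free Borel $\mathbb{Z}$-subaction, partitions each orbit into a Borel sequence of finite \emph{blocks} of bounded width along the $\mathbb{Z}$-direction, each spanning a consecutive run of complete $\Delta$-cosets. The number of vertices in each block is then a multiple of $|\Delta|$ and in particular even. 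For blocks wide enough relative to $\max_i|n_i|$, the induced finite bipartite subgraph of the Cayley graph of $\Gamma$ has balanced color classes and satisfies Hall's condition, and hence admits a perfect matching; selecting the lex-least such matching in each block in a Borel fashion yields the desired Borel perfect matching of the graphing.

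The main obstacle is the block-matching step in (ii): one must take the blocks wide enough that Hall's condition genuinely holds despite boundary effects, and ensure that the Borel choices on neighboring blocks do not leave boundary vertices unmatched. Vertex-transitivity of the Cayley graph of $\Gamma$ makes the interior of a sufficiently wide block behave like the full (regular and balanced bipartite) Cayley graph, which is what controls the boundary deficits. A subsidiary technical point in (i) is the edge-accounting identity $C(sO)-C(O)=|(sO)^+|-|(sO)^-|$: when some $|n_i|>1$, a single matched edge may cross several consecutive cuts, and the identity has to be verified by noting that contributions of edges straddling both the cut at $O$ and the cut at $sO$ cancel in the difference, leaving only contributions of edges incident to the $\Delta$-orbit $sO$.
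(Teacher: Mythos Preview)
Your argument for (i) is correct and is essentially the paper's proof in different clothing: both reduce to the folklore fact that a totally ergodic free $\mathbb{Z}$-action admits no measurable perfect matching, by passing to the quotient $V/\Delta$ and using that $|\Delta|$ odd forces the parity of the cut function $C$ to flip under the shift. The edge-accounting identity you flag is indeed correct and your justification is fine.

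Part (ii), however, has a genuine gap exactly where you anticipate it. You assert that for sufficiently wide blocks the induced bipartite subgraph has balanced colour classes and satisfies Hall's condition, invoking only that ``vertex-transitivity makes the interior behave like the full regular bipartite Cayley graph.'' But regularity of the ambient Cayley graph only gives $|N_G(H)|\ge|H|$ for finite $H$; since this can be tight, removing the boundary edges (of which there are up to some fixed $b$) may well produce a Hall violation. What is actually needed is $|N_G(H)|\ge|H|+b$ for every potential violator $H$, and this does not follow from vertex-transitivity alone. The paper's proof supplies precisely this slack via a non-trivial pigeonhole argument: one locates $b$ disjoint windows $J_1,\dots,J_b$ along the block in which the pattern $H\cap(\text{window})$ repeats, observes that each such window corresponds to a finite connected regular bipartite Cayley graph $\Gamma/\langle\gamma^{n_{i+1}-n_i}\rangle$ (where Hall is \emph{strict} for proper nonempty subsets), and hence gains at least one extra neighbour per window. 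One also has to rule out that $H$ is empty or full in some $J_i$, which the paper does using an auxiliary block $\Phi$ constructed so that both $\Phi$ and its complement admit perfect matchings (Claim~\ref{blocks}); your block-only scheme would need a replacement for this step as well.

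Two smaller points: the ``clean case'' $(0,\iota)\in\Sigma$ is a nice observation but almost never available, so it does not reduce the work; and you need to arrange that each block contains an even number of $\Delta$-cosets in the case where every $\Delta$-coset is monochromatic (Claim~\ref{half}), otherwise the colour classes are not balanced---this is easy to fix but should be said.
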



\begin{proof}
Since $\Gamma\slash\Delta=\mathbb{Z}$, there exists an element $\gamma\in\Gamma$ such that $\gamma\Delta$ generates this quotient. Note that $\mathbb{Z}$ acts on $\Gamma$ and the Cayley graph of $\Gamma$ is a disjoint union of copies of $\Delta$ arranged in a bi-infinite line $\bigcup_{i=-\infty}^\infty\gamma^i\Delta$. Also, since the cyclic group generated by $\gamma$ is of finite index, it has a further finite index subgroup $Z$, generated by $\gamma^n$ for some $n>0$, such that $Z$ is normal in $\Gamma$. 

\textbf{(i)}. First, consider the case when $|\Delta|$ is odd. 
Recall the following folklore fact. 
\begin{claim}\label{folklore}
  If $G$ is the Schreier graphing of a  free totally ergodic action of $\mathbb{Z}$ with respect to $\pm1$ as generating set, then $G$ does not admit a measurable perfect matching. 
\end{claim}
\begin{proof} 
Suppose $G$ admits a measurable perfect matching. The set of vertices $A=\{x\in V(G): x \text{ is matched to } x+1\}$
is invariant under $2\mathbb{Z}$. Thus, it is either null or co-null by total ergodicity. However $V(G)\setminus A$ is obtained by shifting $A$ by $1$.
\end{proof}
Now, let $\Gamma\curvearrowright (V,\nu)$ be a.e. free and totally ergodic. Assume $M$ is a measurable perfect matching. Note that on each orbit (which we identify with the Cayley graph of $\Gamma$),  for every $i$ exactly one of the two sets of edges 
$E(\bigcup_{j=-\infty}^i \gamma^{i} \Delta, \bigcup_{j=i+1}^{\infty} \gamma^j \Delta) \cap M$ or $E(\bigcup_{j=-\infty}^{i-1} \gamma^{i} \Delta, \bigcup_{j=i}^{\infty} \gamma^j \Delta) \cap M$ has odd size. Write $(V',\nu')$ for the quotient of $V$ by the action of the group $\Delta$ and note that $(V',\nu')$ is also a standard probability space. Note that the action $\Gamma\curvearrowright (V,\nu)$ induces the action $\mathbb{Z}\curvearrowright (V',\nu')$ and we consider the Schreier graphing $G'$ of the action $\mathbb{Z}\curvearrowright (V',\nu')$ with respect to the generators $\pm1$. Consider the set $M'$ of edges of $G'$ that on every orbit contains $(i,i+1)$ if $E(\bigcup_{j=-\infty}^i \gamma^{i} \Delta, \bigcup_{j=i+1}^{\infty} \gamma^j \Delta) \cap M$ is odd. Note that $M'$ is a measurable perfect matching in $G'$. The action $\mathbb{Z}\curvearrowright (V',\nu')$ is totally ergodic since $\Gamma\curvearrowright (V,\nu)$ is totally ergodic. This gives a contradiction by Claim \ref{folklore}.

 
 \medskip

\textbf{(ii)}.
Now assume that $|\Delta|$ is even. We consider a bipartite Schreier graph $G$ of a Borel action $\Gamma\curvearrowright V$, induced by a set of generators $\Sigma\subseteq\Gamma$. 


\begin{claim}\label{half}
 In the Cayley graph of $\Gamma$ either $\Delta$ is entirely contained in one of the class of the bipartition, or it contains an equal number of vertices from both classes.
\end{claim}
\begin{proof}
 Note that the set of elements of $\Delta$ which have even word length in the Cayley graph forms a subgroup $\Delta'$ of $\Delta$. Elements of $\Delta'$ lie in the same class of the bipartition as the neutral element, while the elements of $\Delta\setminus\Delta'$ lie in the other class. As $\Delta'$ is a subgroup of index at most $2$, either it is equal to $\Delta$ or constitutes half of its elements.
\end{proof}

Write $V'$ for the quotient of $V$ by the action of the group $\Delta$ and note that $V'$ is also a standard Borel space. Recall that $\gamma\in\Sigma$ is a generator of $\Gamma$ such that $\Gamma=\bigcup_{i=-\infty}^\infty\gamma^i\Delta$. 
Consider 
the graphing $G'$ on $V'$ induced by the generator $\gamma$. 
Let $m=\max \{i: \exists \sigma \in \Sigma, 
\sigma\Delta=\gamma^i\Delta \}$. 
Note then that for every generator  $
\sigma \in \Sigma\setminus\Delta$ we have $\sigma\Delta=\gamma^l\Delta$ for some $0<l\leq m$. 
\begin{claim}\label{blocks}
There exists a finite subset $\Phi\subseteq \Gamma$ containing the identity 
such that 
\begin{itemize}
    \item the induced Cayley graph on $\Phi$ admits a perfect matching, 
    \item the induced Cayley graph on the complement of $\Phi$ in $\Gamma$ admits a perfect matching,
    \item the complement of $\Phi$ in $\Gamma$ consists of two infinite connected components.
    \item if $k$ is such that $\Phi\cap \gamma^k\Phi=
    \emptyset$, then the complement of $\Phi\cup \gamma^k\Phi$ consists of two infinite and one finite component, and the latter has half of its vertices in each class of the bipartition.
\end{itemize}


\end{claim}

\begin{proof}
By Claim \ref{half}, we consider two cases.

\textbf{Case 1.} Suppose that each copy of $\Delta$ in $G$ contains an equal number of vertices from each class of the bipartition. Then we choose $\Phi$ to contain 
$2lm$ copies of $\Delta$ starting from the identity.  We divide $\Phi$ into $m$ consecutive unions of $2l$ consecutive copies  of $\Delta$. The perfect matching on each such consecutive union of copies of $\Delta$ is induced by $\sigma$. Note that it can be extended to the left and to the right also using $\sigma$ in the same way.  

\begin{figure}[ht]
    \centering
    \includegraphics[width=0.55\textwidth]{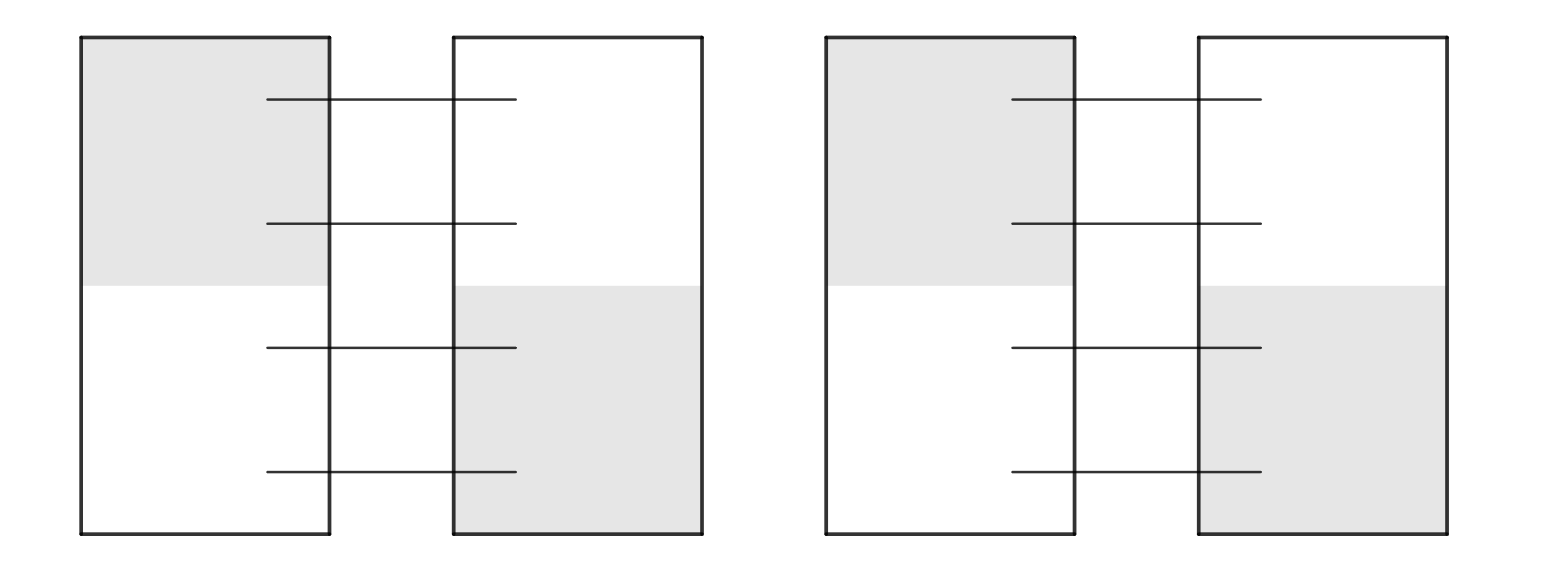}
    \caption{Perfect matching on $\Phi$ in Case 1 when $l=1$, $m=2$}
    \label{fig:6.4.1}
\end{figure}

If $k$ is such that $\Phi\cap \gamma^k\Phi=\emptyset$, then the finite component of the complement of $\Phi\cap \gamma^k\Phi=\emptyset$ is a union of copies of $\Delta$, so contains an equal number of elements of each class of the bipartition.


\textbf{Case 2.} Suppose that each copy of $\Delta$ is contained in one class of the bipartition. 
Then we choose $\Phi$ to contain 
$2lm$ consecutive copies of $\Delta$, starting at the identity, together with $l$ halves of copies of $\Delta$ preceding them as well as $l$ halves of copies of $\Delta$ following them. 
The perfect matching on $\Phi$ is induced by $\sigma$ so that on every copy of $\Delta$ intersecting the block
half of the vertices is matched in the $\sigma$ direction and half in the $\sigma^{-1}$ direction (in the first $l$ halves we only use the $\sigma$ direction and in the last $l$ halves we only use the $\sigma^{-1}$ direction). Note that this matching can be extended to the left and to the right to a perfect matching in the same way.  

\begin{figure}[ht]
    \centering
    \includegraphics[width=0.75\textwidth]{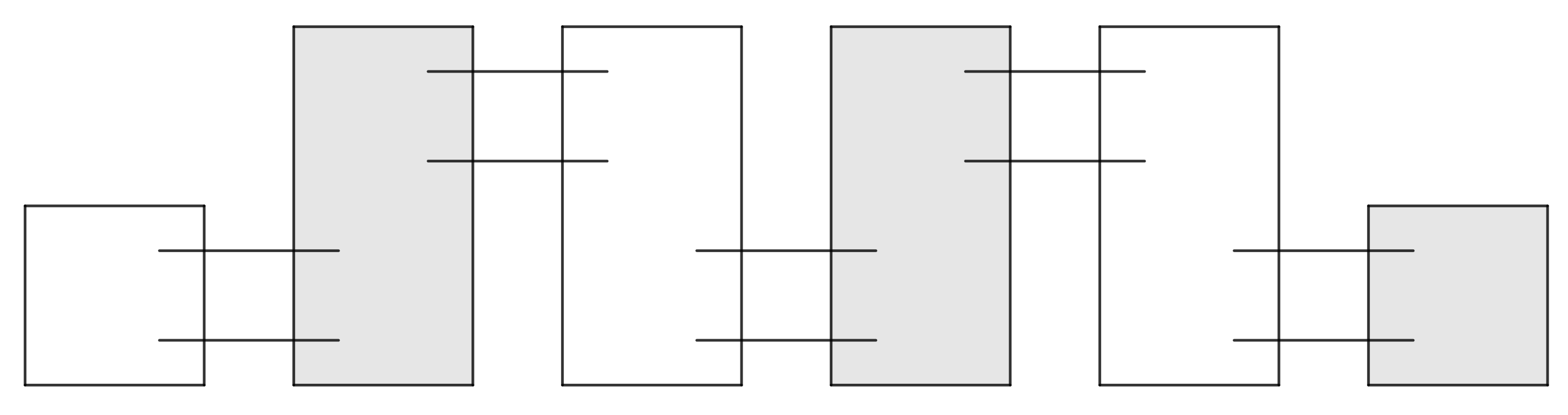}
    \caption{Perfect matching on $\Phi$ in Case 2 when $l=1$, $m=2$}
    \label{fig:6.4.1}
\end{figure}

Suppose that $k$ is such that $\Phi\cap \gamma^k\Phi=\emptyset$.
To see that the finite component of the complement of $\Phi\cup \gamma^k\Phi=\emptyset$
contains an equal number of elements of both parts of the bipartition, note that it is the union of fragments of $\sigma$-orbits on the copies of $\Delta$, and on each such fragment the classes of the bipartition alternate between the copies of $\Delta$. In each such fragment, the vertices in the halves of the copies of $\Delta$ on the boundary of the component of $\Phi\cup\gamma^k\Phi$ make up the difference in the number of vertices in each class of the bipartition.

\end{proof}

Let $\Phi$ be such as in Claim \ref{blocks} and $k_0$ be such that whenever $k>k_0$, then $\Phi\cap\gamma^k\Phi=\emptyset$. For $k>k_0$ sufficiently large, to be determined later,  construct a maximal Borel 
$k$-discrete set $T'$ in the graphing $G'$ (i.e., any of its two vertices are of distance at least $k$ and $T'$ is maximal with this property, see \cite[Lemma 7.3]{kechris.miller}). 
Consider the corresponding collection $T$ of representatives of copies of $\Delta$ on orbits in $V$ and let $B=\Phi\cdot T$.

Note that on each component of $G$, the induced Cayley graph on $B$ has finite components, such that each such finite component has a perfect matching, and its complement also has a perfect matching. Also, the induced Cayley graph on the complement of $B$ has finite components which contain an equal number of elements from each class of the bipartition. 
We will refer to the finite components of $B$ as to \textit{blocks} and to the finite components of the complemement of $B$ as to \textit{gaps}. We clearly have a Borel perfect matching on the union of blocks.
If we can extend this perfect matching in a Borel way to the gaps, then we will be done.



Given a gap, we use the convention to enumerate full copies of $\Delta$ contained in the gap (say from left to right) as $\gamma^1\Delta,\gamma^2\Delta,\ldots,\gamma^t\Delta$ (for some $t$). First, we need the following claim. 

Recall that $n$ was chosen so that $\gamma^n$ generates a normal subgroup of $\Gamma$.

\begin{claim}
 Fix a gap $C$ and let $H\subseteq C$ be contained in one class of the bipartition. If $n_1<n_2$ with 
$n_2-n_1>m$ and $n\,|\,n_2-n_1$ are
such that $$\gamma^{n_2-n_1}(H \cap \bigcup_{j=n_1}^{n_1+2m} \gamma^j \Delta) = H \cap \bigcup_{j=n_2}^{n_2+2m} \gamma^j \Delta$$ and $H\cap \bigcup_{j=n_1+m}^{n_2+m-1} \gamma^j \Delta$ is a proper nonempty subset of $\bigcup_{j=n_1+m}^{n_2+m-1} \gamma^j \Delta$, then $$|N_G(H)\cap \bigcup_{j=n_1+m}^{n_2+m-1} \gamma^j \Delta|>|H\cap \bigcup_{j=n_1+m}^{n_2+m-1} \gamma^j \Delta |.$$
\end{claim}
\begin{proof}
Write $J=\bigcup_{j=n_1+m}^{n_2+m-1} \gamma^j \Delta$. Since $n\,|\,n_2-n_1$, we get that $\langle \gamma^{n_2-n_1} \rangle\lhd\Gamma$.
Consider the Cayley graph $F$ of $\Gamma/ \langle \gamma^{n_2-n_1} \rangle$ and note that $F$ is a regular finite bipartite graph, so it admits a perfect matching. This implies that $|N_G(H \cap J)| \geq |H \cap J|$ holds.

However, if $|N_G(H \cap J)| = |H\cap J|$ then the Hall condition
would be sharp for a proper nonempty subset in $F$, and this contradicts the connectivity of  $F$.

So we get that that  $|N_G(H \cap J)| > |H\cap J|$.
\end{proof}
Now we show how to extend the perfect matching to the gaps.

\begin{claim}\label{gaps}
  If $k$ is large enough, then each gap admits a perfect matching.
\end{claim}

\begin{proof}

 Let $C$ be a gap. In order to show that $C$ admits a perfect matching, we have to check Hall's condition for $C$. Note that since $G$ satisfies Hall's condition, the only way that $C$ may fail is from the loss of edges leaving $C$. There is an upper bound $b=|\partial\Phi|$  on the number of such edges that is independent of the size of $C$.  We will show that as long as $k$ is large enough this loss is overcome by extra edges between the inner copies of $\Delta$ in $C$. Consider a subset $H\subseteq C$ contained in one class of the bipartition, and suppose for a contradiction that $|N_C(H)|<|H|$.

If $k$ is large enough, then we choose $n_1<n_2<\ldots<n_b$ such that
$m<n_{i+1}-n_i$ and $n\,|\,n_{i+1}-n_i$ and
 $$\gamma^{n_{i+1}-n_i}(H \cap \bigcup_{j=n_i}^{n_i+2m} \gamma^j \Delta) = H \cap \bigcup_{j=n_{i+1}}^{n_{i+1}+2m} \gamma^j \Delta.$$
 Write $J_i=\bigcup_{j=n_i+m}^{n_{i+1}+m-1} \gamma^j \Delta$.
 
 Note that $H$ intersects each $J_i$, because otherwise $H$ would be
partitioned into two sets far from each other such that one of them does not satisfy the Hall condition, contradicting the fact
that the set of all vertices to the left or right of a given block can be extended to a perfect matching. Similarly, $H$ can not contain its entire class in $J$,
as we could apply the same argument to the complement of $N_C(H)$ in the other class, using the fact that $C$ has the same number of elements from both classes of the bipartition.

This implies that $|N_G(H \cap J_i)|>|H\cap J_i|$ for each $i=1,\ldots,b$, making up for the loss of edges leaving $C$, which means that $|N_G(H)|>|H|$, as needed.

\end{proof}
Choosing $k$ big enough as in Claim \ref{gaps}, we can construct a Borel perfect matching in $G$, which ends the proof.


\end{proof}

In the amenable case of Theorem \ref{fiid}, we will use the following proposition.

\begin{prop}\label{new}
Let $\Gamma$ be a finitely generated amenable group and let $G$ be a bipartite Schreier graphing of an a.e. free 
action of $\Gamma$ 
\begin{itemize}
    \item If $\Gamma$ is isomorphic to $\mathbb{Z} \ltimes \Delta$ for a finite normal subgroup $\Delta$ of odd order and the action is totally ergodic, then $G$ does not admit a measurable perfect matching.

    \item If $\Gamma$ is not isomorphic to $\mathbb{Z} \ltimes \Delta$ 
    for a finite normal subgroup $\Delta$ of odd order, then $G$ admits a measurable perfect matching.
\end{itemize}
\end{prop}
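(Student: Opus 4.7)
The plan is to handle the two bullets of the proposition separately.

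The first bullet is an immediate instance of Lemma \ref{HxZ}(i): the hypotheses $\Gamma=\mathbb{Z}\ltimes\Delta$ with $|\Delta|$ odd together with a free, totally ergodic action match exactly, so the conclusion follows.

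For the second bullet, I argue by cases on the number of ends of the Schreier graphing $G$. When $\Gamma$ is finite, each orbit gives a finite regular bipartite graph, which admits a perfect matching by Hall's theorem, so I may assume $\Gamma$ is infinite. Then $G$ is hyperfinite (by amenability of $\Gamma$) and regular, and by Proposition \ref{equi} every component is almost everywhere one- or two-ended. If $G$ is one-ended a.e., then Corollary \ref{regular} directly produces the matching. Otherwise $\Gamma$ itself is two-ended, hence virtually $\mathbb{Z}$, and by the structure theorem cited in Section \ref{sec:notation}, $\Gamma$ has a maximal finite normal subgroup $\Delta_0$ with $\Gamma/\Delta_0\cong\mathbb{Z}$ or $\Gamma/\Delta_0\cong\zz$. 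In the first case the extension $1\to\Delta_0\to\Gamma\to\mathbb{Z}\to 1$ splits (as $\mathbb{Z}$ is free), so $\Gamma=\mathbb{Z}\ltimes\Delta_0$; the hypothesis of the second bullet then forces $|\Delta_0|$ to be even, and Lemma \ref{HxZ}(ii) applies. In the dihedral quotient case, I first observe that $\Gamma$ cannot be written as $\mathbb{Z}\ltimes\Delta$ for any finite normal $\Delta$, because any such $\Delta$ would lie in $\Delta_0$ and then $\zz$ would arise as a quotient of $\Gamma/\Delta\cong\mathbb{Z}$, which is impossible. I then build the matching by adapting the block-and-gap argument of Lemma \ref{HxZ}(ii) to this setting: any bipartite generating set of $\Gamma$ must contain an element $\sigma$ whose image in the dihedral quotient is a reflection, and combining $\sigma$ with the index-$2$ subgroup $\Gamma'=\mathbb{Z}\ltimes\Delta_0$ (the preimage of $\mathbb{Z}$ in the quotient) lets one construct blocks that admit perfect matchings and whose complements also admit perfect matchings, then conclude via Borel combinatorics as in the proof of Lemma \ref{HxZ}(ii).

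The main obstacle is the dihedral sub-case when $|\Delta_0|$ is odd, since $\Gamma'$ then falls under the first bullet and so its own Schreier graphing admits no measurable perfect matching. The essential ingredient is the reflection in $\Sigma\setminus\Gamma'$: it supplies the $\mathbb{Z}/2$-symmetry needed to align and glue matchings across the two $\Gamma'$-cosets, so that the block construction of Claim \ref{blocks} and Claim \ref{gaps} can be adapted to yield a Borel perfect matching of the full $\Gamma$-Schreier graphing.
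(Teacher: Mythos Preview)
Your handling of the first bullet and the one-ended and $\Gamma/\Delta_0\cong\mathbb{Z}$ sub-cases of the second bullet matches the paper's proof. The divergence is in the dihedral-quotient sub-case, and there the paper's argument is much shorter than your proposed adaptation of the block-and-gap construction.

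The paper observes (as you do) that the generating set $\Sigma$ must contain some $\sigma$ whose image in $\Gamma/\Delta_0\cong\zz$ lies outside the index-two copy of $\mathbb{Z}$, hence has order two. But then $\sigma^2\in\Delta_0$, so $\sigma$ itself has finite order; and since the Cayley graph is bipartite, that order must be even. The $\sigma$-orbits therefore partition the Schreier graphing into Borel families of even cycles, and taking alternate edges on each cycle gives a Borel perfect matching immediately. No block-and-gap machinery is needed, and the parity of $|\Delta_0|$ is irrelevant.

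Your route via Claims~\ref{blocks} and~\ref{gaps} is not obviously wrong, but it is substantially more work, and the sketch you give for the ``main obstacle'' (odd $|\Delta_0|$) is too vague to stand as a proof: the block construction in Lemma~\ref{HxZ}(ii) genuinely uses evenness of $|\Delta|$ in both cases of Claim~\ref{blocks}, and you have not said concretely how the extra reflection repairs this. The paper's even-cycle trick sidesteps the issue entirely.

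One minor remark: you invoke Proposition~\ref{equi} to conclude the components are one- or two-ended, but that proposition characterizes two-endedness via linear growth; the relevant fact here is simply that the ends of a Schreier graphing of a free action coincide with the ends of the Cayley graph, so the case split is really on the ends of $\Gamma$ itself.
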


\begin{proof}
If $\Gamma$ is amenable and one-ended then the existence of a measurable perfect matching follows from Corollary~\ref{regular}. 

Thus, we need to consider the two-ended case. Recall (see \cite[Theorem 5.12]{scott.wall}) that there exists a finite normal subgroup $\Delta$ such that $\Gamma/\Delta$ is either isomorphic to $\mathbb{Z}$ or to $\zz$. In the first case, $\Gamma$ is isomorphic to $\mathbb{Z}\ltimes\Delta$ for a finite group $\Delta$, and Lemma~\ref{HxZ} implies the theorem.

Now assume that $\Gamma/\Delta$ is isomorphic to $\zz$. Note that  the group $\zz$ has a normal subgroup of index two isomorphic to $\mathbb{Z}$, and the elements not in this subgroup have order two. Hence there exists a generator $\gamma$ defining the Cayley graph of $\Gamma$ whose image in $\Gamma/\Delta$ has order two, hence $\gamma$ has finite, even order. The Cayley graph of $\Gamma$ decomposes into $\gamma$-cycles of even length and the same is true for the graphing $G$, so it admits a measurable perfect matching.


\end{proof}

Now we prove Theorem \ref{fiid}.
\begin{proof}[Proof of Theorem \ref{fiid}]
If $\Gamma$ is non-amenable then the Lyons-Nazarov theorem \cite{lyons.nazarov} shows that there exists a factor of iid perfect matching. If $\Gamma$ is amenable, then the statement follows from Proposition \ref{new} since the Bernoulli shift is a.e. free and totally ergodic.

\end{proof}

\section{Measurable circle squaring}\label{sec:mcs}

Recall that the circle squaring
problem of Tarski \cite{Tarski.problem} asked if the unit disc and the unit square are equidecomposable. A positive answer was given by
Laczkovich \cite{lacz11} and recently Grabowski, M\'ath\'e and Pikhurko \cite{gmp} have given a measurable equidecomposition. We show a couple of different ways to deduce the measurable circle squaring from the main results of this paper. 

In applications to equidecomposition problems, we consider measurably bipartite graphs. We will use the following notation. Given two disjoint sets $A,B$ of a metric space write $G^k(A,B)$ for the bipartite graph whose vertices are $A\cup B$ and edges join two points, one from $A$ and one from $B$ at distance at most $k$. In particular, if $\Gamma$ is a finitely generated group and $d$ is the metric on its Cayley graph with respect to a finite generating set $\Sigma\subseteq\Gamma$, then for two disjoint subsets $A,B\subseteq \Gamma$ we denote the above bipartite graph by $G^k_\Sigma(A,B)$, 
Similarly, if $\Gamma\curvearrowright X$ is a free action, $\Sigma\subseteq\Gamma$ is a finite generating set and $A,B\subseteq X$ are disjoint, then we consider the bipartite graph (measurably bipartite if $A,B$ are measurable) $G^k_{\Gamma\curvearrowright X,
\Sigma}(A,B)$ whose vertices are $A\cup B$ and edges join two points in the same orbit if their distance in the Schreier graph is at most $k$.

We will be interested in the graph $G^k_{\mathbb{Z}^d\curvearrowright\mathbb{T}^n,\Sigma}(A,B)$ obtained from the action of $\mathbb{Z}^d$ on the torus $\mathbb{T}^n$ constructed by Laczkovich \cite{lacz11}, for a suitably chosen $k$, where $A,B\subseteq\mathbb{T}^n$ and $\Sigma$ is the standard set of generators of $\mathbb{Z}^d$.

\subsection{A proof of the measurable circle squaring using regular bipartite graphs}

In this proof we directly use our perfect matching theorem \ref{matching}. We show that the graph corresponding to the action of $\mathbb{Z}^d$ constructed by Laczkovich \cite{lacz11} is very close to  a 
regular graph. More precisely, using his construction, we consider a regular subgraph of the Schreier graphing of $\mathbb{Z}^d$. Even though that graph is not measurable, we use a sandwich-type argument to approximate it with measurable graphings and apply Theorem \ref{fmatching}.


Recall \cite{lacz.uniform} that a set $A\subseteq\mathbb{R}^d$ is \textit{uniformly spread with density $\alpha$} 
if 
there exists a bijection $h:A\to(\frac{1}{\sqrt[d]{\alpha}}\mathbb{Z})^d$ such that $\sup_{x\in A}|h(x)-x|<\infty$. Uniformly spread sets can be defined in many equivalent ways \cite[Theorem 1.1]{lacz.uniform}. In the proof of the circle squaring, Laczkovich constructs an action of $\mathbb{Z}^d$ so that both the unit disc and the unit square are uniformly spread on each orbit. More precisely, given a free pmp action $\mathbb{Z}^d\curvearrowright (X,\nu)$ for each $x\in X$ we identify the orbit $O_x$ of $x$ with $\mathbb{Z}^d$ in a canonical way. We say that a measurable set $A\subseteq X$ is \textit{uniformly spread with density $\alpha$} (with respect to the action $\mathbb{Z}^d\curvearrowright (X,\nu)$) if for each $x\in X$ there is a bijection $h_x$ between $A\cap O_x$ (identified with a subset of $\mathbb{R}^d$) and $\frac{1}{\alpha}\mathbb{Z}^d$ such that $\sup_{x\in X}\sup_{y\in O_x} |h_x(y)-y|<\infty$. The Laczkovich theorem (more precisely, the combination of the Laczkovich Bijection Lemma \cite[Lemma 9.8]{wagon} and \cite[Theorem 9.12, Claim 2]{wagon}) shows that the measurable sets in the circle squaring are uniformly spread with respect to an action of $\mathbb{Z}^d$.


Below, we write $\Sigma\subseteq\mathbb{Z}^d$ for the standard set of generators (the unit vectors). Given $A,B\subseteq\mathbb{Z}^d$ we write $G^k(A,B)$ for $G^k_\Sigma(A,B)$.
In the following proposition we construct a perfect fractional matching that is not necessarily measurable but it is defined on a measurable one-ended graphing and its support contains a measurable one-ended graphing.

\begin{prop}\label{propozycja}
Let $d>1,\alpha\in(0,1)$. Consider a free pmp action $\mathbb{Z}^d\curvearrowright (X,\nu)$ and suppose that $A,B\subseteq X$ are measurable and uniformly spread with density $\alpha$. 
Then there exist $k<l$ 
such that 
the graph $G^k(A,B)$ is one-ended and $G^l(A,B)$ admits a perfect fractional matching $\phi$ that is constant and positive 
on the edges of $G^k(A,B)$.

\end{prop}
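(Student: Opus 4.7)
The plan is to obtain $\phi$ by averaging a family of bounded-displacement perfect matchings arising from the uniformly spread structure. The uniformly spread hypothesis gives, on each orbit $O_x$, bijections $h^A_x\colon A \cap O_x \to \Lambda$ and $h^B_x\colon B \cap O_x \to \Lambda$, where $\Lambda$ denotes the scaled integer lattice, whose displacements are bounded by some uniform constant $M$. For every lattice vector $v \in \Lambda$ the map $\psi_v(a) := (h^B_x)^{-1}(h^A_x(a) + v)$ is then a perfect matching from $A \cap O_x$ onto $B \cap O_x$ whose edges have displacement at most $\|v\|+2M$.

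First I would pick $k$ large enough to make $G^k(A,B)$ one-ended on every orbit (see below), then set $l := k+4M$ and define
\[
\phi(a,b) = \frac{1}{|V|}\,\mathbf{1}\bigl[h^B_x(b)-h^A_x(a) \in V\bigr], \quad \text{where}\quad V := \{v \in \Lambda : \|v\| \le k+2M\}.
\]
This $\phi$ is the uniform average of the $|V|$ matchings $\psi_v$ for $v \in V$, so it is a perfect fractional matching. Every $\psi_v$ with $v \in V$ has displacement at most $k+4M = l$, so $\phi$ is supported on $G^l(A,B)$. Conversely, for any edge $(a,b)$ of $G^k(A,B)$ the triangle inequality gives $\|h^B_x(b) - h^A_x(a)\| \le k + 2M$, hence $h^B_x(b) - h^A_x(a) \in V$ and $\phi(a,b) = 1/|V|$ — a single positive constant on every edge of $G^k(A,B)$, as required.

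The main obstacle I anticipate is verifying one-endedness of $G^k(A,B)$ for $k$ sufficiently large. The intuition is that the uniformly spread hypothesis makes $A$ and $B$ bounded-displacement deformations of the lattice $\Lambda$ on each orbit; once $k$ exceeds $2M + C$ for a suitable constant $C$, any two lattice-adjacent vertices of $A$ are joinable by a two-edge path through $B$ in $G^k(A,B)$, and $G^k(A,B)$ becomes quasi-isometric to $\mathbb{Z}^d$ on each orbit. Since $d \ge 2$, $\mathbb{Z}^d$ is one-ended, and the number of ends is a quasi-isometry invariant, so $G^k(A,B)$ is one-ended on every orbit for such $k$. Alternatively, one may observe that $G^k(A,B)$ has polynomial growth of degree $d \ge 2$ and invoke Proposition \ref{equi}, which applies since Schreier graphings of $\mathbb{Z}^d$ are hyperfinite.
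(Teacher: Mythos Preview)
Your proof is correct and follows essentially the same approach as the paper: both use the bounded-displacement bijections to a lattice to produce a fractional matching that is constant on the smaller graph, and both establish one-endedness via quasi-isometry to $\mathbb{Z}^d$. Your formulation as an average of the translation matchings $\psi_v$ is exactly the paper's ``transport the constant $\frac{1}{r}$ matching from the regular lattice graph $G^{n_2}(A',B')$'' written out explicitly --- the paper uses two shifted copies of the lattice and the regularity of $G^m(A',B')$, while you use a single lattice and the indicator of $h^B_x(b)-h^A_x(a)\in V$, but these are the same construction with slightly different bookkeeping.
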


\begin{proof}
First, fix an orbit $O$ of the action of $\mathbb{Z}^d$.
Note that the embedding of $\mathbb{Z}^d$ into $\mathbb{R}^d$ is a quasi-isometry and on $\mathbb{Z}^d$ the natural Cayley graph metric is equivalent to the Euclidean metric (induced by this embedding). Below, slightly abusing the notation, for subsets $A',B'\subseteq \mathbb{R}^d$ we write $G^m(A',B')$ for the graph with edges $(a',b')$ ($a'\in A',b'\in B'$) if the Euclidean distance of $a'$ and $b'$ is at most $m$.  

There is a perfect matching between $A\cap O$ and $A'=\frac{1}{\alpha}\mathbb{Z}^d$, and between $B\cap O$ and  $B'=\frac{1}{\alpha}\mathbb{Z}^d+(\frac{\alpha}{2},\ldots,\frac{\alpha}{2})$. Both matchings match each vertex $x$ to a vertex $y$ such that $|x-y|$ is bounded by a constant, since we assume that $A\cap O$ and $B\cap O$ are uniformly spread.

Note that the graph $G^m(A',B')$ is regular for every $m$. The matching between $A\cap O$ and $A'$ allows us to identify the elements of $A$ and $A'$, and similarly, the matching between $B\cap O$ and $B'$ allows us to identify the elements of $B$ and $B'$. We obtain using this identification that for every $m$ there exists $m^+>m$ such that
all edges in $G^m(A,B)$ are still edges in $G^{m^+}(A',B')$, and similarly, all edges in $G^m(A',B')$ are still edges in $G^{m^+}(A,B)$.

Choose $n$ large enough 
such that $G^{n}(A',B')$ is one-ended. Put $n_1=n^+$, $n_2=n_1{}^+$, 
$n_3=n_2{}^+$. 

Note that the graph $G^{n_2}(A',B')$ is regular and let $r$ be its degree.
Now, we can transport the constant $\frac{1}{r}$ fractional matching on $G^{n_2}(A',B')$ to $G^{n_3}(A,B)$ using the identification of $A$ with $A'$ and $B$ with $B'$ (i.e. we put $\frac{1}{r}$ on every edge between $a\in A\cap O$ and $b\in B\cap O$ if $a$ is identified with $a'\in A'$, $b$ is identified with $b'\in B'$ and there is an edge between $a'$ and $b'$). Note that it is equal to $\frac{1}{r}$ on all edges from $G^{n_1}(A,B)$. $G^{n_1}(A,B)$ is also one-ended, as it contains a subgraph quasi-isometric to $G^n(A',B')$ via the identification of $A$ with $A'$ and $B$ with $B'$. Finally, put $k=n_1$, $l=n_3$. 

Note that since we do not require that the perfect fractional matching is measurable, we can define it separately on each orbit $O$. However, the choice of $k$ and $l$ does not depend on the choice of the orbit $O$ since we assumed that $A$ and $B$ are uniformly spread with respect to the action $\mathbb{Z}^d\curvearrowright (X,\nu)$.


\end{proof}

\begin{theorem}\label{mcs1}
Let $d>1,\alpha\in(0,1)$. Consider a free pmp action $\mathbb{Z}^d\curvearrowright (X,\nu)$ and $A,B\subseteq X$ measurable and uniformly spread with density $\alpha$. 
Then $A$ and $B$ are equidecomposable using measurable pieces.
\end{theorem}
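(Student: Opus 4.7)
The plan is to apply Theorem~\ref{matching} to a suitable one-ended hyperfinite subgraphing of $G^l(A,B)$, extract a measurable perfect matching, and then read off the equidecomposition from the bounded $\mathbb{Z}^d$-translation labels on its edges.

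First, I would invoke Proposition~\ref{propozycja} to obtain $k<l$ and a constant $1/r>0$ such that $G^k(A,B)$ is one-ended and $G^l(A,B)$ admits a (not a priori measurable) perfect fractional matching $\phi$ with $\phi \equiv 1/r$ on $E(G^k(A,B))$. Since $G^l(A,B)$ is bipartite (with parts $A$ and $B$) and hyperfinite (as $\mathbb{Z}^d$ is amenable), I would then apply Lemma~\ref{lemat} with $f \equiv 1$, $H = G^k(A,B)$, and bounds $a(e) = b(e) = 1/r$ on $E(G^k(A,B))$. Since $\phi$ witnesses the hypothesis of that lemma, it produces a measurable perfect fractional matching $\phi'$ on $G^l(A,B)$ that equals $1/r$ on every edge of $G^k(A,B)$.

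Next, I would let $H$ be the measurable subgraphing of $G^l(A,B)$ with edge set $\{e : \phi'(e)>0\}$. Because $\phi'$ is a perfect fractional matching, every vertex of $A \cup B$ is incident to at least one edge of $H$, and by construction $E(G^k(A,B)) \subseteq E(H) \subseteq E(G^l(A,B))$. The uniform-spread hypothesis ensures that $A \cup B$ is connected in $G^k(A,B)$ on each orbit (for the $k$ supplied by Proposition~\ref{propozycja}, as implicit in its proof), so $G^k(A,B)$, $H$, and $G^l(A,B)$ share the same connected components; since $G^k(A,B)$ is one-ended on each of these and adding edges within a one-ended component preserves this property, $H$ is a hyperfinite one-ended bipartite graphing. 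The restriction $\phi'|_{H}$ is by construction an everywhere positive measurable perfect fractional matching on $H$, so Theorem~\ref{matching} yields a measurable perfect matching $M$ of $H$, which is simultaneously a measurable perfect matching of $G^l(A,B)$. Finally, every edge of $G^l(A,B)$ is labelled by one of finitely many translations $g \in \mathbb{Z}^d$ with $|g|_{\Sigma} \leq l$, so partitioning $M$ by this label produces the desired measurable equidecomposition of $A$ and $B$.

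I expect the main technical point to be verifying the one-endedness of $H$, which reduces to showing that the connected components of $G^k(A,B)$, $H$, and $G^l(A,B)$ coincide; this is ultimately where the uniform-spread hypothesis plays its crucial role, by forcing $G^k(A,B)$ to be connected on each orbit for the $k$ provided by Proposition~\ref{propozycja}.
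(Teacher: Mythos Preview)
Your proposal is correct and follows essentially the same route as the paper: invoke Proposition~\ref{propozycja}, upgrade the fractional matching to a measurable one via Lemma~\ref{lemat} with the constraint $a=b=1/r$ on $E(G^k(A,B))$, and then apply the main matching theorem. The only cosmetic difference is that you restrict to the support $H$ and invoke Theorem~\ref{matching}, whereas the paper applies the more general Theorem~\ref{fmatching} directly to $G^l(A,B)$; since Theorem~\ref{fmatching} only requires $\supp(\phi')$ to be nowhere two-ended, your explicit verification that $G^k(A,B)$, $H$, and $G^l(A,B)$ share components (and hence $H$ is one-ended) is exactly the step the paper leaves implicit.
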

\begin{proof}
 By Proposition \ref{propozycja}, there exist $k<l$ such that writing $H=G^k_{\mathbb{Z}^d\curvearrowright X,\Sigma}(A,B)$ and $G=G^l_{\mathbb{Z}^d\curvearrowright X,\Sigma}(A,B)$, the graphing $H$ is one-ended and on $G$ there exists a perfect fractional matching between $A$ and $B$ such that on the edges of $H$, it is constant and positive. By Lemma \ref{lemat} and Theorem \ref{fmatching} the graphing $G$ admits a measurable perfect matching, so we are done.
\end{proof}

\begin{cor}[Measurable circle squaring]
  Let $A,B\subseteq \mathbb{T}^n$ be such that $\nu(A)=\nu(B)>0$ and $\dim(\partial A),\dim(\partial B)<n$. Then $A$ and $B$ are measurably equidecomposable.
\end{cor}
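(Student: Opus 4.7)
The plan is to reduce the statement to Theorem \ref{mcs1} by building a suitable $\mathbb{Z}^d$-action on $\mathbb{T}^n$ in which both $A$ and $B$ become uniformly spread. First I would invoke Laczkovich's theorem on uniformly spread sets: for measurable $A,B\subseteq \mathbb{T}^n$ with $\nu(A)=\nu(B)>0$ and $\dim(\partial A),\dim(\partial B)<n$, there exist an integer $d$ and vectors $v_1,\ldots,v_d\in\mathbb{R}^n$ linearly independent over $\mathbb{Q}$ (together with a rational basis of $\mathbb{R}^n$) such that the induced translation action $\mathbb{Z}^d\curvearrowright \mathbb{T}^n$ is free and probability measure preserving, and both $A$ and $B$ are uniformly spread with density $\alpha=\nu(A)$ with respect to this action. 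This is the step that uses the boundary-dimension hypothesis, and it relies on Laczkovich's characterization of uniformly spread sets (not on his quantitative discrepancy estimates).

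Having fixed such an action and the standard generating set $\Sigma$ of $\mathbb{Z}^d$, I would apply Theorem \ref{mcs1} directly to $A$ and $B$. This yields $k<l$ and a measurable perfect matching $M$ in the Schreier bipartite graphing $G^l_{\mathbb{Z}^d\curvearrowright\mathbb{T}^n,\Sigma}(A,B)$. Since every edge of $M$ has Schreier length at most $l$, only finitely many group elements $\sigma\in\mathbb{Z}^d$ with $\|\sigma\|_\Sigma\le l$ occur. For each such $\sigma$ let
\[
A_\sigma=\{a\in A:(a,\sigma\cdot a)\in M\}.
\]
Then $\{A_\sigma\}$ is a measurable finite partition of $A$, its translates $\{\sigma\cdot A_\sigma\}$ form a measurable partition of $B$, and each piece is moved by a translation of $\mathbb{T}^n$ from the finitely generated group $\langle v_1,\ldots,v_d\rangle$. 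This is precisely a measurable equidecomposition of $A$ and $B$.

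The main obstacle is the uniform-spread input: showing that for any two measurable subsets of $\mathbb{T}^n$ of equal measure and sufficiently tame boundary one can find a common free pmp $\mathbb{Z}^d$-action of translations making both uniformly spread. This is the substantive geometric ingredient and is used here as a black box from Laczkovich. Once it is in hand, the heavy lifting of the present paper — Theorem \ref{fmatching} and its consequence Theorem \ref{mcs1} — produces the measurable matching, and the rest is routine Borel bookkeeping.
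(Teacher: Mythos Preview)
Your proposal is correct and follows essentially the same approach as the paper: invoke Laczkovich's results to obtain a free pmp $\mathbb{Z}^d$-action on $\mathbb{T}^n$ in which both $A$ and $B$ are uniformly spread, then apply Theorem~\ref{mcs1}. The paper's proof is the one-line version of exactly this, citing \cite[Theorem 9.12, Claim 2]{wagon} and \cite[Lemma 9.8]{wagon} for the uniform-spread input; your extra paragraph unpacking the matching into a finite measurable partition is routine and is already absorbed into the statement of Theorem~\ref{mcs1}.
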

\begin{proof}
By \cite[Theorem 9.12, Claim 2]{wagon} and \cite[Lemma 9.8]{wagon}, there exists $d>1$, $\alpha>0$ and a free pmp action $\mathbb{Z}^d\curvearrowright\mathbb{T}^n$ such that 
the sets $A$ and $B$ are uniformly spread with density $\alpha$. We are done by Theorem \ref{mcs1}.
\end{proof}

\subsection{A proof of the measurable circle squaring using two independent equidecompositions}

In this proof of the measurable circle squaring we use Theorem \ref{fmatching} and the fact that circle squaring is possible by a large set of random translations (with high probability), as proved by Laczkovich.

\begin{prop}\label{circlesquaring}
Let $\Gamma$ be an amenable group, $\Gamma_1, \Gamma_2 \leq \Gamma$ be its subgroups and $\Gamma\curvearrowright X$ be a probability measure preserving free action of $\Gamma$ on a probability measure space $X$. Assume that for every $\gamma \in \Gamma$ the intersection $\Gamma_1 \cap \gamma^{-1} \Gamma_2 \gamma$ is finite. Let $A, B \subseteq X$ be measurable and $\mu(A),\mu(B)>0$. If $A$ and $B$ admit an equidecomposition both by $\Gamma_1$ and $\Gamma_2$ then they admit a measurable $\Gamma$-equidecomposition.
\end{prop}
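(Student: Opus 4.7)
The plan is to translate the two (not necessarily measurable) equidecompositions into a perfect fractional matching on a bipartite Schreier graphing on $A \cup B$, upgrade it to a measurable one via Lemma \ref{lemat}, and then round it to a measurable perfect matching via Theorem \ref{fmatching}; the resulting matching is precisely a measurable $\Gamma$-equidecomposition of $A$ and $B$.

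Concretely, I would first pick finite symmetric sets $S_i \subseteq \Gamma_i$ realizing the two equidecompositions, producing (non-measurable) perfect matchings $M_1, M_2$ between $A$ and $B$ whose edges are of the form $(a,\gamma a)$ with $\gamma \in S_i$. Let $G$ be the bipartite Borel graphing on $A \cup B$ with edges $\{(a,\gamma a) : \gamma \in S_1 \cup S_2\}$. The graphing $G$ is locally finite and, by amenability of $\Gamma$ and Connes--Feldman--Weiss, hyperfinite. The function $\tau_0 = \frac{1}{2}(M_1 + M_2)$ is a perfect fractional matching on $G$ with values in $\{0, \frac{1}{2}, 1\}$, so by Lemma \ref{lemat} with the trivial bounds $0 \leq b \equiv 1$ there is a measurable perfect fractional matching $\tau$ on $G$.

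To apply Theorem \ref{fmatching} we must ensure $\supp(\tau)$ is a.e. nowhere two-ended. The naive choice $\tau = \tau_0$ fails, as $\supp(\tau_0) = M_1 \triangle M_2$ decomposes into alternating cycles and bi-infinite paths and is two-ended. My plan is to enlarge $S_1 \cup S_2$ if necessary and exploit the hypothesis that $\Gamma_1 \cap \gamma^{-1}\Gamma_2\gamma$ is finite for every $\gamma \in \Gamma$ to show that $G$ itself is a.e. one-ended: if two alternating products $g_1 h_1 g_2 h_2 \cdots g_n h_n$ and $g_1' h_1' \cdots g_n' h_n'$ with $g_i, g_i' \in S_1$ and $h_i, h_i' \in S_2$ coincide in $\Gamma$, then a nontrivial element must lie in some $\Gamma_1 \cap \gamma^{-1}\Gamma_2 \gamma$, which is finite by hypothesis. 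This yields that the number of distinct length-$2n$ alternating products grows superlinearly in $n$, so $G$ has superlinear growth on a.e. $\Gamma$-orbit, and Proposition \ref{equi} then gives a.e. one-endedness of $G$.

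With $G$ one-ended, I would arrange that $\supp(\tau)$ equals $G$ a.e. by averaging the output of Lemma \ref{lemat} with a second measurable perfect fractional matching $\rho$ that is strictly positive on $G$; such a $\rho$ arises by applying Lemma \ref{lemat} again, using a strictly positive fractional witness (which exists in the enlarged $G$ once we pass to an approximately regular subgraphing and take the uniform $\frac{1}{d}$-matching). Theorem \ref{fmatching} applied to the averaged matching then produces a measurable perfect matching of $G$, which is precisely the sought measurable $\Gamma$-equidecomposition. The main obstacle throughout is the superlinear-growth argument of the previous paragraph: converting the purely algebraic finite-intersection hypothesis into quantitative lower bounds on ball sizes in $G$, with particular care in the edge cases where one of $\Gamma_1$ or $\Gamma_2$ is finite.
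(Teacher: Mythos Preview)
Your overall framework---build a bipartite graphing on $A\cup B$, upgrade the fractional matching to a measurable one via Lemma~\ref{lemat}, then round via Theorem~\ref{fmatching}---is exactly the paper's. The gap is in the one-endedness step.

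The alternating-product argument does not work. Take $\Gamma=\mathbb{Z}^2$ with $\Gamma_1=\mathbb{Z}\times\{0\}$ and $\Gamma_2=\{0\}\times\mathbb{Z}$: every intersection $\Gamma_1\cap\gamma^{-1}\Gamma_2\gamma$ is trivial, yet alternating words collapse freely (e.g.\ $e_1+e_2-e_1-e_2=0$) without any nontrivial element landing in such an intersection. The hypothesis is much weaker than a free-product normal form; it only says that a fixed $\Gamma_1$-orbit meets a fixed translate of a $\Gamma_2$-orbit in finitely many points. More seriously, $G$ need not be one-ended at all: if each equidecomposition happens to be realized by a single translation (say $B=A+v$ with $v\in\Gamma_1$ and $B=A+w$ with $w\in\Gamma_2$, which is compatible with the hypotheses), then $G$ is $2$-regular and its infinite components are bi-infinite lines. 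Enlarging $S_1\cup S_2$ does not help, since additional elements of $\Gamma_i$ need not carry points of $A$ into $B$. Your construction of a strictly positive $\rho$ via an ``approximately regular subgraphing'' is likewise unsupported: $G$ is not regular and no uniformly positive fractional matching need exist on all of $G$.

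The paper proceeds differently. It applies Lemma~\ref{lemat} \emph{separately} to the two graphings $G_i$ (built from $\Gamma_i$-edges only), obtaining measurable fractional matchings $\tau_i$; then sets $H_i=\{e:\tau_i(e)>0\}$, $H=H_1\cup H_2$, and $\tau=\tfrac12\tau_1+\tfrac12\tau_2$, which is automatically positive on all of $H$. It then does a three-way split on $H$-components. Components containing infinitely many infinite $H_i$-components (for some $i$) have superlinear growth, hence are one-ended, and Theorem~\ref{fmatching} applies there. Components whose $H_i$-pieces are all finite for some $i$ are matched directly inside those finite pieces (finite bipartite graph with a fractional perfect matching). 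The remaining components---at least one but only finitely many infinite $H_i$-components for \emph{both} $i$---form a nullset, and this is the only place the finite-intersection hypothesis enters: if $x,y$ lie in the same $H_1$-component and $\gamma x,\gamma y$ in the same $H_2$-component, then writing $y=g_1x$ and $\gamma y=g_2\gamma x$ with $g_i\in\Gamma_i$ forces $g_1\in\Gamma_1\cap\gamma^{-1}\Gamma_2\gamma$, so only finitely many such $y$ exist for each $x$ and $\gamma$.
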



\begin{proof}
We may assume that the groups are finitely generated and $\Gamma$ is the subgroup generated by $\Gamma_1 \cup \Gamma_2$. Let $\Sigma_i$ denote the set of generators of $\Gamma_i$ for $i=1,2$.
Let $k$ be such that $A$ and $B$ are equidecomposable using elements of $\Gamma_i$ of length at most $k$ with respect to $\Sigma_i$. For $i=1,2$ consider the bipartite graphings $G_i=G^k_{\Gamma_i\curvearrowright X,\Sigma_i}(A,B)$, each with vertex set $A \cup B$. 
We apply Lemma \ref{lemat} to the graphings $G_i$ (with the subgraphs $H_i$ being empty) to get measurable perfect fractional matchings $\tau_1$ and $\tau_2$ of $A$ and $B$ in both graphings $G_1$ and $G_2$ respectively. 

Now, let $H_i$ be the spanning subgraph of $G_i$ that consists of the edges where $\tau_i$ is positive. 
And let $H$ denote the spanning subgraph with edge set $E(H)=E(H_1) \cup E(H_2)$. We will prove that $A$ and $B$ admit a measurable perfect matching in $H$. Note that $\tau=\frac{1}{2}\tau_1+\frac{1}{2}\tau_2$ is a measurable perfect fractional matching which is positive on all edges in $H$. 

First, for $i=1,2$ we show that the $H$-components that contain infinitely many infinite $H_i$-components are one-ended. By Proposition~\ref{equi} it suffices to show that the growth of the balls in these components is superlinear. Given a vertex $x$ and $C>0$ if the radius $r$ is large enough then $B(x,r)$ intersects more than $C$ infinite $H_i$-components, 
hence $|B(x,r+1) \setminus B(x,r)|>C$. Since this holds for every $C$ these components have superlinear growth. We can apply Theorem~\ref{fmatching} to the union of these $H$-components and $\tau$ in order to get a measurable perfect matching.

Next, for $i=1,2$ consider the union of the $H$-components that do not contain an infinite $H_i$-component. These have only finite $H_i$-components with a measurable perfect fractional matching, hence they admit a measurable perfect matching. 

Finally, we consider the union of the $H$-components such that for both $i=1$ and $i=2$ they contain infinite $H_i$-components, but only finitely many of them.
We show that this union is a nullset.
Consider the set of vertices $S$ in the union of these components that are in infinite $H_1$-components and their distance from infinite 
$H_2$-components is minimal in the component. The set $S$ is measurable. For every $\gamma \in \Gamma, H_1$-component $C_1$ and $H_2$-component $C_2$ 
the set of vertices $x \in H_1$ such that $\gamma x \in H_2$ is a finite set. Hence $S$ intersects every such infinite $H$-component in a finite set. Since the set $S$ is contained by the union of such infinite $H$-components, $S$ is a nullset. Hence the union of these $H$-components is also a nullset. This ends the proof.
\end{proof}

This gives an alternative proof for the measurable squaring of the disc using Laczkovich's result \cite{lacz2}.
In particular, this shows that if $d$ random vectors suffice for the circle squaring then $2d$ random vectors suffice for the measurable circle squaring.

\begin{cor}[Measurable circle squaring]\label{mcsnumber}
  Let $A,B\subseteq \mathbb{T}^n$ be measurable sets that are equidecomposable by a random set of $d$ vectors chosen uniformly and independently in $\mathbb{T}^n$ with positive probability. 
  Then $A$ and $B$ are measurably equidecomposable by a random set of $2d$ vectors chosen uniformly and independently in $\mathbb{T}^n$ with positive probability. 
\end{cor}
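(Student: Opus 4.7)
The plan is a direct application of Proposition~\ref{circlesquaring} to the group generated by two independent sets of $d$ random translations. Choose $2d$ vectors $v_1,\dots,v_d,w_1,\dots,w_d$ independently and uniformly from $\mathbb{T}^n$, and let $\Gamma_1=\langle v_1,\dots,v_d\rangle$, $\Gamma_2=\langle w_1,\dots,w_d\rangle$, and $\Gamma=\langle\Gamma_1,\Gamma_2\rangle\leq\mathbb{T}^n$. By hypothesis, the event that $A,B$ are $\Gamma_1$-equidecomposable has positive probability, and independently the event that $A,B$ are $\Gamma_2$-equidecomposable has positive probability; since these two events depend on disjoint sets of coordinates, their intersection has positive probability.

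Next I would verify the hypotheses of Proposition~\ref{circlesquaring}. The group $\Gamma$ is a subgroup of $\mathbb{T}^n$, hence abelian and so amenable, and the translation action $\Gamma\curvearrowright\mathbb{T}^n$ is probability measure preserving. A standard Fubini argument shows that almost surely the $2d$ vectors are $\mathbb{Z}$-linearly independent in $\mathbb{T}^n$: for any fixed nonzero $(a_1,\dots,a_{2d})\in\mathbb{Z}^{2d}$, the set where $\sum a_i v_i=0$ in $\mathbb{T}^n$ has measure zero (fix all but one coordinate with a nonzero coefficient and note that the remaining linear condition defines a null set), and there are countably many such tuples. Consequently, with probability one, $\Gamma\cong\mathbb{Z}^{2d}$, the action of $\Gamma$ on $\mathbb{T}^n$ is free, and $\Gamma_1\cap\Gamma_2=\{0\}$. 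Because $\Gamma$ is abelian, $\gamma^{-1}\Gamma_2\gamma=\Gamma_2$ for every $\gamma\in\Gamma$, so the intersection condition $|\Gamma_1\cap\gamma^{-1}\Gamma_2\gamma|<\infty$ of Proposition~\ref{circlesquaring} is satisfied (in fact the intersection is trivial).

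Combining the two paragraphs, with positive probability the $2d$ chosen vectors simultaneously satisfy (a) $\Gamma_1,\Gamma_2$ each yield an equidecomposition of $A$ and $B$, and (b) the group-theoretic hypotheses of Proposition~\ref{circlesquaring} hold. On this event Proposition~\ref{circlesquaring} produces a measurable $\Gamma$-equidecomposition of $A$ and $B$, whose translations are integer combinations of the $2d$ vectors. This gives the desired measurable equidecomposition with positive probability. The only point that requires any care is the verification that $\Gamma$ is torsion-free and acts freely, but this follows from rational independence of uniform random vectors, so no real obstacle arises.
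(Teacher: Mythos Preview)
Your proof is correct and follows essentially the same approach as the paper's: split the $2d$ random vectors into two blocks of $d$, use independence to get both equidecompositions with positive probability, and apply Proposition~\ref{circlesquaring}. You have simply spelled out the verifications (amenability, a.s.\ $\mathbb{Z}$-linear independence giving freeness and $\Gamma_1\cap\Gamma_2=\{0\}$, and abelianness making the conjugate-intersection condition trivial) that the paper leaves implicit.
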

\begin{proof}
We apply Proposition \ref{circlesquaring} with the following choice. Laczkovich proved \cite{lacz2} that if $d$ is large enough then for the random set $u_1,\ldots,u_d$ of $d$ vectors chosen uniformly and independently in $\mathbb{T}^n$ with positive probability, the unit square and the unit disc are equidecomposable via the action of $\mathbb{Z}^d$ on the torus. We choose $2d$ random vectors $u_1,\ldots,u_{2d}$ and consider the corresponding action of $\Gamma=\mathbb{Z}^{2d}$, where we consider $\Gamma_1$ generated by the first $d$ generators of $\Gamma$ and $\Gamma_2$ generated by the last $d$ generators of $\Gamma$.
\end{proof}


\section{Appendix}
\label{sec:appendix}

\begin{proof}[Proof of Proposition \ref{equi}]
\mbox{}

\textbf{(i) $\rightarrow$ (ii)}. Assume that $G$ admits linear growth a.e.
Recall that every graphing has a.e. zero, one, two or infinite ends \cite[Theorem 5.2]{adams}. Components with zero ends are finite, and so the union of these components is a nullset, 
since $G$ has a.e. linear growth. 
We need to show that the components which are not two-ended also form a nullset. By \cite[Theorem 2.1]{cgmtd}, 
 there exists a one-ended spanning subforest in the union of not two-ended components.
 
 Note that this subforest also has linear growth and spans the same set of vertices, so we may in fact assume that every one-ended component is a one-ended tree directed towards the end. 


Let $S_0$ be the set of leaves and $S_k$ denote the set of vertices such that the finite subtree cut by those vertices has height $k$. Given $C>0$ let $R_C$ denote the following set of vertices: a vertex $x$ is in $R_C$ if there exists $k$ and  $y \in S_k$ such that there is a path directed towards the end from $x$ to $y$ and the number of vertices that are below $y$ is at least $Ck$. We will show that $\nu(R_C)=1$ for every $C$. This is sufficient, since then the vertices in $\bigcap_{C=1}^{\infty} R_c$ have superlinear growth, and this set is non-empty, moreover, conull. Suppose for a contradiction that there exists a $C$ such that $\nu(R_C)<1$. The induced subgraphing $H$ on $V(G) \setminus R_C$ inherits the orientation towards the one end. For every vertex $x \in S_k \cap V(H) $ the number of vertices below $x$ is at most $Ck$.

Let $j$ be such that $\bigcup_{i=0}^j S_i$ covers at least half of the vertices in $V(H)$. 
For every $k$ let $I_k$ be the set of leaves of $(V(H)\setminus \bigcup_{i=0}^j S_i) \cap \bigcup_{j=k}^{\infty} S_j$. 
Note that for each $k$ the set $I_k$ is measurable, 
 the sets below the vertices of $I_k$ are disjoint and cover at least half of the vertices of $H$. Also, if 
$x \in I_k$, then there exists a path directed towards the end of length at least $k$ ending at $x$, and the number of vertices below $x$ is at most $Ck$. We can conclude that for every $k$ a positive, at least $\frac{1}{6C}$ proportion of the vertices are in the middle third of a path of length at least $k$ directed towards the end. That is, if we write $R_k$ for the set of the vertices that cut the tree into two components, each of which has at least $k$ elements, then $\nu(R_k)\geq\frac{1}{6C}\nu(V(H))$ for each $k$. If a vertex belongs to infinitely many of the sets $R_k$, the removal of such a vertex splits its component into at least two infinite components, hence it was not one-ended, a contradiction.

\smallskip
\textbf{(ii) $\rightarrow$ (iii)}. We may assume that every component of $G$ is two-ended. We say that a subset $B$ of a component is \textit{bi-infinite} if for every finite subset of the component, $B$ intersects each of the infinite components of the complement of that finite subset. By a \textit{cutset} we mean a finite subset of a component of $G$ whose complement in that component has exactly two infinite components.
In every component, there exists a finite cutset, as the components are two-ended. By possibly gluing such cutsets with the finite components of their complement, we get that in every component, there exists a finite, connected cutset. 
Let $A_n$ denote the union of the components that contain a connected cutset with $n$ vertices
and let $\mathcal{D}_n$ denote the collection of connected cutsets. 
Note that in every component contained in $A_n$ the set $\bigcup\mathcal{D}_n$ is nonempty and in a.e. component contained in $A_n$ the set $\bigcup\mathcal{D}_n$ is bi-infinite, since the components in which it is not bi-infinite admit a selector and thus form a smooth subset, which must be a null set. 
Using \cite[Proposition 4.6]{kst}, we can choose a Borel subset $\mathcal{D}_n' \subseteq \mathcal{D}_n$ that intersects almost every component of $A_n$ in a bi-infinite set and any two elements of $I_n$ are non-adjacent. 

Given a collection $\mathcal{I}$ of cutsets in a component of $G$ such that $\bigcup\mathcal{I}$ is bi-infinite, by a \textit{break} in $\mathcal{I}$ we mean the union of the finite connected components of the complement of $\bigcup\mathcal{I}$ which are adjacent to the same two consecutive elements of $\mathcal{I}$. Note that every cutset in $\mathcal{I}$ is adjacent to exactly two breaks and every component of a break is adjacent to exactly two cutsets in $\mathcal{I}$. 

 We claim that there exists a Borel $\mathcal{C}_n\subseteq\mathcal{D}_n'$ such that $\bigcup\mathcal{C}_n$ is still bi-infinite in every component and the breaks in $\mathcal{C}_n$
 are connected. Indeed, using \cite[Proposition 4.6]{kst}, we can find a Borel refinement $\mathcal{C}_n$ of $\mathcal{D}_n'$ such that $\bigcup\mathcal{C}_n$ is bi-infinite in every component and between every two consecutive elements of $\mathcal{C}_n$ there are either one or two elements of $\mathcal{D}_n'$. We claim that the breaks in $\mathcal{C}_n$ are now connected. Indeed, any two points in such a break $B$ can be connected by a path to any of the cutsets of $\mathcal{D}_n'$ that is contained in $B$. Thus, connectedness of $B$ follows from connectedness of the cutsets in $\mathcal{D}_n'$.

The sets $A_n$ and $\mathcal{C}_n$ are as needed.



\smallskip

\textbf{(iii) $\rightarrow$ (i)}. Consider $n>0$ and the set $A_n$. It suffices to show that $A_n$ has linear growth a.e., for every $n$. 
Consider the measurably bipartite graphing $H_n$ on $V(G)$, where two vertices are adjacent if one is in $\bigcup\mathcal{C}_n$ and the other is in an adjacent component of $A_n\setminus\bigcup\mathcal{C}_n$. It is enough to show that $H_n$ has linear growth. By an \textit{interval} we mean a finite union of consecutive components of $A_n\setminus\bigcup\mathcal{C}_n$. The \textit{length} of an interval $I$, denoted by $l(I)$ is the number of consecutive components of $A_n\setminus\bigcup\mathcal{C}_n$ contained in $I$.  A cutset $C$ is \textit{in the middle of} an interval $I$ if $C$ is adjacent to two components of the interval $I$. Consider a $C>0$ and the family of intervals $$\mathcal{I}_C=\{I: |I|>C\cdot l(I) \}.$$ 

Given $L>0$ consider the subfamily of $\mathcal{I}_C$ consisting of intervals of length at most $L$. It admits a measurable subfamily $\mathcal{I}^L_C$ that covers the same set of vertices, but every vertex at most twice. Indeed, we can construct it using \cite[Proposition 4.5]{kst}, by taking the subfamily of maximal intervals by containment, and iteratively removing an interval if it is contained by the union of two other intervals. 
If a cutset is in the middle of an interval in $\mathcal{I}^L_C$, then there can be in the middle of at most two such intervals. Hence the measure of the vertices in $\bigcup\mathcal{C}_n$ that are in the middle of an interval in
$\mathcal{I}^L_C$  
is at most $\frac{2n}{C}$. Write $M_C$ for the set of vertices in the cutsets that are in the middle of an interval in $\mathcal{I}_C$. As the sets $\mathcal{I}^L_C$ are increasing as $L$ increases, we get that $\nu(M_C)\leq\frac{2n}{C}$. Since this holds for every $C$, 
the measure of $\bigcap_{C>0}M_C$ is zero and so is the measure of the union of components that contain a cutset in $\bigcap_{C>0}M_C$. This implies that for a.e. vertex in 
$\bigcup\mathcal{C}_n$ the balls centered around it have linear growth. Since $\bigcup\mathcal{C}_n$ 
intersects a.e. component of $A_n$, the vertices in $A_n$ have linear growth a.e. for every $n$, so $G$ has a.e. linear growth.

\end{proof}

\begin{proof}[Proof of Lemma \ref{locfin}]
We will first construct a subgraphing $H'$ such that a.e. $H'$ has infinite components, $H'$ is a.e. not two-ended and $\mu(H')$ is finite. 

To this end, we first build a sequence $E_0 \subseteq E_1 \subseteq \ldots \subseteq E(G)$ such that the subgraphing spanned by the edges of $U=\bigcup_{i=1}^{\infty} E_i$ is a.e. locally finite and has infinite components only. Along the way, we make sure that the finite components of the graphing
spanned by $E_i$ have size at least $2^i$. Start with $E_0=\emptyset$. Recall that a set of vertices in a graph is \textit{$k$-discrete} if the distance between any of its two distinct elements is at least $k$. Given $E_i$ add for every component an edge connecting it to another component in such a way that the starting vertices of these edges intersect every finite component of $E_i$ in at most one vertex and in the infinite components of $E_i$ the starting vertices of these edges form a $2^i$-discrete set in $E_i$.  This can be done since such edges exist and the graph spanned by $E_i$ is locally finite, so $E_i$ has a maximal Borel $2^i$-discrete set by \cite[Proposition 4.5]{kst}. Let $E_{i+1}$ be this set of new edges together with $E_i$. 
Since $E_i$ 
has components of size at least $2^i$, the finite components of $E_{i+1}$ are of size at least $2^{i+1}$. Since the graph $E_i$ is pmp, the measure of the starting vertices of the new edges is at most $2^{-i}$. Consequently, we have $\mu(E_{i+1} \setminus E_i) \leq 2^{-i}$, and $\mu(U) \leq 1$. 

Next, we construct $U=U_0 \subseteq U_1\subseteq \ldots \subseteq E(G)$ such that every two-ended component of $U_i$ contains at least $2^i$ components of $U$. Given $U_i$, add for every two-ended component of $U_i$ a Borel set of edges connecting it to another component such that the starting vertices of these edges chosen in a component form a $2^i$-discrete set in $U_i$. Such new edges exist because $G$ is a.e. not two-ended, and we can find them in a Borel way because the graph spanned by $U_i$ is locally finite. Let $U_{i+1}$ be the set of these new edges together with $U_i$. Note that the set of starting vertices of the new edges has measure at most $2^{-i}$, as the graph is pmp, and this implies that $\mu(U_{i+1} \setminus U_i) \leq 2^{-i}$. 

Now, consider the union $H'=\bigcup_{i=0}^\infty U_i$.  We have $\mu(H')\leq 2$ and thus $H'$ is a.e. locally finite. Since every component of $H'$ contains infinitely many components of $U$, the graphing $H'$ has superlinear growth. Proposition \ref{equi} implies that $H'$ is a.e. not two-ended. 

Finally, by \cite[Theorem 2.1]{cgmtd}, $H'$ admits a one-ended spanning subforest $H$ and the subgraphing $H$ is then hyperfinite. 

\end{proof}


\bibliographystyle{amsalpha} 
\bibliography{1} 

\end{document}